\documentclass[fleqn,12pt]{article}
\usepackage{amsmath,amssymb,amsthm,framed,esint,xcolor,dsfont}
\usepackage[english]{babel}
\usepackage[margin=3cm]{geometry}
\usepackage{csquotes}
\usepackage[bookmarks]{hyperref}
\usepackage{enumitem}  

\usepackage[title]{appendix}

\newcommand{\norm}[1]{{\Vert #1\Vert}}
\newcommand{\abs}[1]{{\left\vert #1\right\vert}}
\newcommand{\R}{{\mathbb R}}
\newcommand{\C}{{\mathbb C}}
\newcommand{\Z}{{\mathbb Z}}

\newcommand{\ep}{\varepsilon}
\newcommand{\lt}{\left}
\newcommand{\rt}{\right}
\newcommand{\na}{\nabla}
\newcommand{\ti}{\tilde}
\newcommand{\nn}{\nonumber}
\newcommand{\e}{\varepsilon}
\newcommand{\qd}{\quad}
\newcommand{\wt}{\widetilde}

\DeclareMathOperator{\supp}{supp}
\DeclareMathOperator{\dist}{dist}
\newcommand{\loc}{{\rm loc}}

\UndeclareTextCommand{\textcommabelow}{T1}
\DeclareTextCommandDefault{\textcommabelow}[1]{\fontencoding{QX}\selectfont\textcommabelow
#1}

\newtheorem{thm}{Theorem}[section]
\newtheorem{prop}[thm]{Proposition}
\newtheorem{lem}[thm]{Lemma}

\theoremstyle{definition}
\newtheorem{rem}[thm]{Remark}
\newtheorem*{rem*}{Remark}
\newtheorem{ex}[thm]{Example}

\numberwithin{equation}{section}

\title{On  regularity and  rigidity of $2\times 2$ differential inclusions into non-elliptic curves}
\date{}
\author{Xavier Lamy\footnote{Institut de Math\'ematiques de Toulouse, UMR 5219, Universit\'e de Toulouse, CNRS, UPS
IMT, F-31062 Toulouse Cedex 9, France. Email: Xavier.Lamy@math.univ-toulouse.fr}
\and Andrew Lorent\footnote{Department of Mathematical Sciences, University of Cincinnati, Cincinnati, OH 45221, USA. Email: lorentaw@uc.edu} 
\and Guanying Peng\footnote{Department of Mathematical Sciences, Worcester Polytechnic Institute, Worcester, MA 01609, USA. Email: gpeng@wpi.edu}}
\begin{document}

\maketitle

\begin{abstract}
We study differential inclusions $Du\in \Pi$ in an open set $\Omega\subset\mathbb R^2$, where $\Pi\subset \mathbb R^{2\times 2}$ is a 
compact connected
 $C^2$ curve without rank-one connections, 
but non-elliptic:
 tangent lines to $\Pi$ may have rank-one connections,
 so that
 classical regularity and rigidity results do not apply.
For a 
wide
 class of such curves $\Pi$, 
we show that $Du$ is locally Lipschitz outside a discrete set, and is rigidly characterized around each singularity.
Moreover, 
in the partially elliptic case where at least one tangent line to $\Pi$ has no rank-one connections,
or under some topological restrictions on the tangent bundle of $\Pi$,
there are no singularities.
This goes well beyond
 previously known particular cases 
 related to Burgers' equation and to the Aviles-Giga functional.
The key is the identification and 
 appropriate use of a general underlying structure:
 an infinite family of conservation laws,
 called entropy productions in reference to the theory of scalar conservation laws.
\end{abstract}

\section{Introduction}\label{s:intro}

Let $\Omega\subset\R^2$ be an open set.
We 
demonstrate
 regularity and rigidity properties of weakly differentiable maps $u\colon \Omega\to\R^2$ satisfying the differential inclusion
\begin{align*}
Du \in \Pi\qquad\text{a.e. in }\Omega,
\end{align*}
where $\Pi\subset\R^{2\times 2}$ is a 
compact connected $C^2$ curve without rank-one connections, which is 
non-elliptic:
tangent lines to $\Pi$ may be generated by rank-one matrices. (Here and in the rest of the article, by curve we mean a one-dimensional submanifold, with or without boundary, in other words it is always embedded.) 

Regularity of differential inclusions is a subject with a long history.  The best known result is the analyticity of solutions of the Cauchy-Riemann equations -- reformulated as a differential inclusion, this is the statement that if a function 
$u:\Omega\rightarrow \mathbb{R}^2$ satisfies
$Du\in\mathrm{CO}_+(2)=\mathbb R_+ \mathrm{SO}(2)$
 everywhere, then $u$ is analytic. In 1850 Liouville \cite{LIO50} proved that if a $C^3$ function $u$ satisfies the differential inclusion
  $Du\in \mathrm{CO}_{+}(3)=\mathbb R_+ \mathrm{SO}(3)$,
 then it is 
a M\"{o}bius mapping.
The optimal generalization of this result is still an open problem, 
 which 
  has inspired a large literature and to some extent motivated the development of the theory of higher dimensional quasiconformal mappings  \cite{boj,reshetnyak94}.

It is well known (see e.g. \cite{muller-lecture})
 that a necessary condition for  regularity of a differential inclusion $Du\in K$
 is that $K$ should have no rank-one connections: 
 \begin{align*}
 \mathrm{rank}(A-B)\neq 1\quad\text{for all matrices }A\ne B\in K.
 \end{align*}
This condition is by no means sufficient, a differential inclusion without rank-one connection may have wild solutions.
The existence and structure of such wild solutions and their weak limits has been the object of an extremely rich line of research,
with many far-reaching applications in the theory of nonlinear PDEs.
We do not attempt to describe these results here, and refer instead  to the articles \cite{KMS03,DLS22} and the references therein.

Here we focus on the essentially orthogonal question of finding general sufficient conditions on $K$ which ensure 
higher regularity of solutions of the differential inclusion $Du\in K$.
For  differential inclusions in $\R^{2\times 2}$, a
first general sufficient condition is due to
\v{S}ver\'{a}k:
if $K\subset \mathbb{R}^{2\times 2}$ is a smooth connected closed submanifold without rank-one connections that is \em elliptic \rm 
(in the sense that its tangent spaces have no rank-one connections), then solutions of the differential inclusion $Du\in K$ are smooth \cite[\S~5]{sverak93}.

For non-elliptic sets, we are not aware of any general regularity result, but we
describe next two examples.

\begin{ex}\label{ex:burgers}
Consider
 a bounded weak solution of Burgers' equation, which does not dissipate energy:
\begin{align*}
\partial_t v +\partial_x \frac{v^2}{2}=0,\quad 
\partial_t \frac{v^2}{2}
+\partial_x \frac{v^3}{3}=0.
\end{align*}
According to \cite{panov94} (see also \cite{DLOW03minentcond}), the function $v$ is both an entropy and anti-entropy solution of Burgers' equation, and by Oleinik's one-sided Lipschitz estimate must therefore be locally Lipschitz. 
In a simply connected domain, 
the two conservation laws satisfied by $v$ are equivalent to the existence of $u_1,u_2$ such that 
\begin{align*}
Du
=
\left(
\begin{array}{cc}
\partial_t u_1 & \partial_x u_1
\\
\partial_t u_2 & \partial_x u_2
\end{array}
\right)
=
\left(
\begin{array}{cc}
-\frac{v^2}{2} & v
\\
-\frac{v^3}{3} & \frac{v^2}{2}
\end{array}
\right):=\gamma(v),
\end{align*}
so the Lipschitz regularity of $v$ amounts to Lipschitz regularity of $Du$ if $Du\in\Pi =\gamma([a,b])$. 
It can be checked that $\det(\gamma')=0$, so this differential inclusion is nowhere elliptic.
It can however also be checked that it is not too degenerate, in the sense that it satisfies the assumption \eqref{eq:det4} of Theorem \ref{t:reg_gen} below.
\end{ex}

\begin{ex}\label{ex:AG}
In our previous work \cite{LLP20}, motivated by connections with the Aviles-Giga functional \cite{AG99,ADLM99,JK00,JOP02,LP18},  we studied an explicit closed curve
 $K_0\subset\R^{2\times 2}$ 
  which has no rank-one connections, but is nowhere elliptic.
(The precise definition of $K_0$ is recalled in \S~\ref{ss:ex_intro}.)
There we established that solutions of $Du\in K_0$ enjoy some regularity: $Du$ is locally Lipschitz outside a discrete set -- but also some rigidity: $Du$ is explicitly characterized in any convex neighborhood of a singularity. 
Again, it can be checked that $K_0$ is not too degenerate
 in the sense that it satisfies the assumption \eqref{eq:det4} of Theorem \ref{t:reg_gen} below, a fact crucially used in \cite{LP18,LLP20}.
\end{ex}

Our proof in \cite{LLP20} 
relied strongly on the explicit form of $K_0$ and its link with the eikonal equation,
but since then we have been intrigued by the possibility that there might be a general result for differential inclusions into curves that do not have rank-one connections and are not necessarily elliptic.  

This is what we establish in this article:  we prove regularity and rigidity for differential inclusions $Du\in \Pi$, where $\Pi\subset\R^{2\times 2}$ is a generic 
compact connected
 $C^2$ curve which has no rank-one connections but may not be elliptic.
More precisely, a 
compact connected
 curve $\Pi\subset\R^{2\times 2}$ without rank-one connections is elliptic if and only if the quadratic estimate
\begin{align*}
|\det(A-B)|\geq c\: |A-B|^2\qquad\forall A, B\in \Pi,
\end{align*}
is valid for some $c>0$ \cite[\S~5]{sverak93}.
Here we assume only a weaker quartic estimate \eqref{eq:det4}, which allows the tangent lines to have rank-one connections, 
while retaining some weak nondegeneracy. 
Our main theorem is

\begin{thm}\label{t:reg_gen}
Let $\Pi  \subset\R^{2\times 2}$  be a
compact connected 
$C^2$ curve, with or without boundary.
Assume that $\Pi$ has no
 rank-one connections,
and that it satisfies
 the nondegeneracy estimate
\begin{align}\label{eq:det4}
|\det(A-B)|\geq c\, |A-B|^4\qquad\forall A,B\in\Pi,
\end{align}
for some constant $c>0$. For any open set $\Omega\subset\R^2$ and weakly differentiable map $u\colon\Omega\to\R^2$, if $u$ solves the differential inclusion
\begin{align}\label{eq:diffinc}
Du\in\Pi\qquad\text{a.e. in }\Omega,
\end{align}
then $Du$ is locally Lipschitz away from a locally finite set $\mathcal S$. 
Moreover, the singular set $\mathcal S$ is empty 
\color{black}
in the following (non-disjoint) cases:
\color{black}
\begin{itemize}
\item if $\Pi$
 is partially elliptic (at least one tangent line to $\Pi$ has no rank-one connections);
 \item 
if $\Pi$ has a boundary; 
 \item under some topological conditions on the tangent bundle $T\Pi$, to be made explicit in Theorem~\ref{t:NE}.
 \end{itemize}
\end{thm}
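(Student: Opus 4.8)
My plan is to follow the analogy with scalar conservation laws suggested by Example~\ref{ex:burgers}, but at the level of a general curve $\Pi$. The first step is to parametrize $\Pi$ by a $C^2$ map $\gamma\colon I\to\R^{2\times2}$ (with $I$ a compact interval, possibly with its endpoints removed if $\Pi$ has no boundary) and to set $m=\gamma^{-1}\circ Du\colon\Omega\to I$, so that $Du=\gamma(m)$ a.e. The absence of rank-one connections together with the quartic estimate \eqref{eq:det4} should let me show that $m$ inherits enough regularity to make sense of what follows: a first task is to establish that $m$ is (say) $\BV_\loc$ or at least that the curl-free constraint on $Du=\gamma(m)$ forces a transport-type structure on $m$. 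Concretely, writing $\gamma=(\gamma_1,\gamma_2)$ with rows $\gamma_1,\gamma_2\colon I\to\R^2$, the two rows of the PDE $\curl(Du)=0$ read $\partial_2(\gamma_1(m))-\partial_1\big(\text{other entry}\big)=0$, which I want to recast as a system of conservation laws $\partial_1 F(m)+\partial_2 G(m)=0$. The point of \eqref{eq:det4} is that $\det\gamma'(t)$ vanishes to finite order, so that locally one of the components of $\gamma'$ is nonvanishing and the system is genuinely a scalar conservation law in suitable coordinates; away from the (discrete) set where the tangent line degenerates maximally, $m$ solves a scalar conservation law $\partial_1 m+\partial_2 f(m)=0$ in a rotated frame, with $f$ smooth.

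The heart of the argument is then the \emph{entropy production} structure announced in the abstract. For a scalar conservation law, every pair $(\eta,q)$ with $q'=\eta'f'$ gives $\partial_1\eta(m)+\partial_2 q(m)=\mu_\eta$, a measure; the key observation I would exploit is that because $Du=\gamma(m)$ is itself curl-free and $\gamma$ is \emph{globally} defined on $\Pi$, there is an infinite family of such entropy pairs for which $\mu_\eta\equiv0$ — these are the conservation laws coming from the fact that any smooth function on $\Pi$ composed with $Du$ that is ``compatible with the tangent structure'' is divergence-free. Having a two-parameter (or infinite) family of entropies with vanishing production is exactly the situation of Panov's theorem / the minimal entropy conditions of \cite{DLOW03minentcond}, forcing $m$ to be simultaneously an entropy and an anti-entropy solution, hence — via Oleinik's one-sided Lipschitz bound applied in both time directions — locally Lipschitz. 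This shows $Du=\gamma(m)$ is locally Lipschitz on the open set where the scalar reduction is valid, i.e.\ away from the set $\mathcal S$ where $\Pi$'s tangent line is ``most degenerate''; that $\mathcal S$ is locally finite should come from the finiteness of the order of vanishing of $\det\gamma'$ (a consequence of $C^2$-ness plus \eqref{eq:det4}) combined with a blow-up/rigidity analysis at a putative singularity.

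For the rigidity near a singularity and the three cases where $\mathcal S=\emptyset$, the plan is a blow-up argument: rescale $u$ around a point of $\mathcal S$; the limit is a solution of the same inclusion that is $0$-homogeneous in $Du$, i.e.\ $Du$ depends only on the angular variable, which pins down the possible local profiles to an explicit short list of ``two-state'' or ``fan'' configurations determined by the geometry of $\Pi$. If $\Pi$ is partially elliptic, at least one value of $m$ lies on an elliptic tangent line, and \v Sver\'ak's argument (or a perturbation of it) rules out any nontrivial homogeneous profile taking that value, forcing the profile — hence $Du$ near the point — to be constant, so the point is not actually singular. If $\Pi$ has boundary, a profile would have to ``use up'' values all the way to an endpoint of $I$ in a way that is incompatible with the fan structure (the endpoint has no room on one side), again killing the singularity; and the topological condition on $T\Pi$ in Theorem~\ref{t:NE} should be precisely the obstruction to closing up a nonconstant homogeneous profile around the singularity. \textbf{The main obstacle} I anticipate is the first step: establishing enough a priori regularity of $m$ (e.g.\ that $m\in\BV_\loc$, so that entropy productions are well-defined measures and Panov-type arguments apply) directly from $Du\in\Pi$ with only the quartic estimate \eqref{eq:det4}, without the ellipticity that in \v Sver\'ak's setting does this for free; this is where the specific structure of the entropy-production family, rather than any single conservation law, will have to do the work.
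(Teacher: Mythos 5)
Your high-level idea -- exploit an infinite family of conservation laws -- is the right one, and you correctly identify the ``main obstacle'' (obtaining a priori regularity of $m$ from the quartic estimate) as the crux. But beyond that acknowledged gap, there are several points where the proposal would fail as written.

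First, the scalar reduction is off. For the hard nowhere-elliptic case $\det\gamma'\equiv 0$ \emph{on all of $I$}, so there is no subset of $\Pi$ where ``the tangent line degenerates maximally'' versus a good set where it does not; the relevant nondegeneracy is $\det\gamma''\neq 0$ (equivalent to \eqref{eq:det4} for nowhere-elliptic curves). Correspondingly, the locally finite singular set $\mathcal S\subset\Omega$ has nothing to do with degeneracies of $\Pi$: it is a set of \emph{vortex-type singularities in the domain}, the analogue of Aviles--Giga defects. Rescaling around a point $x_0\in\mathcal S$ produces $Du(x)=\gamma(\theta(x))$ with $\Psi(e^{i\theta})=\pm\tfrac{x-x_0}{|x-x_0|}$, which is nonconstant and $0$-homogeneous yet is an honest solution; your blow-up argument, as stated, would have to allow this and then recover rigidity from a classification of homogeneous profiles, which is precisely the JOP/GMPS content the paper imports.

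Second, the proposed conservation-law recast $\partial_1 m+\partial_2 f(m)=0$ ``in a rotated frame'' does not yield a bona fide scalar conservation law because the rotation depends on $m$ itself (the characteristic direction is $\Psi(m)$). This is exactly the eikonal-type structure, not a uniformly convex flux, so the Panov/Oleinik machinery is not directly available. The paper sidesteps this by working with $\mathrm{cof}\,Du=\Gamma(m)$ for $m\colon\Omega\to\mathbb S^1$, reading off $\nabla\cdot\Gamma_1(m)=\nabla\cdot\Gamma_2(m)=0$, and proving that all entropy productions $\nabla\cdot\Phi(m)$ vanish (Proposition~\ref{p:conslaws}); vanishing is not automatic from curl-freeness but requires the Besov regularity plus commutator estimates and a bootstrap, and the rank-one tangent factorization $\partial_\theta\Gamma=\lambda\otimes\Psi$ then identifies $i\Psi(m)$ as a zero-energy Aviles--Giga state, to which \cite{JOP02,GMPS23} apply. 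The missing regularity you flag is resolved classically: \v{S}ver\'ak's compensated-compactness argument gives $m\in B^{1/3}_{4,\infty,\mathrm{loc}}$ directly from \eqref{eq:det4} (Lemma~\ref{l:besov}), with no need for $BV$. Finally, the partially elliptic case is not handled by perturbing \v{S}ver\'ak's argument near elliptic values; the paper uses a unique continuation principle for differential inclusions into quasiconformal envelopes \cite{DPGT23} to show $Du$ is constant whenever it hits an elliptic value on a positive-measure set, and then reduces to the with-boundary nowhere-elliptic case.
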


It is  worth recalling here the powerful observation of Zhang in \cite{zhang97} that any connected set $K\subset\R^{2\times 2}$ without rank-one connection is contained in the graph of a Lipschitz mapping between conformal and anticonformal matrices.
This fact is at the basis of important advances on properties of $2\times 2$ differential inclusions, see e.g. \cite{FS08},
and the perspective it gives is implicit in many of our arguments.

\begin{rem}\label{r:reg_Pi}
Theorem~\ref{t:reg_gen} requires the curve $\Pi$ to be of class $C^2$.
If we assume that $\Pi$ is smoother, say $C^3$, 
some small parts of our arguments can be simplified.
For instance, the proof of Lemma~\ref{l:detcontrols4} is shorter (as explained in \textsection~\ref{s:det4NE}), 
and Lemma~\ref{l:conslawsC1} is superfluous.
Moreover, it is natural to wonder whether  the $C^2$ regularity assumption on $\Pi$ could be relaxed -- to merely $C^1$ or some intermediate regularity.
We do use the $C^2$ assumption in a quite crucial way, for instance in Lemma~\ref{l:detcontrols4} and for the commutator estimates in \textsection~\ref{s:conslaws}, 
and dealing with a less regular $\Pi$ would certainly have to involve refined arguments.
\end{rem}

\begin{rem}\label{r:nondeg_assumption}
Without the nondegeneracy assumption \eqref{eq:det4}, $Du$ might fail to be Lipschitz away from singularities. 
Consider for instance, for $q > 0$, the scalar conservation law
\begin{align*}
\partial_t v +\partial_x \frac{v^2|v|^q }{2+q} = 0,
\end{align*}
whose characteristic curves are given by
$
x=x_0 +t v_0 |v_0|^q.
$
For any bounded continuous
and 
 monotone nondecreasing initial condition $v_0(x)$, there is a solution $v(t,x)$ constant along characteristics for $t>0$.
That solution also solves
\begin{align*}
\partial_t \frac{v^2}{2} +\partial_x \frac{v|v|^{q+2}}{3+q} =0,
\end{align*}
and provides a solution of the differential inclusion 
\begin{align*}
Du\in 
 \Pi := \left\lbrace
\left(
\begin{array}{cc}
-\frac{w^2|w|^q}{2+q} & w
\\
-\frac{w |w|^{q+2}}{3+q} & \frac{w^2}{2}
\end{array}
\right)
\colon |w|\leq \|v_0\|_\infty
\right\rbrace,
\end{align*}
which satisfies
\begin{align}\label{eq:det4q}
\det(A-B)\geq c |A-B|^{4+q}\qquad\forall A,B\in\Pi.
\end{align}
 Arguing as in \cite[Proposition~4.3]{COW08}, 
 one can
  choose the initial data $v_0$ in a way that 
  $v(t,x)$ is Hölder continuous with Hölder exponent not better than $1/(1+q)$.
Zero-energy states of generalized Aviles-Giga energies \cite{BP17,LLP22}, 
would also provide closed curves $\Pi$ satisfying \eqref{eq:det4q} and solutions of $Du\in\Pi$ for which $Du$ is not better than $1/(1+q)$-Hölder continuous, 
see in particular \cite[Remark~4.3]{BP17}.
\end{rem}

In view of Remark~\ref{r:nondeg_assumption}, 
the following open question is very natural:

\medskip
\noindent\textbf{Question:} If $\Pi\subset\R^{2\times 2}$ is a compact connected $C^2$ curve satisfying \eqref{eq:det4q} for some $q>0$, can one deduce that any solution $u$ of $Du\in\Pi$ a.e. in an open set $\Omega\subset\R^2$ is locally $C^{1,\frac{1}{1+q}}$  away from a locally finite set?

In fact, one could even ask:  if $\Pi\subset\R^{2\times 2}$ is a compact connected $C^2$ curve without rank-one connection, can one deduce that any solution of $Du\in\Pi$ is locally $C^1$ away from a locally finite set?

\medskip

Our method fails to answer these questions because 
we bootstrap from an initial low regularity $Du\in B^{1/3}_{p,\infty,\loc}$ for some $p>3$, which we do not know how to obtain if \eqref{eq:det4} is not satisfied (see Lemma~\ref{l:besov}).

\color{black}
The heart of Theorem~\ref{t:reg_gen} is the nowhere elliptic case, 
where all tangent lines to $\Pi$ have rank-one connections.
The partially elliptic case can essentially be reduced to the case of a nowhere elliptic curve with boundary : this is one of the reasons why it is important to treat curves with a boundary.

In the rest of this introduction, 
we present the precise statements concerning the reduction
to nowhere elliptic curves and
 the structure of solutions in that case.
Applying a homothety, we assume without loss of generality that $\Pi$ has length 
at most $2\pi$.
We fix $\gamma\colon  I \to\Pi$ an arc-length parametrization of class $C^2$,
where $I$ is either a segment $[a,b]\subset\R$ ($a<b<a+2\pi$) in the case with boundary, or $I=\R/2\pi\Z$ in the case without boundary. 
\color{black}

\subsection{\color{black}Reduction to nowhere elliptic curves}\label{ss:PE}

We  divide the curve $\Pi=\gamma(I)$  into elliptic and non-elliptic points,
\begin{align*}
\Pi
&=\Pi_E \cup \Pi_{NE},\qquad
\Pi_E
=\left\lbrace \gamma(t)\colon \det(\gamma'(t))\neq 0\right\rbrace,\qquad \Pi_{NE}=\Pi\setminus \Pi_E\,.
\end{align*}

\begin{thm}\label{t:PE}
Let $\Pi\subset\R^{2\times 2}$ be a 
compact connected  $C^2$ curve without rank-one connections.

\begin{itemize}
\item[(i)]
For any connected open set $\Omega\subset\R^2$ and weakly differentiable map $u\colon\Omega\to\R^2$, if $u$ solves the differential inclusion
\begin{align*}
Du\in\Pi
\qquad\text{a.e. in }\Omega,
\end{align*}
then 
 either $Du\in \Pi_{NE}$ a.e. or $Du$ is constant.

\item[(ii)]
Assume moreover that $\Pi$ satisfies the quartic nondegeneracy estimate \eqref{eq:det4}, then
\begin{itemize}
\item[(a)] either $Du$ is constant,

\item[(b)] 
or  
 $Du$  
takes values into one single connected component of $\Pi_{NE}$.
\end{itemize}
\end{itemize}
\end{thm}

In the partially elliptic case $\Pi_E\neq \emptyset$, 
each connected component of $\Pi_{NE}$ is a nowhere elliptic curve with non-empty boundary.
Therefore, 
 the regularity of $Du$ in case $(ii.b)$ of Theorem~\ref{t:PE} will be given by Theorem~\ref{t:NE_bdry} below.

Part $(i)$ of Theorem~\ref{t:PE} follows
 from the unique continuation principle established recently in \cite{DPGT23}.
(A weaker version of this result, namely $Du$ is locally constant in $\Omega_E=(Du)^{-1}(\Pi_E)$, which is sufficient for our purpose, could also be deduced
from regularity properties of degenerate elliptic equations in two variables 
 established in \cite[Theorem~1.9]{lledos23}.)
Under 
a low regularity assumption which is implied by
 the quartic nondegeneracy estimate \eqref{eq:det4}, this first conclusion can be strengthened to $Du$ being either constant or with values into a single connected component of $\Pi_{NE}$.
%

\subsection{Nowhere elliptic curves without boundary}\label{ss:NE_closed_intro}

{\color{black}
Thanks to Theorem~\ref{t:PE}, the proof of Theorem~\ref{t:reg_gen} reduces to the nowhere elliptic case, 
which we now proceed to describe more precisely,
starting with curves without boundary.
Consider a closed $C^2$ curve 
$\Pi=\gamma(\R/2\pi\Z)$ 
 that is nowhere elliptic.
 In terms 
 of the arc-length parametrization $\gamma$, this} amounts to
\begin{align*}
\det( \gamma'(\theta))=0\qquad\forall \theta\in\R.
\end{align*}
This is the only case where the differential inclusion \eqref{eq:diffinc} may develop singularities.
Specific details about rigidity of singularities depend on 
 topological properties of 
the tangent bundle $T\Pi$.

\color{black}
More specifically, this tangent bundle 
 induces a loop  into
 the projective line $\mathbb{RP}^1=\mathbb S^1/\lbrace\pm 1\rbrace$,
 as follows.
The matrix $(\mathrm{cof}\,\gamma'(\theta))^T$ has rank-one, 
hence its range is spanned by a unique 
 $\Psi(e^{i\theta})\in\mathbb{RP}^1$. 
The map $\Psi\colon\mathbb S^1\to\mathbb {RP}^1$
is the desired loop, and it can be explicitly expressed in terms of
$\gamma$, see Lemma~\ref{l:lambdaPsi} and Remark~\ref{r:Psi}.
This loop carries a winding number $\deg(\Psi)$, which characterizes its homotopy class in $\pi_1(\mathbb{RP}^1)$.
\color{black}

We adopt here, following e.g. \cite[\S~VIII.B]{BCL86}, the convention that the winding number is a half-integer: 
it is given by 
\begin{align}\label{eq:degPsi}
\deg(\Psi)=\frac{\varphi_\Psi(2\pi)-\varphi_\Psi(0)}{2\pi} \in \frac 12\Z,
\end{align}
for any continuous phase $\varphi_\Psi\colon\R\to\R$ such that
$\Psi(e^{i\theta})=\lbrace\pm e^{i\varphi_\Psi(\theta)}\rbrace$.
The map $\Psi$ is orientable, that is, can be lifted to a $C^1$ map $\Psi\colon\mathbb S^1\to\mathbb S^1$, if and only if $\deg(\Psi)\in \mathbb Z$, and in that case $\deg(\Psi)$ corresponds to the usual winding number for loops in $\mathbb S^1$.

Our precise description of regularity and rigidity properties of the differential inclusion into $\Pi$ depends on the value  of this winding number.
\color{black}

\begin{thm}\label{t:NE}
Let $\Pi\subset\R^{2\times 2}$ be a closed  $C^2$ curve of length $2\pi$, without rank-one connections, nowhere elliptic, 
{\color{black}
and satisfying the nondegeneracy estimate \eqref{eq:det4}.}
Denote by $\gamma\in C^2(\R/2\pi\Z;\R^{2\times 2})$ an arc-length parametrization of $\Pi$.

For any open set $\Omega\subset\R^2$ and weakly differentiable map $u\colon\Omega\to\R^2$, if $u$ solves the differential inclusion
\begin{align*}
Du\in\Pi=\gamma(\R/2\pi\Z)\qquad\text{a.e. in }\Omega,
\end{align*}
then $Du$ is locally Lipschitz outside a locally finite set 
$\mathcal S\subset\Omega$. 

\medskip

Moreover,
letting $\Psi\colon\mathbb S^1\to\mathbb {RP}^1=\mathbb S^1/\lbrace\pm 1\rbrace$
be the $C^1$ map such that 
the image of the rank-one matrix $(\mathrm{cof}\,\gamma'(\theta))^T$ is spanned by $\Psi(e^{i\theta})$,
we have that
 $\mathcal S$ is empty if 
$|\deg(\Psi)|\notin \lbrace 1/2,1\rbrace$, and singularities are rigid if 
$|\deg(\Psi)|\in\lbrace 1/2,1\rbrace$.

\medskip

More precisely:
\begin{itemize}

\item[(a)] If $|\deg(\Psi)| \notin\lbrace 1/2,1\rbrace $, 
then $\mathcal S=\emptyset$ and
 $Du=\gamma(\theta)$ is constant along characteristic lines directed by $\Psi(e^{i\theta})$.

\item[(b)] If $|\deg(\Psi)|=1$, then  
the map $\Psi$ 
can be lifted to
 a $C^1$ diffeomorphism $\Psi\colon\mathbb S^1\to\mathbb S^1$. 
Moreover, in any convex subset $U \subset\Omega$ containing 
a singular point $x_0\in U\cap\mathcal S$,
 we have
$Du=\gamma(\theta)$ with $e^{i\theta}=\Psi^{-1}(v)$ and $v\colon U\to\mathbb S^1$ is given by
\begin{align*}
v(x)=
\tau\frac{x-x_0}{|x-x_0|}
\qquad\text{for a.e. }x\in U,
\end{align*}
for some $\tau\in\lbrace \pm 1\rbrace$.

\item[(c)] If $|\deg(\Psi)|=1/2$, then the map $\Psi\colon \mathbb S^1\to \mathbb{RP}^1=\mathbb S^1/\lbrace \pm 1 \rbrace$
is a $C^1$ diffeomorphism. Moreover, for any disk $B_{2r}(x_0) \subset\Omega$ centered at 
a singular point $\lbrace x_0\rbrace = B_{2r}(x_0)\cap\mathcal S$,
 we have
$Du=\gamma(\theta)$ with $e^{i\theta}=\Psi^{-1}(\lbrace\pm v\rbrace)$ and   $v\colon B_r(x_0)\to \mathbb S^1$ is given  by either
\begin{align*}
v(x)=
\frac{x-x_0}{|x-x_0|}
\qquad\text{for a.e. }x\in B_r(x_0),
\end{align*}
or there exists $\zeta\in\mathbb S^1$ such that
\begin{align*}
v(x)
&
\begin{cases}
=\frac{x-x_0}{|x-x_0|}
\qquad\text{for a.e. }x\in B_r(x_0)\cap\lbrace (x-x_0)\cdot\zeta >0\rbrace,\\
\text{ is Lipschitz in }B_r(x_0)\cap\lbrace (x-x_0)\cdot\zeta \leq 0\rbrace.
\end{cases}
\end{align*}
\end{itemize}

\end{thm}

\begin{rem}\label{r:nondegen_NE}
For nowhere elliptic curves, the nondegeneracy estimate \eqref{eq:det4} happens to be equivalent to the condition $\det(\gamma''(\theta))\neq 0$, see \textsection~\ref{s:det4NE}.
\end{rem}

\begin{rem}\label{r:optimal}
The rigid singularities in parts $(b)$ and $(c)$ of Theorem~\ref{t:NE} correspond to \emph{zero-energy states} of Aviles-Giga functionals.
In case $(b)$, the   $\mathbb S^1$-valued map $w=i\Psi(e^{i\theta})$ is a zero-energy state of the Aviles-Giga functional as described in \cite{JOP02}.
In case $(c)$, the $\mathbb{RP}^1$-valued map $v=i\Psi(e^{i\theta})$ is a zero-energy state of an unoriented Aviles-Giga functional, as described in \cite{GMPS23}.
In particular, optimality of our regularity statements follows from the optimality of the regularity statements in \cite{JOP02,GMPS23}.
It is also instructive
 to compare Theorem~\ref{t:NE} with \cite{Iq00}, where, given any non-elliptic curve,  nontrivial laminate measures are constructed in an arbitrarily small neighborhood of the curve. 
\end{rem}

\subsection{Nowhere elliptic curves with boundary}

Now we consider a compact connected nowhere elliptic curve with boundary. 
Since the curve is not closed, the differential inclusion \eqref{eq:diffinc} cannot have singularities with nontrivial winding numbers as in parts $(b)$ and $(c)$ of Theorem~\ref{t:NE}.

\begin{thm}\label{t:NE_bdry}
Let $\Pi\subset\R^{2\times 2}$ be a compact connected  $C^2$ curve 
with non-empty boundary, without rank-one connections, nowhere elliptic,
{\color{black}
and satisfying the nondegeneracy estimate \eqref{eq:det4}.}
Denote by $\gamma\in C^2([a,b];\R^{2\times 2})$ an arc-length parametrization of $\Pi$.

For any open set $\Omega\subset\R^2$ and weakly differentiable map $u\colon\Omega\to\R^2$, if $u$ solves the differential inclusion
\begin{align*}
Du\in\Pi=\gamma([a,b])\qquad\text{a.e. in }\Omega,
\end{align*}
then $Du=\gamma(\theta)$ is locally Lipschitz in $\Omega$,
and constant along characteristic lines directed by $\Psi(e^{i\theta})$,
where $\Psi\colon [a,b]\to \mathbb{RP}^1$ is the $C^1$ map such that
the image of the rank-one matrix $(\mathrm{cof}\,\gamma'(\theta))^T$ is spanned by $\Psi(e^{i\theta})$.
\end{thm}

The proof of Theorem~\ref{t:NE_bdry} is essentially a reproduction of the proof of Theorem~\ref{t:NE} case $(a)$. 
{\color{black}
This case with boundary is crucial in the proof of Theorem~\ref{t:reg_gen} in the partially elliptic case, via Theorem~\ref{t:PE}-$(ii.b)$. }

\subsection{Examples of closed nowhere elliptic curves}
\label{ss:ex_intro}

It is natural to wonder whether there exist many curves $\Pi\subset\R^{2\times 2}$ satisfying the assumptions
of Theorem~\ref{t:NE}.
One important example is the curve $K_0$
 studied in \cite{LLP20}, 
which is parametrized by
\begin{align*}
\gamma_2(t)=\frac 12 [e^{it}]_c +\frac{1}{6}[e^{3it}]_a.
\end{align*}
\color{black}
where, for $z\in \mathbb C$, $[z]_c$ and $[z]_a$ are the naturally associated conformal and anticonformal matrices,
\begin{align}\label{eq:zac}
[z]_c 
=
\left(\begin{array}{cc}
\mathfrak{Re}\,z  & -\mathfrak{Im}\,z \\
\mathfrak{Im}\,z & \mathfrak{Re}\,z 
\end{array}\right),
\quad [z]_a=
\left(\begin{array}{cc}
\mathfrak{Re}\,z  & \mathfrak{Im}\,z \\
\mathfrak{Im}\,z & -\mathfrak{Re}\,z 
\end{array}\right).
\end{align}
\color{black}
Indeed there is a family of related examples, parametrized by
\begin{equation}\label{eq:gamma_k}
\gamma_{k}(t)=\frac{1}{2} [e^{it}]_c +\frac{1}{2(k+1)}[e^{(k+1)it}]_a \qquad \forall k\in \mathbb{N},\, k\geq 1.
\end{equation}
That these satisfy the assumptions will be checked in \S~\ref{ss:gammak}, and 
 similar examples will be given in \S~\ref{ss:ex}.
For this curve $\gamma_k$, 
the map $\Psi$ appearing in Theorem~\ref{t:NE}
{\color{black}
is given by $\Psi(e^{i\theta})=\lbrace \pm ie^{ik\theta/2}\rbrace$,
as can be inferred from the proof of Lemma~\ref{l:lambdaPsi},
hence it}  has winding number $\deg(\Psi)=k/2$.

In general, it is easy to check whether a closed $C^2$ curve $\Pi$ parametrized by $\gamma\in C^2(\R/2\pi\Z;\R^{2\times 2})$ is nowhere elliptic, as this simply amounts to the local condition $\det(\gamma')=0$.
It is also easy to check the nondegeneracy assumption \eqref{eq:det4} as it is equivalent to $|\det(\gamma'')| > 0$, see Remark~\ref{r:nondegen_NE} and \textsection~\ref{s:det4NE}.
Let us denote by $\mathrm{NE}_*$ the set of such parametrizations:
\begin{align*}
\mathrm{NE}_*=\left\lbrace \gamma\in C^2(\R/2\pi\Z;\R^{2\times 2}) \colon \det(\gamma')= 0
\text{ and }|\det(\gamma'')|>0
\right\rbrace.
\end{align*}
What is usually harder is to check the condition that $\Pi=\gamma(\R/2\pi\Z)$ has no rank-one connections.
We show however that the subset $\mathrm{NE}_{**}\subset \mathrm{NE}_*$ which corresponds to curves without rank-one connections is somewhat large,
in the sense that it is open.

\begin{prop}\label{p:NE}
The set 
\begin{align*}
\mathrm{NE}_{**}=
\Big\lbrace \gamma\in \mathrm{NE}_*\colon
&
\gamma(\R/2\pi\Z)\text{ has no rank-one connections}
\Big\rbrace,
\end{align*}
is open in $\mathrm{NE}_*$ for the $C^2$ topology.
In particular it contains 
a neighborhood of each curve $\gamma_{k}$ \eqref{eq:gamma_k}.
\end{prop}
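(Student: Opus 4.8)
The plan is to show that the complement $\mathrm{NE}_*\setminus\mathrm{NE}_{**}$ — parametrizations $\gamma$ for which $\Pi=\gamma(\R/2\pi\Z)$ does have a rank-one connection — is closed in $\mathrm{NE}_*$ for the $C^2$ topology; the final sentence then follows since each $\gamma_k$ lies in $\mathrm{NE}_{**}$ (to be verified in \S\ref{ss:gammak}). So suppose $\gamma_n\to\gamma$ in $C^2(\R/2\pi\Z;\R^{2\times 2})$ with $\gamma_n\in\mathrm{NE}_*\setminus\mathrm{NE}_{**}$ and $\gamma\in\mathrm{NE}_*$; I must produce a rank-one connection for $\gamma(\R/2\pi\Z)$. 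For each $n$ there are $s_n\neq t_n$ in $\R/2\pi\Z$ with $\mathrm{rank}(\gamma_n(s_n)-\gamma_n(t_n))=1$, i.e.\ $\det(\gamma_n(s_n)-\gamma_n(t_n))=0$ and $\gamma_n(s_n)\neq\gamma_n(t_n)$. By compactness of $\R/2\pi\Z$, pass to a subsequence so that $s_n\to s$, $t_n\to t$. By $C^0$ convergence, $\det(\gamma(s)-\gamma(t))=0$. The crux is to rule out the degenerate scenario $s=t$ where the limiting ``connection'' collapses to a point and carries no information.

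The main obstacle is precisely this non-degeneracy of the limit, and this is where the structural hypotheses $\det(\gamma')\equiv 0$ and $|\det(\gamma'')|>0$ enter. When $s=t$, a Taylor expansion gives
\begin{align*}
\gamma_n(s_n)-\gamma_n(t_n)=(s_n-t_n)\gamma'(t)+\tfrac12(s_n-t_n)^2\gamma''(t)+o\big((s_n-t_n)^2\big)+o(s_n-t_n)_{C^2\text{-error}},
\end{align*}
where the last term accounts for $\gamma_n\to\gamma$. Dividing by $s_n-t_n$ and letting $n\to\infty$, the unit-speed normalization $|\gamma'|=1$ forces the rescaled differences to converge (after a further subsequence) to $\gamma'(t)$, which has rank one since $\det(\gamma')\equiv 0$; this alone does not yet give a contradiction. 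The point is to extract second-order information: writing $M_n=(s_n-t_n)^{-1}\big(\gamma_n(s_n)-\gamma_n(t_n)\big)$, the condition $\det M_n=0$ together with $\det(\gamma'(t))=0$ and the expansion above yields, upon expanding $\det$ bilinearly and dividing by the appropriate power of $(s_n-t_n)$, a relation of the form $\langle \text{cof}\,\gamma'(t),\gamma''(t)\rangle = 0$ in the limit — but differentiating $\det(\gamma'(\theta))\equiv 0$ gives exactly $\langle\text{cof}\,\gamma'(\theta),\gamma''(\theta)\rangle\equiv 0$, so this is \emph{automatically} satisfied and still not a contradiction. One must go to the next order: the genuinely new constraint is that $\det\big(\gamma'(t)+\tfrac12(s_n-t_n)\gamma''(t)+\cdots\big)=o(s_n-t_n)$, and expanding this using $\det(\gamma')=0$ and the vanishing first-order term leaves $\tfrac14(s_n-t_n)^2\det(\gamma''(t))=o(s_n-t_n)^2$, i.e.\ $\det(\gamma''(t))=0$, contradicting $|\det(\gamma'')|>0$. (The $C^2$-errors from $\gamma_n\to\gamma$ are lower order in $n$ and do not affect this, provided one first fixes the subsequence.) Hence $s\neq t$.

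With $s\neq t$ secured, $\gamma(s)-\gamma(t)=\lim(\gamma_n(s_n)-\gamma_n(t_n))$ has vanishing determinant; it remains to check it is nonzero. If $\gamma(s)=\gamma(t)$ then, since $\gamma$ is an arc-length parametrization of an embedded curve and $s\neq t$, this is impossible unless $s,t$ are the two endpoints of a closed-curve parametrization identified in $\R/2\pi\Z$ — but embeddedness of $\Pi$ rules out $\gamma(s)=\gamma(t)$ for $s\neq t$ in $\R/2\pi\Z$. Therefore $\gamma(s)\neq\gamma(t)$ and $\mathrm{rank}(\gamma(s)-\gamma(t))=1$, so $\gamma\notin\mathrm{NE}_{**}$, proving that $\mathrm{NE}_*\setminus\mathrm{NE}_{**}$ is closed and hence $\mathrm{NE}_{**}$ is open in $\mathrm{NE}_*$. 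Finally, since $\gamma_k\in\mathrm{NE}_{**}$ for each $k\geq 1$, openness provides a $C^2$-neighborhood of $\gamma_k$ inside $\mathrm{NE}_{**}$.
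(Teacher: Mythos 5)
Your overall strategy — proving that $\mathrm{NE}_*\setminus\mathrm{NE}_{**}$ is closed in $\mathrm{NE}_*$ by a sequential compactness argument — is a genuinely different route from the paper's. The paper argues quantitatively: for $\bar\gamma\in\mathrm{NE}_{**}$, Lemma~\ref{l:detcontrols4} gives $\det(\bar\gamma(t)-\bar\gamma(s))\geq\bar\kappa\,|e^{it}-e^{is}|^4$, and the expansion \eqref{eq:incr_det_gamma} shows that a $\delta$-small $C^2$-perturbation changes this determinant by at most $C\delta\,|e^{it}-e^{is}|^4$, so the quartic lower bound persists on a whole $C^2$-neighborhood. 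That approach is constructive and yields a uniform nondegeneracy constant near $\bar\gamma$; yours is purely qualitative. Unfortunately yours also has a genuine gap.

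The gap is in the Taylor-expansion step ruling out $s=t$. Writing $h_n=s_n-t_n$, the $C^2$ Taylor theorem gives $\gamma_n(s_n)-\gamma_n(t_n)=h_n\gamma_n'(t_n)+\tfrac{h_n^2}{2}\gamma_n''(t_n)+h_n^2\eta_n$ with $\eta_n\to 0$, and nothing better. After dividing by $h_n$ and expanding $\det M_n=0$ with $\det(\gamma_n')=0$ and $\mathrm{cof}\,\gamma_n':\gamma_n''=0$, the surviving terms are $\mathrm{cof}\,\gamma_n'(t_n):(h_n\eta_n)$ — which is only $o(h_n)$ — together with $\tfrac{h_n^2}{4}\det(\gamma_n''(t_n))$; the former generically swamps the latter, so the limit carries no information. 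Your claim that one ``leaves $\tfrac14(s_n-t_n)^2\det(\gamma''(t))=o(s_n-t_n)^2$'' is thus unjustified. Getting a genuine $O(h^4)$ expansion of $\det(\gamma(t+h)-\gamma(t))$ from only $C^2$ data is exactly the content of Lemma~\ref{l:detcontrols4}, whose proof uses the integral remainder and the fact that $\mathrm{cof}\,\gamma':\gamma''=(\det\gamma')'$ is an exact derivative; it produces the coefficient $-\tfrac{1}{12}$, not $\tfrac14$ (you have dropped the contribution corresponding to $\mathrm{cof}\,\gamma':\gamma^{(3)}=-2\det\gamma''$), and a remainder $\tilde\e(t,h)$ with $\sup_t|\tilde\e(t,h)|$ controlled by the modulus of continuity of $\gamma''$. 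If you cite that lemma and note that the error is uniform in $n$ because $\gamma_n''\to\gamma''$ in $C^0$, your sequential argument does close: passing to the limit yields $\det(\gamma''(t))=0$, the desired contradiction. But the parenthetical ``the $C^2$-errors from $\gamma_n\to\gamma$ are lower order in $n$ and do not affect this'' is precisely what needs proof and cannot be waved away.

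A lesser point: you invoke embeddedness to rule out $\gamma(s)=\gamma(t)$ with $s\neq t$, but the definition of $\mathrm{NE}_*$ places no injectivity constraint on $\gamma$. The paper's quantitative proof is also silently using injectivity (in its ``WLOG $\det(\bar\gamma(t)-\bar\gamma(s))>0$ for all $s\neq t$''), consistent with the paper's convention that curves are embedded, but your compactness formulation makes this assumption more visible and worth a sentence.
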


\subsection{Strategy of proof:  entropy productions}
\label{ss:strategy}

Our strategy to prove Theorem~\ref{t:reg_gen} is to obtain a large family of nonlinear conservation laws, which can then be used to deduce regularity and rigidity. 

The basic principle is as follows. 
The differential inclusion $Du\in\Pi=\gamma(I)$ implies $Du=\gamma(\theta)$ for some real-valued function $\theta$.
The standard identity $\na\cdot\mathrm{cof}\, Du=0$ can be interpreted as two conservation laws for the function $\theta$, 
one from each row of the matrix $\mathrm{cof}\, Du=\mathrm{cof}\,\gamma(\theta)$.
If $\theta$ is a smooth function, then the chain rule provides an infinite family of conservation laws $\na\cdot \Xi(\theta)=0$, called entropy productions in analogy with the theory of scalar conservation laws. 
(Any smooth map $\Xi\colon\R\to\R^2$ such that $\Xi'$ is a linear combination of the two rows of $\mathrm{cof}\,\gamma'$ has this property.)
If $\theta$ is not smooth, entropy productions are distributions which can in general not be computed via the chain rule, and may not vanish.
One might however expect a partial converse statement: if all entropy
 productions vanish, then $\theta$ is (somewhat) regular. 
 This is the type of regularity property on which Theorem~\ref{t:reg_gen} relies.

\color{black}
For uniformly convex
\color{black}
 scalar conservation laws, such regularity property is a consequence of regularity features of entropy solutions \cite{kruzhkov70}: if all entropy productions vanish, then the solution is
 \color{black}
 locally
 \color{black} Lipschitz, 
 \color{black}
 see Example~\ref{ex:burgers}.
 \color{black}

In the theory of the Aviles-Giga functional, methods based on entropy productions were introduced in \cite{DKMO01} to obtain compactness properties, inspired by similar arguments for scalar conservation laws \cite{tartar79,tartar82}, and have been widely used since then (see e.g. \cite{DLO03,lorent14,DLI15,GL20}).
The analog of the above regularity property is 
 the characterization by Jabin, Otto and Perthame  \cite{JOP02} of zero-energy solutions to the two-dimensional eikonal equation: if $\na\cdot m=0$ and $|m|=1$ in $\Omega\subset \R^2$, and all entropy productions vanish, then $m$ is locally Lipschitz outside a discrete set, and moreover singularities are rigid.
 This result was improved by the last two authors in \cite{LP18}, where it was shown that only two specific entropy productions $\na\cdot\Sigma_1(m)=\na\cdot\Sigma_2(m)=0$ are needed to obtain the same conclusion.
And we improved it further  in \cite{LLP20}
by showing that the original conservation law $\na\cdot m=0$ is not even needed. 
This amounts to a regularity and rigidity result for the differential inclusion
$Du\in K_0$
\color{black}
mentioned in Example~\ref{ex:AG}, 
\color{black}
where $K_0=\Sigma^\perp(\mathbb S^1)\subset\R^{2\times 2}$, and $\Sigma^\perp$ is the matrix-valued map whose two rows are $i\Sigma_1,i\Sigma_2$.
Here we identify $\R^2\approx \C$ and multiplication by $i$ corresponds to rotation by $\pi/2$.

What we explain next is that differential inclusions into curves in $\R^{2\times 2}$ 
are endowed with a structure similar to entropy productions of the two-dimensional eikonal equation $\na\cdot m=0$, $|m|=1$, 
and this structure can be used to obtain analogs of \cite{JOP02,LP18,LLP20}.

Recall that $\gamma\colon I\to\R^{2\times 2}$ is an arc-length parametrization of $\Pi$, and consider $J=\exp(iI)\subset\mathbb S^1$, that is, $J=\mathbb S^1$ if $I=\R/2\pi\Z$ and $J=\lbrace e^{i\theta}\rbrace_{\theta\in [a,b]}$ if $I=[a,b]$.
We let $\Gamma\colon J\to\R^{2\times 2}$ denote the cofactor matrix 
\begin{align}\label{eq:Gamma}
\Gamma(e^{i\theta})=\mathrm{cof}\,\gamma(\theta)\qquad\forall\theta\in I.
\end{align}
Any solution of the differential inclusion \eqref{eq:diffinc} satisfies $\mathrm{cof}\, Du \in\Gamma(J)$ a.e., so there exists $m\colon  \Omega\to J\subset\mathbb S^1$ such that
\begin{align}\label{eq:cofDuGammam}
\mathrm{cof}\, Du =\Gamma(m).
\end{align}
Therefore the identity
$\na\cdot \mathrm{cof}\, Du=0$ implies
\begin{align}
\label{eq:divGamma0}
\nabla\cdot \Gamma_1(m)=\nabla\cdot\Gamma_2(m)=0\qquad\text{in }\mathcal D'(\Omega),
\end{align}
where $\Gamma_1,\Gamma_2\colon J\to\R^2$ are the first and second rows of the matrix-valued map $\Gamma$. 
So the unit vector field $m$ satisfies two conservation laws,
 similarly to the examples~\ref{ex:burgers} and \ref{ex:AG}.
If $m$ were smooth, then the chain rule 
would provide an infinite family of conservation laws
\begin{align*}
\nabla\cdot \Phi(m)=0\qquad\text{in }\mathcal D'(\Omega),
\end{align*}
for any smooth map $\Phi\colon J\to\R^2$ such that 
\begin{align*}
\partial_\theta\Phi =\alpha_\Phi^1 \partial_\theta\Gamma_1 +\alpha_\Phi^2 \partial_\theta\Gamma_2,
\qquad
\alpha_\Phi^1,\alpha_\Phi^2\colon J\to\R.
\end{align*}
In analogy with scalar conservation laws, we follow the terminology of \cite{DKMO01} and call such maps $\Phi$ entropies, the distributions $\nabla\cdot\Phi(m)$ entropy productions, and we let
\begin{align}\label{eq:ENT}
\mathrm{ENT}_\Gamma =\Big\lbrace
\Phi\in C^2(J;\R^2)
\colon
&
\exists \alpha_\Phi^1,\alpha_\Phi^2\in C^1(J;\R)\text{ s.t. }
\nonumber
\\
&
\partial_\theta\Phi =\alpha_\Phi^1 \partial_\theta\Gamma_1 +\alpha_\Phi^2 \partial_\theta\Gamma_2
\Big\rbrace.
\end{align} 
Our map $m$ is not smooth enough to apply the chain rule, so it is not obvious that entropy productions should vanish.
If we manage to prove that they do, then the ideas of \cite{JOP02,GMPS23} can be applied to obtain regularity and rigidity (\color{black}see \S~\ref{s:proofNE}\color{black}).
Therefore, the main ingredient in our proof of Theorem~\ref{t:reg_gen} is the following proposition, which shows that  entropy productions do vanish provided $m$ has some low fractional regularity 
(it is classical that this starting low regularity is guaranteed by the quartic nondegeneracy estimate \eqref{eq:det4}, see Lemma~\ref{l:besov}).
We state it here in the nowhere elliptic
 case where $\det(\partial_\theta\Gamma) \equiv 0$ 
 (i.e. all tangent lines to $\Pi$ are generated by rank-one matrices),
under the nondegeneracy assumption that $\det(\partial^2_\theta\Gamma)$ does not vanish, which  in that case is equivalent to
the quartic estimate
 \eqref{eq:det4} (see Lemma~\ref{l:detcontrols4}).
It will a posteriori be valid in the general setting of Theorem~\ref{t:reg_gen}.

\begin{prop}\label{p:conslaws}
Let $J\subset\mathbb S^1$ be compact and connected.
Assume that $\Gamma\in C^2(J;\R^{2\times 2})$ satisfies $|\partial_\theta\Gamma|=1$, $\det(\partial_\theta\Gamma)=0$ and $|\det(\partial_\theta^2\Gamma)|>0$  on $J$.
	Then any solution $m\colon \Omega\to J\subset\mathbb S^1$ of \eqref{eq:divGamma0} satisfies
	\begin{align}\label{eq:conslaws}
		\nabla\cdot\Phi(m)=0\quad\text{ in }\mathcal D'(\Omega),\quad\forall\Phi\in \mathrm{ENT}_\Gamma,
	\end{align}
	provided $m\in B^{\frac 13}_{p,\infty,\mathrm{loc}}(\Omega;\mathbb S^1)$ for some $p>3$, that is,
	\begin{align}\label{eq:besov}
		\sup_{|h|\leq 1}\frac{\|D^h m\|_{L^p(U)}}{|h|^{\frac 13}}<\infty,\qquad 
		D^h m(x)= (m(x+h)-m(x))\mathbf 1_{x,x+h\in\Omega}\, ,
	\end{align}
	for all $U\subset\subset\Omega$.
\end{prop}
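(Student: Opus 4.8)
The plan is to prove Proposition~\ref{p:conslaws} by a commutator/compensated-compactness argument, exploiting the two given conservation laws \eqref{eq:divGamma0} together with the nondegeneracy $|\det(\partial_\theta^2\Gamma)|>0$. First I would reduce the claim to a pointwise algebraic statement about entropies: since $\det(\partial_\theta\Gamma)\equiv 0$, the two vectors $\partial_\theta\Gamma_1(m)$ and $\partial_\theta\Gamma_2(m)$ are always parallel, so there is a single direction field $\xi(\theta)\in\mathbb S^1$ with $\partial_\theta\Gamma_j=\beta_j(\theta)\,\xi(\theta)$, $j=1,2$, and the entropy condition $\partial_\theta\Phi=\alpha^1\partial_\theta\Gamma_1+\alpha^2\partial_\theta\Gamma_2$ simply says $\partial_\theta\Phi\parallel\xi$. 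Differentiating $|\partial_\theta\Gamma|=1$ and using $|\det(\partial_\theta^2\Gamma)|>0$ one sees that $\xi$ is an immersion onto an arc of $\mathbb S^1$ (after possibly cutting $J$ into finitely many pieces it is a diffeomorphism onto its image), which identifies the entropies, up to the trivial ones, with the classical Aviles--Giga / eikonal entropies in the variable $\xi$. Thus the statement becomes: the unit field $n:=\xi\circ m$ satisfies $\nabla\cdot\Phi_0(n)=0$ for the standard entropies $\Phi_0$ of $\nabla\cdot n=0$, $|n|=1$, and this should now follow once one knows $\nabla\cdot n=0$ plus enough Besov regularity, by the kinetic/velocity-averaging argument of \cite{DKMO01,JOP02}. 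The conservation law $\nabla\cdot n=0$ is exactly one of the two relations in \eqref{eq:divGamma0} (the one corresponding to whichever row of $\partial_\theta\Gamma$ has a non-vanishing coefficient $\beta_j$, with the other giving a redundant relation because the two rows are parallel).

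The main work, and the place where the $B^{1/3}_{p,\infty}$ hypothesis with $p>3$ enters, is then the bilinear estimate that upgrades ``$m\in B^{1/3}_{p,\infty,\loc}$ and the two linear conservation laws hold'' to ``all entropy productions vanish.'' I would run the by-now standard two-scale / paraproduct decomposition: mollify $m$ at scale $\delta$, write $\Phi(m)=\Phi(m_\delta)+(\Phi(m)-\Phi(m_\delta))$, use the chain rule for the smooth part $m_\delta$ so that $\nabla\cdot\Phi(m_\delta)$ is controlled by the commutator between $\Phi$ and mollification applied to \eqref{eq:divGamma0}, and estimate the difference part by the modulus of continuity. The key trilinear/commutator term is of the form $\int (m-m_\delta)^{\otimes 2}\nabla\rho_\delta$-type quantities, which by the fractional Leibniz / Constantin--E--Titi commutator estimate is bounded by $\delta^{-1}\|D^\delta m\|_{L^3}^3\lesssim \delta^{-1}(\delta^{1/3})^3=\delta^0$, i.e. it is \emph{bounded} in $\delta$; the borderline exponent $1/3$ is precisely what makes this term not blow up, and the extra integrability $p>3$ (rather than $p=3$) is used to get a small gain — replacing $L^3$ control by $L^p$ with $p>3$ and interpolating yields a factor $\delta^{\kappa}$ with $\kappa=\kappa(p)>0$, so the commutator actually tends to $0$. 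Letting $\delta\to 0$ and using the two conservation laws to kill the remaining linear term then gives $\nabla\cdot\Phi(m)=0$ in $\mathcal D'$. This is the same mechanism as in the proof of the Onsager-type result for the eikonal equation, and it is robust enough to accommodate the fact that here the ``divergence-free vector field'' is $\Gamma_j(m)$ rather than $m$ itself — one only needs $\Gamma\in C^2$ so that the chain-rule commutator estimates apply.

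Some care is needed with two points. First, the passage from $\partial_\theta\Phi\parallel\xi$ to genuine eikonal entropies is only clean where $\xi$ is a local diffeomorphism; near points where $\partial_\theta\xi$ could degenerate one must check, using $|\partial_\theta^2\Gamma|$ and the relation $\det(\partial_\theta^2\Gamma)\ne0$, that in fact $\partial_\theta\xi\ne0$ everywhere on $J$ — indeed $\partial_\theta^2\Gamma_j=\beta_j'\xi+\beta_j\partial_\theta\xi$, and if $\partial_\theta\xi$ vanished at some point then $\partial_\theta^2\Gamma_1$ and $\partial_\theta^2\Gamma_2$ would both be parallel to $\xi$ there, forcing $\det(\partial_\theta^2\Gamma)=0$, a contradiction. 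So $\xi\colon J\to\mathbb S^1$ is an immersion, hence locally a diffeomorphism, and a covering/degree argument on the arc $J$ makes it a diffeomorphism onto its image (this also uses that $J$, being a compact connected subset of $\mathbb S^1$, is an arc or all of $\mathbb S^1$; the full-circle case is handled by localizing in $\theta$ and patching, the localization being legitimate because entropies supported in a small $\theta$-interval already span enough test functions). Second, one must confirm that $\beta_j\not\equiv0$, i.e. that $\partial_\theta\Gamma$ is genuinely nonzero — which is immediate from $|\partial_\theta\Gamma|=1$ — and pick the index $j$ realizing this to read off $\nabla\cdot n=0$ from \eqref{eq:divGamma0}. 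I expect the main obstacle to be bookkeeping in the commutator estimate: tracking the exact power of $\delta$ through the paraproduct and verifying that $p>3$ (as opposed to $p=3$) buys the strictly positive gain needed to pass to the limit — this is exactly the step the authors flag after their Question, where they note the method genuinely needs $B^{1/3}_{p,\infty}$ with $p>3$ as the starting regularity.
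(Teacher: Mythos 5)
There is a genuine gap, and it sits at exactly the point you identify as the main work. You claim that with $m\in B^{1/3}_{p,\infty,\loc}$ for $p>3$ the commutator remainder tends to zero because ``replacing $L^3$ control by $L^p$ with $p>3$ and interpolating yields a factor $\delta^\kappa$ with $\kappa>0$.'' This is not correct: better integrability at the \emph{same} smoothness exponent $1/3$ does not improve the power of $\delta$. On a bounded $U$ one has $\|D^\delta m\|_{L^3(U)}\lesssim |U|^{1/3-1/p}\|D^\delta m\|_{L^p(U)}\lesssim\delta^{1/3}$, exactly as for $p=3$, so the trilinear term is still $\lesssim\delta^{-1}(\delta^{1/3})^3=O(1)$: bounded, not vanishing. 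This matches Lemma~\ref{l:commut} with $\alpha=\beta=p/3$, $s=1/3$, which gives exponent $s\,\min(p,2\alpha+\beta)-\beta = p/3-p/3=0$. What $p>3$ actually buys is $p/3>1$, so that the commutator remainder has a weakly convergent subsequence in $L^{p/3}_{\loc}$ — it does \emph{not} make the remainder small. The paper therefore has to run a second, genuinely separate step: from the commutator estimates one only extracts the identity $\nabla\cdot\Phi(m)=\sum_j\partial_\theta\alpha^j_\Phi(m)\,im\cdot f_j$ with $\Phi$-independent weak limits $f_1,f_2\in L^{p/3}_{\loc}$ (Proposition~\ref{p:Lp3}); then one tests this with two well-chosen entropies satisfying $\partial_\theta\Phi=\lambda_1^2\lambda_2\Psi$ and $\partial_\theta\overline\Phi=\lambda_2^2\lambda_1\Psi$ and computes the determinant of the resulting $2\times2$ linear system in $(im\cdot f_1,im\cdot f_2)$ to be $-|\partial_\theta\lambda|^2<0$, forcing $im\cdot f_j=0$. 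It is only here that $\det(\partial_\theta^2\Gamma)\neq 0$ enters (it gives $|\partial_\theta\lambda|>0$ via Lemma~\ref{l:lambdaPsi}); your sketch never uses that hypothesis in the commutator step, which is a second signal that the argument as written would not close.

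A further problem is that your reduction to the eikonal field $n=\xi\circ m$ is circular. You assert that $\nabla\cdot n=0$ is ``exactly one of the two relations'' in \eqref{eq:divGamma0}, but this is not so: $\partial_\theta(i\Psi)$ and $\partial_\theta\Gamma_j$ are parallel (both perpendicular to $i\Psi$), yet $i\Psi$ and $\Gamma_j$ need not differ by a constant, so $\nabla\cdot\Gamma_j(m)=0$ does not yield $\nabla\cdot\tilde m=0$. In the paper, the eikonal conservation law for $\tilde m=i\Psi(m)$ (and all eikonal entropy productions) are \emph{consequences} of the conclusion of Proposition~\ref{p:conslaws}, via Lemmas~\ref{l:conslawsC1} and \ref{l:entropiesAG}; you cannot assume them while trying to prove that conclusion. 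And even if one were handed $\nabla\cdot n=0$, $|n|=1$, $n\in B^{1/3}_{p,\infty}$ for $p>3$, there is no known argument deducing that all eikonal entropy productions vanish — this is an open Onsager-type problem, settled in \cite{DLI15} only under the strictly stronger $B^{1/3}_{3,c_0}$ (i.e.\ $o(|h|^{1/3})$) hypothesis. The whole point of the two-step argument above is to use the extra conservation law to get around exactly this obstruction.
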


A less general version of the above proposition also lies at the heart of our previous work \cite{LLP20},
 in a special case where $\partial_\theta \Gamma(e^{i\theta})=\lambda(e^{i\theta})\otimes ie^{i\theta}$ for some $\lambda\colon\mathbb S^1\to\mathbb S^1$, and the identity mapping is an entropy:
 $\mathrm{id}_{\mathbb S^1}\in \mathrm{ENT}_\Gamma$.
Here our proof follows a similar road map, but requires new ingredients to deal with  the more general setting.

The basic principle is as follows. 
In order to apply the chain rule, we consider a mollified map $m_\e$, 
but this destroys the nonconvex constraint $|m|=1$:
the identities showing that  entropy productions vanish for a smooth $m$ are not valid for $m_\e$. 
Our task consists in proving that the error terms thus introduced are negligible as $\e\to 0$.
A now standard tool for these kinds of problems is the commutator estimates of  \cite{CET94}.
Here they
play an important role, 
but,
 contrary to \cite{DLI15},
  they are far from enough to conclude directly: 
they only serve to show that
entropy productions are in $L^{p/3}_{\mathrm{loc}}$.
As in \cite{LLP20}, we then need to bootstrap that information into eventually obtaining that entropy productions vanish.
A crucial feature in \cite{LLP20} was the special role played by the identity mapping: an entropy that can be extended to a linear mapping of $\R^2$.
Here we do not have this structure in general and rely on a different argument.
A well-designed decomposition and careful use of commutator estimates
 enable us to obtain  identities relating entropy productions and weak limits of some error terms, see \eqref{eq:dvPhif1f2}.
 Testing these identities with well-chosen entropies 
shows that the error terms vanish.

\begin{rem} 
A key observation in \cite{LLP20}
was that different choices of extensions of entropies might provide different information in the limit, but
our new argument allows us to use only classical radial extension, and  provides a simpler proof of the main result in \cite{LLP20}.
\end{rem}

\subsection{Plan of the article}

\color{black}
In \textsection~\ref{s:besov} we establish the initial low fractional regularity which follows from the nondegeneracy estimate \eqref{eq:det4}.
In \textsection~\ref{s:proof_PE} we give the proof of Theorem~\ref{t:PE} reducing our main result to the nowhere elliptic case.
Then we focus on nowhere elliptic curves.
In \textsection~\ref{s:det4NE} we establish the equivalent characterization of the nondegeneracy estimate \eqref{eq:det4} mentioned in Remark~\ref{r:nondegen_NE}.
In \textsection~\ref{s:Psi} we give an explicit expression of the map $\Psi$ defined in \textsection~\ref{ss:NE_closed_intro}.
In \textsection~\ref{s:conslaws} we prove Proposition~\ref{p:conslaws}.
In \textsection~\ref{s:proofNE} we give the 
proofs of Theorems~\ref{t:NE} and \ref{t:NE_bdry}.
In \textsection~\ref{s:ex_NE} we provide examples of closed nowhere elliptic curves and the proof of Proposition~\ref{p:NE}.
In Appendix~\ref{a:rig} we give the extension, to curves with boundaries, of a rigidity estimate proved in \cite{LLP23} for closed curves.
\color{black}

\bigskip
\noindent \bf  Acknowledgments. \rm

We thank Riccardo Tione for showing us the short proof of Proposition~\ref{p:reg_ellipt} that we present here, and for several fruitful discussions.
{\color{black}
We thank the anonymous referee for suggesting many substantial improvements in the structure of the article and pointing out \cite[Theorem~6.5]{BM21}.
}
 XL received support from ANR project ANR-22-CE40-0006. 
AL gratefully acknowledges the support of the Simons foundation, collaboration grant \#426900. GP was supported in part by NSF grant DMS-2206291. Part of this work
was conducted during XL’s stays at  the Hausdorff Institute for Mathematics (HIM) in Bonn,
funded by the Deutsche Forschungsgemeinschaft (DFG, German Research Foundation)
under Germany’s Excellence Strategy – EXC-2047/1 – 390685813, during the Trimester Program “Mathematics for Complex Materials”.

\section{Fractional regularity of $m$} 
\label{s:besov}
 
In this section we establish the initial low fractional regularity which is required to apply Proposition~\ref{p:conslaws}.
Since $\Pi$ has no rank-one connections, \cite[Lemma 1]{sverak93} implies
 that
 $\det(A-B)$ has a constant sign over all matrices $ A\neq B \in \Pi$. 
Composing $u$ with a reflection does not affect Theorem~\ref{t:reg_gen}, so we may assume without loss of generality
\begin{align*}
\det(A-B)>0\qquad\forall A\neq B\in\Pi,
\end{align*}
and the coercivity of the determinant \eqref{eq:det4} can be rephrased as
\begin{align*}
\det(A-B)\geq c \, |A-B|^4 \qquad\forall A, B\in\Pi.
\end{align*}
Quite classically,
this
implies some fractional regularity for the map $m$ 
defined in \eqref{eq:cofDuGammam}, 
which satisfies $Du=\mathrm{cof}\,\Gamma(m)$. 
That fractional regularity follows from 
directly adapting the proof of \cite[Theorem~5]{sverak93}.
This adaptation has already been done in \cite[Proposition~1]{FK},
where the conclusion is written in terms of Sobolev regularity.
For the reader's convenience we include here the statement in
terms of Besov regularity and reproduce its proof.

\begin{lem}\label{l:besov}
Let $J\subset\mathbb S^1$ be compact and connected. If 
$m\colon\Omega\to J$
 solves \eqref{eq:divGamma0} where $\Gamma\in C^1(J;\R^{2\times 2})$  satisfies
\begin{align}\label{eq:pcoercivity}
\det(\Gamma(z)-\Gamma(z'))\geq c |z-z'|^p\qquad\forall z,z'\in J,
\end{align}
for some $p> 1$ and $c>0$,
then $m\in B^{\frac{1}{p-1}}_{p,\infty,\mathrm{loc}}(\Omega;\mathbb S^1)$, that is,
\begin{align*}
\sup_{|h|\leq 1}\frac{\|D^h m\|_{L^p(U)}}{|h|^{\frac 1{p-1}}}<\infty\qquad\text{for all }U\subset\subset\Omega,
\end{align*}
where $D^hm(x)=(m(x+h)-m(x))\mathbf 1_{x,x+h\in\Omega}$\ .
In particular, if $p=4$ then $m\in B^{\frac{1}{3}}_{4,\infty,\mathrm{loc}}$.
\end{lem}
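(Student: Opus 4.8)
The plan is to derive the fractional bound from a quantitative control of the translate $\det\bigl(\Gamma(m(\cdot+h))-\Gamma(m(\cdot))\bigr)$, which inherits a div--curl (null Lagrangian) structure from the two conservation laws \eqref{eq:divGamma0}, followed by a self‑improving iteration. First, the coercivity \eqref{eq:pcoercivity} applied with $z=m(x+h)$, $z'=m(x)$ gives, a.e.\ on $\{x,x+h\in\Omega\}$,
\[
c\,|D^hm(x)|^p\le\det\bigl(\Gamma(m(x+h))-\Gamma(m(x))\bigr)=\det\bigl(D^h\Gamma_1(m)(x),\,D^h\Gamma_2(m)(x)\bigr),
\]
and the right‑hand side is nonnegative. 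Hence it suffices to bound $\int_U\det\bigl(D^h\Gamma_1(m),D^h\Gamma_2(m)\bigr)\,dx\le C|h|^{p/(p-1)}$ for every $U\subset\subset\Omega$.

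Second, I would use \eqref{eq:divGamma0} locally: on a ball $B'\subset\subset\Omega$ (simply connected), $\nabla\cdot\Gamma_j(m)=0$ yields $\Gamma_j(m)=\nabla^\perp\psi_j$ with $\psi_j\in W^{1,\infty}$, $\|\nabla\psi_j\|_{L^\infty}\le\max_J|\Gamma_j|$ (since $|\nabla^\perp\psi_j|=|\Gamma_j(m)|$). For $|h|$ small this gives $D^h\Gamma_j(m)=\nabla^\perp g_j$ with $g_j:=D^h\psi_j$, so by rotation invariance of the $2\times2$ determinant and the null‑Lagrangian identity,
\[
\det\bigl(D^h\Gamma_1(m),D^h\Gamma_2(m)\bigr)=\det(\nabla g_1,\nabla g_2)=\nabla\cdot\bigl(g_1\nabla^\perp g_2\bigr)\quad\text{in }\mathcal D'(B').
\]
Testing against a cutoff $\chi\in C_c^\infty(B')$ with $\chi\equiv1$ on $U$, integrating by parts, and using $|g_1|\le(\max_J|\Gamma_1|)\,|h|$, $|\nabla^\perp g_2|=|D^h\Gamma_2(m)|\le\mathrm{Lip}(\Gamma_2)\,|D^hm|$ together with Hölder's inequality, I obtain the recursive inequality
\[
\|D^hm\|_{L^p(V)}^p\le C_\Gamma\,\|\nabla\chi\|_\infty\,|h|\;\|D^hm\|_{L^p(V')},\qquad V\subset\subset V'\subset\subset\Omega,
\]
with $C_\Gamma$ depending only on $\Gamma$ and $|V'|$.

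Third comes the bootstrap, which I expect to be the main obstacle. Fix $V\subset\subset\Omega$, set $d=\tfrac12\dist(V,\partial\Omega)$ and choose nested domains $V=V_0\subset\subset V_1\subset\subset\cdots\nearrow\{\dist(\cdot,V)<d\}\subset\subset\Omega$ with cutoffs between $V_k$ and $V_{k+1}$ of size $\|\nabla\chi_k\|_\infty\le A2^k/d$. Iterating the recursive inequality $n$ times and using $\|D^hm\|_{L^p(V_n)}\le2|V_\infty|^{1/p}$ (since $|m|\equiv1$) gives
\[
\|D^hm\|_{L^p(V)}\le\Bigl(\prod_{k=0}^{n-1}\bigl(C_\Gamma A2^k/d\bigr)^{p^{-(k+1)}}\Bigr)\,|h|^{\sum_{j=1}^np^{-j}}\,\bigl(2|V_\infty|^{1/p}\bigr)^{p^{-n}}.
\]
A naive iteration reaches only each exponent $<1/(p-1)$, with constants diverging at the endpoint; the key point is that the growing cutoff constants $2^k$ enter the $n$‑fold iterate with the summable weights $p^{-k}$, so the product is bounded uniformly in $n$ because $\sum_{k\ge0}k\,p^{-k}<\infty$ for $p>1$. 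Letting $n\to\infty$ with $h$ fixed then makes $|h|^{\sum_{j=1}^np^{-j}}\to|h|^{1/(p-1)}$ and $(2|V_\infty|^{1/p})^{p^{-n}}\to1$, so $\|D^hm\|_{L^p(V)}\le C\,|h|^{1/(p-1)}$ for $|h|\le1$; the regime $|h|$ bounded away from $0$ is immediate from $\|D^hm\|_{L^p(V)}\le2|V|^{1/p}$. The remaining ingredients — existence of the stream functions $\psi_j$ on simply connected subdomains, justification of the null‑Lagrangian identity for $W^{1,\infty}$ maps, and handling the truncation $\mathbf 1_{x,x+h\in\Omega}$ near $\partial\Omega$ — are routine.
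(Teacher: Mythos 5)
Your proof is correct, and while the first two steps (potentials/stream functions for $\Gamma_j(m)$, and the null-Lagrangian identity giving the rewriting of the determinant of increments) run parallel to the paper's argument, the final step diverges in an interesting way. The paper tests against the nonlinear weight $\chi^{p/(p-1)}$: after one integration by parts (using again $\nabla\cdot\Gamma_2(m)=0$), the factor $\nabla(\chi^{p/(p-1)})=\tfrac{p}{p-1}\chi^{1/(p-1)}\nabla\chi$ leaves behind precisely $\chi^{1/(p-1)}$, which is the conjugate power needed so that H\"older reproduces the quantity $X:=\int\chi^{p/(p-1)}|D^hm|^p$ on the right; this yields the closed inequality $X\lesssim |h|\,X^{1/p}$ and the conclusion in a single step, with no iteration. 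You instead test against a flat cutoff, which gives the open inequality $\|D^hm\|_{L^p(V)}^p\lesssim\|\nabla\chi\|_\infty|h|\,\|D^hm\|_{L^p(V')}$ with $V'\supset\supset V$, and then must run a geometric bootstrap on shrinking domains; this is salvageable exactly as you observe, because the cutoff constants $2^k/d$ enter the $n$-fold iterate with exponents $p^{-(k+1)}$, and $\sum_k k\,p^{-k}<\infty$ for $p>1$ keeps the infinite product finite while the exponent of $|h|$ converges to $\sum_{j\ge1}p^{-j}=\tfrac1{p-1}$. Both routes are valid; the paper's weight trick buys a one-shot proof, while yours is more elementary term-by-term but pays for it with the Moser-type iteration. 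One small point of care that you implicitly handle correctly: the coercivity \eqref{eq:pcoercivity} guarantees the determinant of increments is nonnegative pointwise, which is what lets you pass from $\int\chi|D^hm|^p$ to $\int\chi\det(\cdots)$ with a nonnegative $\chi$; and the nested domains $V_k$ should all sit inside a fixed $V_\infty\subset\subset\Omega$ so that the restriction $|h|<\dist(V_\infty,\partial\Omega)$ can be imposed once, independently of $k$, which you do.
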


\begin{proof}[Proof of Lemma~\ref{l:besov}]
The proof 
is essentially the same as the first step in \cite[Theorem~5]{sverak93}.

Since this is a local result, we may assume without loss of generality that $\Omega$ is simply connected.
 Since $\na\cdot\Gamma_j(m)=0$, we infer that $\mathrm{curl}\lt(i\Gamma_j(m)\rt)=0$, and thus there exists $F_j:\Omega\rightarrow \R$ with 
	\begin{equation}\label{eq13}
		\nabla F_j=i\Gamma_j(m)\qquad\text{a.e. in }\Omega. 
	\end{equation}
	For any given $U\subset\subset\Omega$ and $h\in\R^2$ with $|h|$ sufficiently small, e.g. $|h|<\frac{1}{3}\mathrm{dist}(U,\partial\Omega)$, by \eqref{eq:pcoercivity} we have
	\begin{equation*}
		\det \lt(\Gamma(m(x+h))-\Gamma(m(x))\rt)\geq c|D^hm(x)|^p
	\end{equation*}
	for a.e. $x\in\Omega$ with $\dist(x,\partial\Omega)>|h|$. By \eqref{eq13} we have
	\begin{equation*}
		\det \lt(\Gamma(m(\cdot +h))-\Gamma(m(\cdot))\rt) = (iD^h\Gamma_1(m))\cdot ( D^h\Gamma_2(m))=D^h \nabla F_1\cdot D^h\Gamma_2(m).
	\end{equation*}
	Hence gathering the two above equations, we obtain
	\begin{equation}\label{eq:detcontrolsDhm}
		\abs{D^h m}^p\lesssim D^h \nabla F_1\cdot D^h\Gamma_2(m) \qquad\text{ for a.e. }x\in\Omega\text{ with }\mathrm{dist}(x,\partial\Omega)>|h|.
	\end{equation}
	Let $\chi\in C_c^\infty(\Omega)$ be a test function with 
	$\mathrm{dist}(\supp\chi,\partial\Omega)>2h$ and 
	$\mathbf 1_U\leq\chi\leq\mathbf 1_{\Omega}$. 
	Integrating by parts and using that $\na\cdot\Gamma_2(m)=0$ (and thus $\int_{\Omega} \na \lt(D^h F_1 \chi^{\frac{p}{p-1}} \rt)\cdot D^h \Gamma_2(m) dx=0$), we have
	\begin{align*}
		&\int_\Omega \chi^{\frac{p}{p-1}} D^h \nabla F_1 \cdot D^h\Gamma_2(m) \, dx
		=- \int_\Omega D^h F_1\,  D^h\Gamma_2(m) \cdot \nabla (\chi^{\frac{p}{p-1}})\, dx\\
		&\qd\qd\qd\qd\qd\qd\qd\qd\lesssim \abs{h}\,\norm{\na F_1}_{L^\infty(\Omega)} \|\partial_\theta \Gamma_2\|_{L^{\infty}(J)} \norm{\nabla\chi}_{L^\infty(\Omega)} \int_\Omega \chi^{\frac{1}{p-1}}\abs{D^h m}\,dx \\
		&\qd\qd\qd\qd\qd\qd\qd\qd\lesssim \abs{h}\,\norm{\nabla\chi}_{L^\infty(\Omega)} \left(\int_\Omega \chi^{\frac{p}{p-1}} \abs{D^h m}^p \, dx\right)^{\frac 1p}.
	\end{align*}
	Recalling \eqref{eq:detcontrolsDhm} we deduce
	\begin{equation*}
		\int_\Omega \chi^{\frac{p}{p-1}}\abs{D^h m}^p\, dx \lesssim \abs{h}\,\norm{\nabla\chi}_{L^\infty(\Omega)} \left(\int_\Omega \chi^{\frac{p}{p-1}} \abs{D^h m}^p \, dx\right)^{\frac 1p},
	\end{equation*}
	and thus
	\begin{equation*}
		\left(\int_\Omega \chi^{\frac{p}{p-1}} \abs{D^h m}^p \, dx\right)^{\frac 1p}\lesssim \abs{h}^{\frac{1}{p-1}}\,\norm{\nabla\chi}_{L^\infty(\Omega)}^{\frac{1}{p-1}}.
	\end{equation*}
	As $\mathbf 1_U\leq\chi$,  it follows that
	\begin{equation*}
		\sup_{\abs{h}\leq t}\frac{\norm{D^h m}_{L^p(U)}}{|h|^{\frac{1}{p-1}}}\lesssim\norm{\nabla\chi}_{L^\infty(\Omega)}^{\frac{1}{p-1}}
	\end{equation*}
	for $t>0$ sufficiently small. For larger $|h|$ values, the boundedness of $m$ implies  that $\sup_{t<\abs{h}\leq 1}|h|^{-\frac{1}{p-1}}\norm{D^h m}_{L^p(U)}$ is bounded. Thus $m\in B^{\frac{1}{p-1}}_{p,\infty}(U)$ for all $U\subset\subset\Omega$.
\end{proof}

 \section{\color{black}Reduction to the nowhere elliptic case: proof of Theorem~\ref{t:PE}}
 \label{s:proof_PE}

\subsection{Regularity at elliptic values}
\label{ss:regE}

In this section we prove part $(i)$ of Theorem~\ref{t:PE}, 
that 
$Du\in\Pi_{NE}$ or $Du$ is constant.
For this part we do not need to assume the quartic estimate \eqref{eq:det4}, but only the fact that $\Pi$ has no rank-one connections. 
Recall from the introduction that we
assume without loss of generality that $\Pi$ has length 
at most $2\pi$, and $\gamma\colon  I \to\Pi$ is an arc-length parametrization of class $C^2$,
where $I$ is either a segment $[a,b]\subset\R$ ($a<b<a+2\pi$) in the case with boundary, or $I=\R/2\pi\Z$ in the case without boundary.

\begin{prop}\label{p:reg_ellipt}
Let $\Pi\subset\R^{2\times 2}$ be a  
compact connected  $C^2$ curve of length 
at most $2\pi$, without rank-one connections. Denote by $\gamma\in C^2(I;\R^{2\times 2})$ an arc-length parametrization of $\Pi$,
and by $\Pi_E=\gamma(\lbrace \det(\gamma')\neq 0\rbrace)\subset\Pi$ the subset of elliptic values.
For any connected open set $\Omega\subset\R^2$ and weakly differentiable map $u\colon\Omega\to\R^2$, if $u$ solves the differential inclusion
\begin{align*}
Du\in\Pi=\gamma(I)\qquad\text{a.e. in }\Omega,
\end{align*}
and $\Omega_E=(Du)^{-1}(\Pi_E)$ has positive Lebesgue measure, then 
 $Du$ is constant.
\end{prop}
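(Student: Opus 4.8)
The plan is to exploit the classical fact that on the elliptic part $\Pi_E$, the differential inclusion behaves like a (genuinely) elliptic system, so a unique continuation / rigidity mechanism forces $Du$ to be locally constant wherever it takes elliptic values, and then use connectedness of $\Omega$ together with the absence of rank-one connections to propagate this constancy to all of $\Omega$. First I would observe that $\Pi_E$ is a relatively open subset of $\Pi$ and is itself an elliptic $C^2$ curve (possibly disconnected, with boundary): on the open set $\lbrace\det\gamma'\neq 0\rbrace\subset I$ the estimate $|\det(A-B)|\geq c|A-B|^2$ holds locally along $\Pi_E$ near a fixed point, and near such a point \v{S}ver\'ak's regularity theory for elliptic two-by-two inclusions applies. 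More precisely, fix $x_0\in\Omega_E$ a point of approximate continuity of $Du$ with $Du(x_0)=\gamma(t_0)$, $\det\gamma'(t_0)\neq 0$; on a small ball $B=B_r(x_0)$ one can, after an affine change of variables, view $u$ as a $W^{1,\infty}$ solution of a differential inclusion into a small elliptic arc, hence $u\in C^{1,\alpha}$, in fact analytic in $B$ by \cite[\S 5]{sverak93}; then $Du\colon B\to\Pi$ is analytic with values in a curve.

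The key step is then: an analytic map $v=Du\colon B\to\R^{2\times 2}$ whose image lies in a $C^2$ curve $\Pi$ without rank-one connections, and which additionally satisfies the constraint $\na\cdot\mathrm{cof}\,Du=0$ coming from $u$ being a gradient, must be constant on $B$. Writing $Du=\gamma(\theta)$ for an analytic scalar function $\theta$ on $B$, the curl-free and the $\na\cdot\mathrm{cof}$ conditions become a first-order system for $\theta$; because $\gamma'(t_0)$ is not rank-one, the two rows of $\mathrm{cof}\,\gamma'(t_0)$ together with $\gamma'(t_0)$ span enough directions that the system forces $\na\theta=0$ at $x_0$, and by analyticity $\na\theta\equiv 0$ on $B$ (this is the unique continuation input; alternatively one may invoke directly \cite{DPGT23} or \cite[Theorem~1.9]{lledos23} as the paper suggests, but the analytic argument is self-contained). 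Hence $Du$ is locally constant near every point of $\Omega_E$ where $Du$ is approximately continuous, i.e. $Du$ is locally constant on an open full-measure subset of $\Omega_E$; since a locally constant $W^{1,\infty}$ map with connected domain is constant, $Du\equiv A_0\in\Pi_E$ on the interior of $\Omega_E$.

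To finish I would propagate: let $\Omega_0$ be the (open) set of points having a neighborhood on which $Du\equiv A_0$. We have shown $\mathrm{int}(\Omega_E)\subset\Omega_0\neq\emptyset$. The complement $\Omega\setminus\Omega_0$ is closed in $\Omega$; I claim it is also open, which by connectedness of $\Omega$ gives $\Omega_0=\Omega$ and hence $Du\equiv A_0$ constant. For openness, suppose $x_1\in\partial\Omega_0\cap\Omega$; on one side $Du=A_0$, and I want to rule this boundary out. Here the absence of rank-one connections is decisive: the standard one-dimensional rigidity for gradients (a $W^{1,\infty}$ map that equals an affine map on a set of positive density, with gradient confined to a set with no rank-one connections, cannot ``jump'') forces $Du=A_0$ in a full neighborhood of $x_1$, so $x_1\in\Omega_0$, a contradiction. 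The main obstacle I anticipate is precisely the step that turns local $C^{1,\alpha}$/analyticity of $u$ near an elliptic value into constancy of $Du$: making rigorous that the gradient constraint plus the non-rank-one curve geometry kills all derivatives of $\theta$, i.e. proving the unique continuation statement without simply quoting \cite{DPGT23,lledos23}. Everything else --- \v{S}ver\'ak's interior regularity, the measure-theoretic passage from ``a.e.'' to ``on an open dense set'', and the connectedness propagation --- is routine.
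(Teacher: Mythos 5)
Your proposal has a genuine gap at the localization step, and the paper's proof is designed precisely to avoid it. You fix a point $x_0\in\Omega_E$ of approximate continuity and claim that ``on a small ball $B_r(x_0)$ one can, after an affine change of variables, view $u$ as a $W^{1,\infty}$ solution of a differential inclusion into a small elliptic arc.'' Approximate continuity at $x_0$ gives only that $|\lbrace x\in B_r(x_0): Du(x)\notin\mathcal J\rbrace|/|B_r(x_0)|\to 0$ as $r\to 0$; it does not exclude that $Du$ takes values far away on $\Pi$ (in particular in the non-elliptic part $\Pi_{NE}$) on a set of small but positive measure in every ball. So you have no right to restrict the inclusion to an elliptic arc, and the subsequent appeal to \v Sver\'ak's regularity theorem --- which requires the ellipticity estimate on the whole target set of the inclusion, not just near one point --- is unjustified. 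Ruling out such excursions is exactly what unique continuation must accomplish; assuming it away in the localization step begs the question. The paper's footnote on the alternative route via \cite{lledos23} points to the same obstruction: making a localized argument rigorous requires a nontrivial Kirszbraun-type extension lemma.

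The paper circumvents this by not localizing the inclusion at all: using ellipticity of the small arc $\mathcal J$ together with the \emph{absence of rank-one connections in all of $\Pi$}, one obtains the one-sided estimate $|A-B|^2\leq K\det(A-B)$ for $A\in\mathcal J$ and $B\in\Pi$, i.e. $\Pi\subset\mathcal E_{\mathcal J}$, the quasiconformal envelope of $\mathcal J$. This turns $Du\in\Pi$ into the global inclusion $Du\in\mathcal E_{\mathcal J}$, to which the unique continuation theorem of \cite{DPGT23} applies directly and yields the dichotomy ``$\dist(Du,\mathcal J)>0$ a.e.\ or $Du$ constant,'' settling the matter in one stroke. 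This also makes your Step~4 (propagation by open-and-closedness) unnecessary; incidentally, the ``standard one-dimensional rigidity'' you invoke there --- that a gradient confined to a rank-one-connection-free set cannot jump across a hypersurface --- is not a standard fact at the regularity level available to you and would itself require proof. A final small confusion: in Step~3 the relevant hypothesis is $\det\gamma'(t_0)\neq 0$ (ellipticity, i.e.\ $\gamma'(t_0)$ has rank two), not ``$\Pi$ has no rank-one connections'' (a condition on chords, not tangents); the two are logically independent, and the curl-free constraint forces $\nabla\theta=0$ only at elliptic values.
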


\begin{proof}[Proof of Proposition~\ref{p:reg_ellipt}]
The proof we present here, 
based on the notion of quasiconformal envelope \cite{FS08,KS08}
and a recent unique continuation result of \cite{DPGT23}, 
has been pointed out to us by Riccardo Tione.
(Another possible proof, based on \cite[Theorem~1.9]{lledos23}, 
would require a refined, and rather technical to demonstrate, version of Kirszbraun's extension theorem.)

In a nutshell, the argument is as follows.
First, 
the curve $\Pi$ is included in the quasiconformal envelope $\mathcal E_{\mathcal J}$ of any nontrivial
compact elliptic arc $\mathcal J\subset\Pi_{E}$, 
that is, there exists $K>0$ such that
\begin{align}\label{eq:PiquasiconvA0}
|A-B|^2 \leq K \det (A-B)\qquad\forall A\in \mathcal J\text{ and }B\in\Pi.
\end{align}
Second, \color{black}due to \cite{LLP23} and its adaption in Proposition~\ref{p:rig_est_J}, we know that
\color{black}
such elliptic arc satisfies a quantitative rigidity property as in \cite{FJM02}, that is,
there exists $c>0$ such that 
\begin{align}\label{eq:rigid_est_J}
\inf_{A\in \mathcal J}\int_{B_{1/2}} |Du-A|^2\, dx \leq c \int_{B_1} \dist^2(Du,\mathcal J)\, dx
\qquad\forall u\in W^{1,2}(B_1;\R^2).
\end{align}
We can therefore directly invoke \cite[Theorem~1.3]{DPGT23} which
ensures that $\dist(Du,\mathcal J)$ is either positive a.e. or identically zero.

Now we proceed to give a more detailed proof.
As in the beginning of \textsection~\ref{s:besov} we assume without loss of generality that
\begin{align}\label{eq:detposit}
\det(A-B)>0\qquad\forall A\neq B \in \Pi.
\end{align}
We assume that $\Omega_E$ has positive Lebesgue measure: then we can pick a Lebesgue point $x_0$ of $Du$ such that $Du(x_0)\in \Pi_E$.
Since $\Pi_E$ is open in $\Pi$, 
for every neighborhood of $Du(x_0)$ in $\Pi_E$, its preimage
 by $Du$ has positive Lebesgue measure. 
Therefore we may fix $a'<b'<a'+2\pi$ such that 
$\det(\gamma')\neq 0$ on $[a',b']\subset I$ and, letting 
\begin{align*}
\mathcal J=\gamma([a',b'])\subset\Pi_E,
\end{align*}
we have that 
\begin{align}\label{eq:distJnonzero}
\dist(Du,\mathcal J)^{-1}(\lbrace 0\rbrace)\text{ has positive Lebesgue measure.}
\end{align}
For every $t_0\in [a',b']$ and $h\to 0$, we have
\begin{align*}
\det(\gamma(t_0+h)-\gamma(t_0))
=h^2 \det(\gamma'(t_0)) +o(h^2).
\end{align*}
According to \eqref{eq:detposit} this implies in particular  $\det(\gamma')>0$ on $[a',b']$, 
and by compactness of $[a',b']$ we deduce the existence of $\delta_0>0$ and $K_0>0$ such that
\begin{align*}
|A-B|^2 \leq K_0 \det (A-B)\qquad\forall A\in \mathcal J\text{ and }B\in\Pi \text{ s.t. }|A-B|\leq \delta_0.
\end{align*}
Thanks to \eqref{eq:detposit} we have
\begin{align*}
c_0 :=\min \left\lbrace \frac{\det(A-B)}{|A-B|^2}\colon A\in \mathcal J,\,B\in\Pi,\, |A-B|\geq \delta_0\right\rbrace >0,
\end{align*}
and setting $K=\max(K_0,1/c_0)$ we deduce 
the inequality \eqref{eq:PiquasiconvA0}.
In the terminology of \cite{FS08,KS08}, this means that 
the curve $\Pi$ is included in the $K$-quasiconformal envelope $\mathcal E_{\mathcal J}$ 
 of the arc $\mathcal J \subset \R^{2\times 2}$.
Here, as in \cite{DPGT23}, we omit the dependence on the fixed $K>0$ from the notation.
Therefore the differential inclusion $Du\in\Pi$ implies
\begin{align}\label{eq:diffinc_quasiconvA0}
Du\in \mathcal E_{\mathcal J}\qquad\text{a.e. in }\Omega.
\end{align}
Moreover, the inequality \eqref{eq:PiquasiconvA0} for $A,B\in \mathcal J$ 
means that 
$\mathcal J$ is \emph{elliptic} 
 and satisfies therefore 
 the \emph{rigidity estimate}
 \eqref{eq:rigid_est_J}.
This follows from a minor adaptation of \cite{LLP23} presented in Appendix~\ref{a:rig}.
As a consequence, we can apply the unique continuation principle of \cite[Theorem~1.3]{DPGT23} to the differential inclusion \eqref{eq:diffinc_quasiconvA0}, and deduce that either $\dist(Du,\mathcal J)>0$ a.e. in $\Omega$, or $\dist(Du,\mathcal J)=0$ a.e. in $\Omega$ and $Du$ is constant.
The first possibility is ruled out by \eqref{eq:distJnonzero}, so we conclude that $Du$ is constant equal to $Du(x_0)$.
\end{proof}

\subsection{Improvement under low fractional regularity}

\color{black}
In this section we give the short proof of part $(ii)$ of Theorem~\ref{t:PE},
which states that part $(i)$ can be improved to 
$Du$ being either constant or with values in a single connected component of $\Pi_{NE}=\gamma(\lbrace \det(\gamma')=0\rbrace)$,
under the nondegeneracy assumption \eqref{eq:det4}.

Thanks to Lemma~\ref{l:besov}, the assumption \eqref{eq:det4}
ensures indeed that $Du=\mathrm{cof}\:\Gamma(m)$ with 
$m\in B^{1/3}_{4,\infty,\mathrm{loc}}(\Omega)$.
The map $\Gamma$ being Lipschitz, we deduce that
$Du \in B^{1/3}_{4,\infty,\mathrm{loc}}(\Omega) \subset W^{2/7,4}_{\mathrm{loc}}(\Omega)$.
Since $(2/7)\cdot 4 >1$, 
this implies that the essential range of $Du$ 
(the smallest closed subset of $\R^{2\times 2}$ containing almost every value of $Du$) is connected, see e.g. \cite[Theorem~6.5]{BM21}.
As a consequence, if $Du$ is not constant,
then its essential range, 
which we know to be contained in the closed set $\Pi_{NE}$ 
 by Proposition~\ref{p:reg_ellipt},
 must be contained in a single connected component of $\Pi_{NE}$.
 \qed

\color{black}

\section{\color{black}The quartic nondegeneracy assumption \eqref{eq:det4} for nowhere elliptic curves}\label{s:det4NE}

As in the beginning of \textsection~\ref{s:besov}, we assume without loss of generality
\begin{align*}
\det(A-B)>0\qquad\forall A\neq B\in\Pi.
\end{align*}
Here we show that the nondegeneracy estimate \eqref{eq:det4} is equivalent, in the nowhere elliptic setting, to the assumption that $\det(\gamma'')$ does not vanish.

\begin{lem}\label{l:detcontrols4}
Let $I=[a,b]$ for some $a<b\leq a+2\pi$
and  $\gamma\in C^{2}(I;\R^{2\times 2})$. If   
$\det(\gamma')\equiv 0$ on $I$
 and
\begin{align}\label{eq:detpositive}
\det(\gamma(t)-\gamma(s))> 0\qquad\forall t\neq s\in I,
\end{align}
then
the estimate
\begin{align}\label{eq:detcontrols4}
\det(\gamma(t)-\gamma(s))\geq c |e^{it}-e^{is}|^4\qquad\forall t,s\in  I
\end{align}
is satisfied
if and only if $\det(\gamma'')$ does not vanish on $I$.
\end{lem}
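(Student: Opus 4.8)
The plan is to reduce both implications to a single uniform fourth-order expansion of the increment near the diagonal. Denote by $b(A,B) = \mathrm{tr}(A)\,\mathrm{tr}(B) - \mathrm{tr}(AB)$ the symmetric bilinear form on $\R^{2\times 2}$ polarizing the determinant, so that $\det(A+B) = \det A + b(A,B) + \det B$ and $b(A,A) = 2\det A$; in particular $\tfrac{d}{dt}\det\gamma'(t) = b(\gamma'(t),\gamma''(t))$. Starting from the second-order Taylor identity $\gamma(t+h)-\gamma(t) = h\gamma'(t) + r(t,h)$, with $r(t,h) = \int_0^h (h-s)\gamma''(t+s)\,ds = \tfrac{h^2}{2}\gamma''(t) + o(h^2)$ uniformly in $t$ (by uniform continuity of $\gamma''$ on the compact interval $I$), and expanding the determinant via $b$, I get
\[
\det(\gamma(t+h)-\gamma(t)) = h^2\det\gamma'(t) + h\,b(\gamma'(t),r(t,h)) + \det r(t,h).
\]
The first term vanishes because $\det\gamma'\equiv 0$; and, crucially, because $\det\gamma'\equiv 0$ \emph{on all of $I$} (not just at a single point), one also has $b(\gamma'(\tau),\gamma''(\tau)) = 0$ for every $\tau$, obtained by differentiating $\det\gamma'\equiv 0$.

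The one genuinely technical point — and the step I expect to be the main obstacle — is that $r(t,h)$ is controlled only to order $o(h^2)$, so $h\,b(\gamma'(t),r(t,h))$ is a priori merely $o(h^3)$, one power of $h$ short of what is needed. To recover that order I would write $b(\gamma'(t),r(t,h)) = \int_0^h (h-s)\,b(\gamma'(t),\gamma''(t+s))\,ds$ and use the identity $b(\gamma'(t+s),\gamma''(t+s)) = 0$ to rewrite the integrand as $b(\gamma'(t)-\gamma'(t+s),\gamma''(t+s))$; the extra factor $\gamma'(t)-\gamma'(t+s) = -s\gamma''(t) + o(|s|)$ (uniformly) then gives $b(\gamma'(t),\gamma''(t+s)) = -2s\det\gamma''(t) + o(|s|)$, whence $h\,b(\gamma'(t),r(t,h)) = -\tfrac{h^4}{3}\det\gamma''(t) + o(h^4)$, while $\det r(t,h) = \tfrac{h^4}{4}\det\gamma''(t) + o(h^4)$. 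Adding these produces the key expansion
\[
\det(\gamma(t+h)-\gamma(t)) = -\tfrac{h^4}{12}\,\det\gamma''(t) + h^4\,\tilde\epsilon(t,h), \qquad \sup_{t\in I}|\tilde\epsilon(t,h)| \longrightarrow 0 \ \text{ as } h\to 0.
\]
(The coefficient is consistent: differentiating $b(\gamma',\gamma'')\equiv 0$ twice yields $b(\gamma',\gamma''') = -b(\gamma'',\gamma'') = -2\det\gamma''$, which is $24$ times the $h^4$-coefficient; and for the curves $\gamma_k$ of Lemma~\ref{l:gamma_k} it reproduces $f(h)\sim \tfrac{(k+1)^2-1}{48}h^4$.) The real work of this step is keeping all the uniform-in-$t$ error terms under control, which is routine but somewhat lengthy.

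Granting the expansion, both implications are short. For ``only if'': if $\det\gamma''(t_0)=0$ for some $t_0\in I$, the expansion gives $\det(\gamma(t_0+h)-\gamma(t_0)) = o(h^4)$ as $h\to 0$ (taking $h$ of an admissible sign), contradicting the lower bound $\det(\gamma(t_0+h)-\gamma(t_0)) \ge c\,|e^{i(t_0+h)}-e^{it_0}|^4 = c\,(2|\sin(h/2)|)^4 \gtrsim |h|^4$ from \eqref{eq:detcontrols4}. For ``if'': evaluating the expansion at small $h\neq 0$ and using the positivity \eqref{eq:detpositive} forces $-\tfrac{1}{12}\det\gamma''(t)\ge 0$ for every $t$; combined with the hypothesis that $\det\gamma''$ does not vanish, and with compactness of $I$, this gives $\mu := \inf_{t\in I}(-\tfrac{1}{12}\det\gamma''(t)) > 0$. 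Choosing $\delta>0$ so that $|\tilde\epsilon(t,h)| \le \mu/2$ whenever $t\in I$ and $|h|\le\delta$, the expansion yields $\det(\gamma(t)-\gamma(s)) \ge \tfrac{\mu}{2}|t-s|^4 \ge \tfrac{\mu}{2}|e^{it}-e^{is}|^4$ for $|t-s|\le\delta$, using $|e^{it}-e^{is}|\le|t-s|$. On the complementary compact set $\{(t,s)\in I\times I : |t-s|\ge\delta\}$ the continuous function $\det(\gamma(t)-\gamma(s))$ is strictly positive by \eqref{eq:detpositive}, hence bounded below by some $\kappa>0$, while $|e^{it}-e^{is}|^4\le 16$; so $\det(\gamma(t)-\gamma(s))\ge \tfrac{\kappa}{16}|e^{it}-e^{is}|^4$ there. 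Taking $c = \min(\mu/2,\,\kappa/16)$ yields \eqref{eq:detcontrols4}.
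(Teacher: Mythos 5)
Your proof is correct and follows essentially the same route as the paper: you establish the uniform fourth-order expansion $\det(\gamma(t+h)-\gamma(t)) = -\tfrac{h^4}{12}\det\gamma''(t)+h^4\tilde\epsilon(t,h)$ with $\sup_t|\tilde\epsilon(t,h)|\to 0$, using the polarization identity for $\det$ and the observation that $b(\gamma',\gamma'')\equiv 0$ (which is precisely the paper's $\mathrm{cof}\,\gamma':\gamma''=(\det\gamma')'\equiv 0$, dressed in slightly different notation), and then both directions and the compactness argument are carried out the same way. One small point in your favor: your far-from-diagonal split by $|t-s|\ge\delta$ is marginally cleaner than the paper's split by $|e^{it}-e^{is}|\ge\eta$, since it avoids having to worry about the case $b=a+2\pi$ where $|e^{it}-e^{is}|$ can be small while $|t-s|$ is not.
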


\begin{rem}
The assumptions of Lemma~\ref{l:detcontrols4}
imply that $\det(\gamma'')<0$, see \eqref{eq:limdetover4}.
 The case $\det(\gamma'')>0$ would correspond to the  opposite estimate
	\begin{align*}
		-\det(\gamma(t)-\gamma(s))\geq c |e^{it}-e^{is}|^4\qquad\text{for some }c>0.
	\end{align*}
\end{rem}

\begin{proof}[Proof of Lemma~\ref{l:detcontrols4}]

First we prove that, for any $t\in I$,
\begin{align}\label{eq:limdetover4}
\lim_{s \to t, s\in I}\frac{\det(\gamma(s)-\gamma(t))}{(s-t)^4} =-\frac {1}{12} \det(\gamma''(t)).
\end{align}
Under the additional regularity assumption $\gamma\in C^3(I;\R^{2\times 2})$ this follows directly from a Taylor expansion.
Indeed,
derivating twice the identity $\det(\gamma')\equiv 0$, 
 we obtain
\begin{align*}
&\mathrm{cof}\,\gamma' \colon \gamma'' =0,\\
&
\mathrm{cof}\,\gamma' \colon \gamma^{(3)}=-\mathrm{cof}\,\gamma''\colon\gamma''=-2\det(\gamma''),
\end{align*}
and then 
 the Taylor expansion
\begin{align*}
&
\det(\gamma(t+h)-\gamma(t))
=\det\left(h\gamma'(t) +\frac  {h^2}{2} \gamma''(t) +\frac {h^3}{6}\gamma^{(3)}(t) + o(h^3)\right) \\
&
=
h^2\det(\gamma'(t))
+\frac{h^3}{2}\mathrm{cof}\,\gamma'(t)\colon\gamma''(t)
+\frac{h^4}{4}
\det(\gamma''(t)) + \frac{h^4}{6} \mathrm{cof}\,\gamma'(t)\colon\gamma^{(3)}(t) 
+o(h^4)\\
& =-\frac {h^4}{12}\det(\gamma''(t))+o(h^4),
\end{align*}
implies \eqref{eq:limdetover4}.

In the general case where  $\gamma\in C^2(I;\R^{2\times 2})$, 
one may actually use the bilinearity of the determinant to obtain the same outcome with only two derivatives. 
Specifically, for $t, t+h\in I$ we have
\begin{align}
\det(\gamma(t+h)-\gamma(t))
&
=\det\left(h\gamma'(t)+h^2\int_0^1(1-s)\gamma''(t+sh)\, ds\right)
\nonumber
\\
&
=h^2 \det(\gamma'(t))
+ h^3 \int_0^1 (1-s)\,
\mathrm{cof}\,\gamma'(t)\colon \gamma''(t+sh)\, ds
\nonumber
\\
&
\quad
+h^4 \det \left(\int_0^1
(1-s) \gamma''(t+sh)\, ds
\right).
\label{eq:det_incr_gamma_1}
\end{align} 
Using that $\mathrm{cof}\gamma'\colon\gamma''=(\det(\gamma'))'$, we can rewrite the penultimate term as
\begin{align*}
&
h^3\int_0^1 (1-s)\,
\mathrm{cof}\,\gamma'(t)\colon \gamma''(t+sh)\, ds
\\
&
=h^2\int_0^1 (1-s)\frac{d}{ds}[\det(\gamma'(t+sh)]\, ds
\\
&\quad
-
h^4 \int_0^1 s(1-s) \,\mathrm{cof} \left(\frac{\gamma'(t+sh)-\gamma'(t)}{sh}\right)\colon\gamma''(t+sh)\, ds
\\
&
=h^2\int_0^1\det(\gamma'(t+sh))\, ds -h^2\det(\gamma'(t)) -\frac{h^4}{3}  \det\gamma''(t) +h^4\e(t,h),
\end{align*}
where
\begin{align*}
\e(t,h)
&
=
\int_0^1 s(1-s)\,
 \Bigg[\mathrm{cof}\:\gamma''(t)\colon\gamma''(t)
 -
\mathrm{cof} \left(\frac{\gamma'(t+sh)-\gamma'(t)}{sh}\right)\colon\gamma''(t+sh)
 \Bigg]\, ds
 \\
&
=
\int_0^1\int_0^1 s(1-s)\,
 \Bigg[\mathrm{cof}\:\gamma''(t)\colon\gamma''(t)
 -
\mathrm{cof}\: \gamma''(t+\tau sh)\colon\gamma''(t+sh)
 \Bigg]\, d\tau\, ds,
\end{align*}
hence
 $\sup_t |\e(t,h)|\to 0$ as $h\to 0$ thanks to the uniform continuity of $\gamma''$.
Plugging this into \eqref{eq:det_incr_gamma_1} gives
\begin{align}\label{eq:incr_det_gamma}
\det(\gamma(t+h)-\gamma(t))
&
=h^2\int_{0}^1 \det(\gamma'(t+sh))\, ds
-\frac{h^4}{12}\det(\gamma''(t))
\nonumber
\\
&\quad
 +h^4\tilde\e(t,h),
\end{align}
where
\begin{align*}
\tilde \e(t,h)
&=
\e(t,h)
+\frac 12 \int_0^1(1-s)\:\mathrm{cof}\:\gamma''(t)\colon (\gamma''(t+sh)-\gamma''(t))\, ds
\\
&\quad
+\det\left(\int_0^1(1-s)(\gamma''(t+sh)-\gamma''(t))\, ds\right),
\end{align*}
so $\sup_t |\tilde \e(t,h)|\to 0$ as $h\to 0$ thanks again to the uniform continuity of $\gamma''$.
Since $\det(\gamma')\equiv 0$ on $I$, this implies \eqref{eq:limdetover4}.

A first consequence of \eqref{eq:limdetover4} is that the estimate \eqref{eq:detcontrols4} implies that $\det(\gamma'')$ does not vanish.

For the converse, assume that $\det(\gamma'')$ does not vanish.
Since the left-hand side of \eqref{eq:limdetover4} must be nonnegative according to \eqref{eq:detpositive}, we deduce that $\det(\gamma'')< 0$, and for all $t\in I$ there exists $\delta_t>0$ such that
\begin{align*}
\det(\gamma(t+h)-\gamma(t))\geq \frac{h^4}{24}\inf_{I} |\det(\gamma'')|
\qquad\forall h\in (-\delta_t,\delta_t), \,t+h\in I.
\end{align*}
By compactness of $I$ this $\delta_t$ can be chosen independent of $t$, so we have
\begin{align*}
\det(\gamma(t)-\gamma(s))\geq c |e^{it}-e^{is}|^4\qquad\text{if }|e^{it}-e^{is}|\leq\eta, \text{ for } s,t
\in I,
\end{align*} 
for some constants $c,\eta>0$ depending on $\gamma$.
Moreover by compactness of the set of couples $(t,s)\in I\times I$ such that $|e^{is}-e^{it}|\geq \eta$, there exists $c>0$ such that
\begin{align*}
\det(\gamma(t)-\gamma(s))\geq c \geq \frac{c}{2^4}|e^{it}-e^{is}|^4\qquad\text{if }|e^{it}-e^{is}|\geq\eta,
\end{align*}
and this proves Lemma~\ref{l:detcontrols4}.
\end{proof}

\section{Explicit form of the map $\Psi$ for nowhere elliptic curves}\label{s:Psi}

\color{black}
The different cases in the conclusion of Theorem~\ref{t:NE}
about nowhere elliptic closed curves
are due to different topological properties of the tangent bundle of the curve $\Pi$,
described by
 the map $\Psi\colon\mathbb S^1\to\mathbb{RP}^1$ defined at the start of \textsection~\ref{ss:NE_closed_intro}. 
In this section we
give a more explicit form of this map $\Psi$ (see Remark~\ref{r:Psi}), 
which will be convenient for the proofs.
\color{black}

\begin{lem}\label{l:lambdaPsi}
	Assume that $\gamma\in C^2(\R/2\pi\mathbb{Z};\R^{2\times 2})$ satisfies $|\gamma'(\theta)|=1$, $\det(\gamma'(\theta))=0$ and $\det(\gamma''(\theta))\ne 0$ for all $\theta\in\R$. Then there exist $\hat\lambda,\hat\Psi\in C^1(\R;\mathbb S^1)$, 
	\color{black}	and integers $k,\ell\in\Z$,
	such that, for all $\theta\in\R$,
	\begin{align}\label{eq:lambdaPsi}
		\mathrm{cof}\, \gamma'(\theta)
		&
		=\partial_\theta\Gamma(e^{i\theta})
		=\hat\lambda(\theta)\otimes \hat\Psi(\theta),
	\end{align}
	where $\Gamma$ is defined by \eqref{eq:Gamma},
	and
	\begin{align*}
		\hat\Psi(\theta+2\pi)
		&
		=e^{i k\pi}\hat\Psi(\theta),
		\quad
		\hat\lambda(\theta+2\pi)=e^{i \ell \pi}\hat\lambda(\theta).
		\nonumber
	\end{align*}
	\color{black}
	Moreover, $\hat\lambda,\hat\Psi$ have strictly monotone phases:
	\begin{align*}
		\hat\lambda=e^{i\varphi_\lambda},\;\hat\Psi=e^{i\varphi_\Psi},\text{ with }
		\varphi_\lambda,\varphi_\Psi\in C^1(\R;\R)
		\text{ such that }|\varphi_\lambda'|,|\varphi_\Psi'|>0.
	\end{align*}
\end{lem}

\begin{proof}[Proof of Lemma~\ref{l:lambdaPsi}]

\color{black}
Let 
$\gamma_c,\gamma_a\in C^2(\R/2\pi\Z;\C)$ 
correspond to the conformal and anticonformal parts of $\gamma$, that is,
\begin{align}
\label{eq:gamma_ac}
\gamma
&
=
[\gamma_c]_c +[\gamma_a]_a, 
\end{align}
where, for $z\in \mathbb C$, $[z]_c$ and $[z]_a$ are the associated conformal and anticonformal matrices as in \eqref{eq:zac}.
Since $\Pi$ is nowhere elliptic and $\gamma$ is an arc-length parametrization, we have
 $0=\det(\gamma') =|\gamma'_c|^2-|\gamma'_a|^2$ and $1= |\gamma'|^2=2|\gamma'_c|^2 +2 |\gamma'_a|^2$,
and deduce that
\begin{align*}
|\gamma'_c|=|\gamma'_a|=\frac 12.
\end{align*}
In particular, the maps $\gamma_c',\gamma'_a\colon\R/2\pi\Z\to\frac 12\mathbb S^1$ have  well-defined winding numbers, $\deg(\gamma_c'),\deg(\gamma_a')\in\Z$. 
\color{black}
	We may choose $\varphi_c, \varphi_a \in C^1(\R;\R)$ such that
	\begin{align*}
		\gamma_c'(\theta) = \frac 12 e^{i\varphi_c(\theta)}, \qquad \gamma_a'(\theta) = \frac 12 e^{i\varphi_a(\theta)}, 
	\end{align*}
	and  we have
	\begin{align*}
		\mathrm{cof}\,\gamma' &= \left(\begin{array}{cc}
			-\sin\lt(\frac{\varphi_c+\varphi_a}{2}\rt)\sin\lt(\frac{\varphi_c-\varphi_a}{2}\rt)  & -\sin\lt(\frac{\varphi_c+\varphi_a}{2}\rt)\cos\lt(\frac{\varphi_c-\varphi_a}{2}\rt) \\
			\cos\lt(\frac{\varphi_c+\varphi_a}{2}\rt)\sin\lt(\frac{\varphi_c-\varphi_a}{2}\rt) & \cos\lt(\frac{\varphi_c+\varphi_a}{2}\rt)\cos\lt(\frac{\varphi_c-\varphi_a}{2}\rt)
		\end{array}\right)\\
		&= i e^{i\frac{\varphi_c+\varphi_a}{2}}\otimes i e^{ i\frac{\varphi_a-\varphi_c}{2}}, 
	\end{align*}
	so we can define
	\begin{align*}
		\hat\lambda(\theta) = i e^{i\frac{\varphi_c(\theta)+\varphi_a(\theta)}{2}}, \qquad \hat\Psi(\theta) =  
		i e^{i\frac{\varphi_a(\theta)-\varphi_c(\theta)}{2}}. 
	\end{align*}
	\color{black}
	By definition of the winding numbers $\deg(\gamma_a')$, $\deg(\gamma_c')$, we have
	\begin{align*}
	(\varphi_a \pm \varphi_c)(\theta+2\pi)-(\varphi_a \pm \varphi_c)(\theta) =2\pi (\deg(\gamma_a')\pm\deg(\gamma_c')).
	\end{align*}
	Letting $k=\deg(\gamma_a')-\deg(\gamma_c')$ and $\ell=\deg(\gamma_a')+\deg(\gamma_c')$,
	this implies 	
	\color{black}
	all claimed properties of $\hat\lambda,\hat\Psi$, except the strict monotonicity of their phases. 
	That monotonicity  follows from the nondegeneracy condition
	\begin{align*}
		0 < |\det(\gamma'')|=|\det(\hat\lambda'\otimes \hat\Psi + \hat\lambda\otimes \hat\Psi')|
		=|\hat\lambda'| \: |\hat\Psi'|,
	\end{align*}
	where the
	\color{black}
	penultimate equality follows from \eqref{eq:lambdaPsi}, and the last equality from the fact that
 $|\hat{\lambda}|=|\hat{\Psi}|=1$.
 \color{black}
 This implies $|\hat\lambda'|,|\hat\Psi'|>0$ on $\R$.
\end{proof}

\begin{rem}\label{r:Psi}
	As a consequence of Lemma~\ref{l:lambdaPsi}, we can define the map
	\begin{align*}
		\Psi\in C^1(\mathbb S^1;\mathbb S^1),\quad &\Psi(e^{i\theta})=\hat\Psi(\theta)
		&&\text{if }k\text{ is even,}\\
		\Psi\in C^1(\mathbb S^1;\mathbb{RP}^1),\quad
		&\Psi(e^{i\theta})=\lbrace \pm \hat \Psi(\theta)\rbrace
		&&\text{if }k\text{ is odd}.
	\end{align*}
	This map has winding number $\deg(\Psi)=k/2$, and is a $C^1$ diffeomorphism if $|k|\in\lbrace 1,2\rbrace$. 
	\color{black}
	(Recall our convention \eqref{eq:degPsi} that the winding number of a loop in $\mathbb{RP}^1$ is a half-integer.)
	\color{black}
	Similarly we can define $\lambda$, $\mathbb S^1$-valued or $\mathbb{RP}^1$-valued, so that $\partial_{\theta}\Gamma = \lambda\otimes\Psi$. 
It then follows 
\color{black}
 that the image of $(\mathrm{cof}\,\gamma'(\theta))^T=(\partial_\theta\Gamma(e^{i\theta}))^T$
 \color{black}
  is spanned by $\Psi(e^{i\theta})$.
\end{rem}

\begin{rem}\label{r:Psi_bdry}
If $\gamma: [a,b]=I\to \R^{2\times 2}$ is a curve with boundary and satisfies the assumptions of Lemma~\ref{l:lambdaPsi}, then the same construction as in the proof of Lemma~\ref{l:lambdaPsi} gives $\hat\lambda, \hat\Psi \in C^1([a,b];\mathbb S^1)$ such that
	\begin{equation*}
		\mathrm{cof}\, \gamma'(\theta)=\partial_\theta\Gamma(e^{i\theta})
		=\hat\lambda(\theta)\otimes \hat\Psi(\theta) \qquad\forall \theta\in[a,b].
	\end{equation*}
	Then, as in Remark~\ref{r:Psi}, we can define $\Psi(e^{i\theta})=\hat\Psi(\theta), \lambda(e^{i\theta})=\hat\lambda(\theta) \in C^1(J;\mathbb S^1)$ for $J=\exp(iI)$. 
\end{rem}

\section{\color{black}A family of conservation laws in the nowhere elliptic case: proof of Proposition~\ref{p:conslaws}}\label{s:conslaws}

\color{black}
In this section we prove Proposition~\ref{p:conslaws}.
We let
 $J\subset\mathbb S^1$ be compact and connected,
 and $\Gamma\in C^2(J;\R^{2\times 2})$ satisfy $|\partial_\theta\Gamma|=1$, $\det(\partial_\theta\Gamma)=0$ and $|\det(\partial_\theta^2\Gamma)|>0$  on $J$.
We assume that  $m\colon \Omega\to J\subset\mathbb S^1$
has the regularity $B^{1/3}_{p,\infty,\mathrm{loc}}(\Omega;\mathbb S^1)$ for some $p>3$
and  satisfies \eqref{eq:divGamma0}, that is, the vector fields $\Gamma_1(m),\Gamma_2(m)$ are divergence-free.

Our goal is to show that $m$ solves the family of conservation laws \eqref{eq:conslaws}.
\color{black}
The starting point is the identity which follows from the chain rule,
\begin{align*}
\nabla\cdot\Phi(m)=\sum_{j=1,2}\alpha_\Phi^j(m)\nabla\cdot\Gamma_j(m)\qquad\text{if }m\colon\Omega\to J\subset \mathbb S^1\text{ is smooth.}
\end{align*}
In our case, $m$ is not smooth: in order to apply the chain rule, we use smooth approximating maps $m_\e\to m$.
The maps $m_\e$ will in general not be $\mathbb S^1$-valued, which introduces error terms in the above identity, 
and the proof of Proposition~\ref{p:conslaws} consists in estimating these error terms efficiently.
If the quantity $|h|^{-1/3}\|D^h m\|_{L^p}$ vanishes as $|h|\to 0$,
this follows directly from commutator estimates as in \cite{CET94,DLI15}.
Here we only know that this quantity is bounded \eqref{eq:besov}, 
and basic commutator estimates are not enough to conclude directly:
they only provide the information that entropy productions are in $L^{p/3}$, see \S~\ref{sss:Lp3}.
However they also provide identities involving weak limits of error terms, and we can take advantage of these identities and the structure of $\mathrm{ENT}_\Gamma$ to conclude.

\subsection{Commutator estimates}\label{ss:comm}

\color{black}
One of the main technical tools in the proof of Proposition~\ref{p:conslaws} is a family of commutator estimates.
Such estimates are well known, for the readers' convenience we state here the version that we will use.
\color{black}

\begin{lem}\label{l:commut}
Let $\Omega\subset\R^n$ be open and $w\in B^s_{p,\infty,\mathrm{loc}}\cap L^\infty(\Omega;\R^k)$ for some $s\in (0,1)$ and $p> 1$.
Let $\rho\in C_c^1(\R^n)$ with $\mathrm{supp}\,\rho\subset B_1$ and $\int\rho\, dx=1$ and denote convolution with $\rho_\e(x)=\e^{-n}\rho(x/\e)$ by a subscript $\e$.
Then for any $\alpha,\beta\geq 1$, any $C^2$ map $G\colon\R^k\to\R$, we have
\begin{align*}
\int_U
|G(w_\e)-G(w)_\e|^\alpha |\nabla w_\e|^\beta\, dx
\leq c \e^{s \min(p,2\alpha+\beta)-\beta}\qquad\forall U\subset\subset\Omega,
\end{align*}
for some constant $c>0$ depending on $\|G\|_{C^2}$, $\|w\|_\infty$,  $|w|_{B^{s}_{p,\infty}(U)}$, $\|\rho\|_{C^1}$, $\alpha$, $\beta$, $p$ and $U$.
\end{lem}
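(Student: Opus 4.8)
The plan is to run the classical commutator argument of Constantin--E--Titi \cite{CET94}, keeping careful track of the exponents $\alpha,\beta$ and working on the Besov scale $B^s_{p,\infty}$. Fix $U\subset\subset U'\subset\subset\Omega$ and restrict to $\e$ small enough that the $\e$-neighbourhood of $U$ lies in $U'$; since $\|w_\e\|_\infty\le\|w\|_\infty$, all the points of $\R^k$ at which $G,\nabla G,D^2G$ are evaluated stay in a fixed closed ball, on which $\|G\|_{C^2}$ controls everything. After the change of variables $y\mapsto\e y$ I write $w_\e(x)=\int_{B_1}\rho(y)\,w(x-\e y)\,dy$ and introduce $g(x,y)=w(x-\e y)-w(x)$ for $y\in B_1$; the hypothesis $w\in B^s_{p,\infty,\mathrm{loc}}$ then gives $\|g(\cdot,y)\|_{L^p(U)}\lesssim(\e|y|)^s\le\e^s$, while trivially $\|g(\cdot,y)\|_{L^\infty}\le 2\|w\|_\infty$.

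The first step is two pointwise bounds. From $\int\nabla\rho=0$ one gets $\nabla w_\e(x)=\e^{-1}\int_{B_1}\nabla\rho(y)\,g(x,y)\,dy$, hence $|\nabla w_\e(x)|\le\e^{-1}\int_{B_1}|\nabla\rho(y)|\,|g(x,y)|\,dy$. For the commutator I use $\int_{B_1}\rho=1$ to write $G(w)_\e(x)-G(w_\e(x))=\int_{B_1}\rho(y)\,[G(w(x-\e y))-G(w_\e(x))]\,dy$, Taylor expand $G$ about $w_\e(x)$, and observe that the first-order term vanishes upon integration because $\int_{B_1}\rho(y)\,(w(x-\e y)-w_\e(x))\,dy=0$. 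The quadratic remainder leaves $|G(w)_\e(x)-G(w_\e(x))|\lesssim\int_{B_1}\rho(y)\,|w(x-\e y)-w_\e(x)|^2\,dy\lesssim\int_{B_1}\rho(y)\,|g(x,y)|^2\,dy$, where in the last inequality I bound $|w(x-\e y)-w_\e(x)|\le|g(x,y)|+|w(x)-w_\e(x)|$ and use $|w(x)-w_\e(x)|^2\le\big(\int_{B_1}\rho(y)\,|g(x,y)|\,dy\big)^2\le\int_{B_1}\rho(y)\,|g(x,y)|^2\,dy$ by Jensen.

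Inserting these two bounds, applying Jensen once more (here I use $\alpha,\beta\ge1$ and that $\rho,|\nabla\rho|$ have finite mass) and Fubini, I reduce to
\[
\int_U|G(w)_\e-G(w_\e)|^\alpha|\nabla w_\e|^\beta\,dx
\lesssim\e^{-\beta}\int_{B_1}\!\int_{B_1}\rho(y_1)\,|\nabla\rho(y_2)|\Big(\int_U|g(x,y_1)|^{2\alpha}\,|g(x,y_2)|^\beta\,dx\Big)\,dy_1\,dy_2,
\]
so it only remains to bound the inner $x$-integral by $\e^{s\min(p,\,2\alpha+\beta)}$, uniformly in $y_1,y_2\in B_1$. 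If $2\alpha+\beta\le p$ I apply Hölder with exponents $p/(2\alpha)$, $p/\beta$ and the remaining exponent on $\mathbf 1_U$ to get $\|g(\cdot,y_1)\|_{L^p(U)}^{2\alpha}\|g(\cdot,y_2)\|_{L^p(U)}^{\beta}|U|^{1-(2\alpha+\beta)/p}\lesssim\e^{s(2\alpha+\beta)}$. If $2\alpha+\beta>p$ I split $|g(x,y_1)|^{2\alpha}|g(x,y_2)|^\beta=\big(|g(x,y_1)|^{a_1}|g(x,y_2)|^{a_2}\big)\,|g(x,y_1)|^{2\alpha-a_1}|g(x,y_2)|^{\beta-a_2}$ with $a_1=\min(p,2\alpha)$ and $a_2=p-a_1$, estimate the first factor by Hölder ($\lesssim\e^{s(a_1+a_2)}=\e^{sp}$) and the second by $(2\|w\|_\infty)^{2\alpha+\beta-p}$. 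Carrying the factor $\e^{-\beta}$ and integrating the bounded $y_1,y_2$ integrals yields the claimed $\e^{s\min(p,2\alpha+\beta)-\beta}$, with $c$ depending only on the quantities listed in the statement (the $\|\rho\|_{C^1}$-dependence enters through $\int_{B_1}|\nabla\rho|$ and the Jensen constant).

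The only delicate point is the interpolation bookkeeping in the regime $2\alpha+\beta>p$: one must distribute the $2\alpha+\beta$ copies of $g$ between the $L^p$ Besov bound and the $L^\infty$ bound on $w$, and verify that the split $(a_1,a_2)$ can be chosen with $a_1\le 2\alpha$, $a_2\le\beta$ and $a_1+a_2=p$ — which is possible precisely because $1\le p\le 2\alpha+\beta$. Everything else is routine once the two pointwise bounds and the double-integral reduction are in place.
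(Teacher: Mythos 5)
Your proposal is correct and follows essentially the same route as the paper: the two pointwise bounds you derive (via Taylor expansion and $\int\nabla\rho=0$) are exactly those the paper imports from \cite[Lemma~17]{LLP20}, and the reduction via Jensen and Fubini to the double integral $\int_U |g(\cdot,y_1)|^{2\alpha}|g(\cdot,y_2)|^\beta\,dx$ is identical. The only cosmetic difference is in the final step: the paper first applies Hölder with conjugate exponents $\tfrac{2\alpha+\beta}{2\alpha}$ and $\tfrac{2\alpha+\beta}{\beta}$ to land on $L^{2\alpha+\beta}(U)$ norms of $D^h w$ and then estimates those by the Besov bound (using Hölder on $U$ when $2\alpha+\beta\le p$ or $L^\infty$--$L^p$ interpolation when $2\alpha+\beta>p$), whereas you perform the case split $2\alpha+\beta\lessgtr p$ upfront with an explicit $(a_1,a_2)$ allocation — but the interpolation content is the same.
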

\begin{proof}[Proof of Lemma~\ref{l:commut}]
For any $U\subset\subset\Omega$ the regularity $B^s_{p,\infty,\mathrm{loc}}$ of $w$ amounts to
\begin{align*}
\int_U |D^h w|^p \leq |w|_{B^{s}_{p,\infty}(U)}^p |h|^{sp},
\end{align*}
for all $|h|\leq 1$.
Moreover, letting $R=\|w\|_\infty$, for any $\e>0$ we have (see e.g. \cite[Lemma~17]{LLP20})
\begin{align*}
|G(w_\e)-G(w)_\e|&
\leq c_0 \|G\|_{C^2(B_R)}\int_{B_1} |D^{\e y}w|^2\rho(y)\, dy,\\
|\nabla w_\e |
&\leq \frac{c_0}{\e} \int_{B_1} |D^{\e z}w|\, |\nabla \rho|(z)\, dz,
\end{align*}
for some absolute constant $c_0>0$, so applying Jensen's inequality and Fubini's theorem we deduce
\begin{align*}
&
\int_U
|G(w_\e)-G(w)_\e|^\alpha |\nabla w_\e|^\beta\, dx\\
&
\leq
\frac{c}{\e^\beta}\int_{B_1}\int_{B_1} \rho(y) |\nabla\rho|(z)\int_U |D^{\e y}w(x)|^{2\alpha}|D^{\e z}w(x)|^\beta \, dx\, dy dz,
\end{align*}
for some constant $c$ depending on $\|G\|_{C^2(B_R)}$ and $\|\rho\|_{C^1}$.
For any $y,z\in B_1$ we have, by H\"older's inequality with exponents $q=(2\alpha+\beta)/(2\alpha)$ and $q'=(2\alpha+\beta)/\beta$,
\begin{align*}
\int_U |D^{\e y}w|^{2\alpha}|D^{\e z}w|^\beta \, dx
&
\leq \left(\int_U |D^{\e y}w|^{2\alpha+\beta}\, dx\right)^{\frac 1q}
\left(\int_U |D^{\e z}w|^{2\alpha+\beta}\, dx\right)^{1-\frac 1q}\\
&\leq c |w|_{B^{s}_{p,\infty}(U)}^{\min (p,2\alpha+\beta)}\e^{s\min (p,2\alpha+\beta)}.
\end{align*}
In the last line we used the Besov regularity assumption, and the constant
$c$ depends on $\|w\|_\infty$, $|U|, \alpha, \beta$ and $p$. Plugging this back into the previous estimate concludes the proof.
\end{proof}

\subsection{The case without boundary}

We first prove Proposition~\ref{p:conslaws} in the case of a closed curve, $J=\mathbb S^1$.
The case with boundary will be treated in \S~\ref{ss:conslaws_bdry}.

\subsubsection{First step: entropy productions are in $L^{p/3}_{\mathrm{loc}}$}\label{sss:Lp3}

In this first step we 
 fix $\Phi\in \mathrm{ENT}_\Gamma $ and prove, using the regularity $m\in B^{1/3}_{p,\infty}$ and commutator estimates,
 that $\nabla\cdot\Phi(m)\in L^{p/3}_{\mathrm{loc}}(\Omega)$.
In fact we do not use all the assumptions on $\Gamma$ in Proposition~\ref{p:conslaws} and prove a more precise statement.

\begin{prop}\label{p:Lp3}
Let $\Gamma\in C^2(\mathbb S^1;\R^{2\times 2})$ and 
$m\in B^{\frac 13}_{p,\infty,\mathrm{loc}}(\Omega;\mathbb S^1)$ for some $p>3$, such that 
\begin{align*}
\nabla\cdot \Gamma_1(m)=\nabla\cdot\Gamma_2(m)=0\qquad
\text{ in }\mathcal D'(\Omega).
\end{align*}
Then there exist $f_1,f_2\in L^{p/3}_{\mathrm{loc}}(\Omega;\R^2)$ depending on $\Gamma$ and $m$ such that, for any entropy $\Phi\in \mathrm{ENT}_\Gamma$ \eqref{eq:ENT},
we have
\begin{align}\label{eq:dvPhif1f2}
\nabla\cdot\Phi(m)
=
\sum_{j=1}^2\partial_\theta\alpha_\Phi^j(m)\, im\cdot f_j \in L^{p/3}_{\mathrm{loc}}(\Omega),
\end{align}
where $\alpha_\Phi^1,\alpha_\Phi^2\in C^1(\mathbb S^1;\R)$ are as in \eqref{eq:ENT}.
\end{prop}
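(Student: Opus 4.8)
The plan is to regularize $m$ by convolution, apply the chain rule to the smooth maps $m_\e = m * \rho_\e$, and track the error terms arising because $m_\e$ is not $\mathbb S^1$-valued. Write $m_\e = r_\e \omega_\e$ where $\omega_\e = m_\e/|m_\e| \to m$ and $r_\e = |m_\e| \to 1$; the quartic estimate and Besov regularity guarantee good rates: $\|m_\e - m\|_{L^p} \lesssim \e^{1/3}$, $\|Dm_\e\|_{L^p} \lesssim \e^{-2/3}$, and crucially $\|1 - r_\e^2\|_{L^{p/2}} = \||m|^2 - |m_\e|^2\|_{L^{p/2}} \lesssim \|m - m_\e\|_{L^p}^2 \lesssim \e^{2/3}$ since $|m|=1$. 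These are the standard commutator-type bounds (as in \cite{CET94,DLI15}).

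\medskip

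The key computation is to mollify the conservation laws $\nabla\cdot\Gamma_j(m) = 0$ to get $\nabla\cdot(\Gamma_j(m))_\e = 0$, then compare $(\Gamma_j(m))_\e$ with $\Gamma_j(m_\e)$: the difference is a commutator controlled in $L^{p/3}$ by $\|Dm_\e\|_{L^p}\cdot\|m-m_\e\|_{L^p}$-type products, which are $O(\e^0)$, i.e.\ merely bounded. Hence $\nabla\cdot\Gamma_j(m_\e)$ is bounded in $L^{p/3}_{\loc}$; extracting a weakly convergent subsequence, define $g_j := \text{w-}\lim \nabla\cdot\Gamma_j(m_\e) \in L^{p/3}_{\loc}$. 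Similarly, for a fixed entropy $\Phi$, apply the chain rule to $\Phi(m_\e)$: one gets $\nabla\cdot\Phi(m_\e) = \partial_\theta\Phi(m_\e)\cdot Dm_\e$ (viewing things in the $\mathbb S^1$ parametrization with appropriate care for the fact that $m_\e$ is not on $\mathbb S^1$), and using $\partial_\theta\Phi = \alpha_\Phi^1\partial_\theta\Gamma_1 + \alpha_\Phi^2\partial_\theta\Gamma_2$ on $\mathbb S^1$ together with a radial extension of $\Phi$, $\Gamma_j$ off $\mathbb S^1$, one decomposes $\nabla\cdot\Phi(m_\e) = \sum_j \alpha_\Phi^j(m_\e)\nabla\cdot\Gamma_j(m_\e) + (\text{error involving } 1-r_\e^2 \text{ and } Dm_\e)$. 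The error term is a product of something bounded in $L^{p/2}$ with something bounded in $L^p$... no: it is $(1-r_\e^2)$ times $Dm_\e$ times bounded factors, so $O(\e^{2/3})\cdot O(\e^{-2/3}) = O(1)$ in $L^{p/3}$, again merely bounded. Passing to the limit, $\nabla\cdot\Phi(m) = \sum_j \alpha_\Phi^j(m) g_j + (\text{w-}\lim \text{error})$. The point of the careful decomposition \eqref{eq:dvPhif1f2} is to organize these limits so that $g_j$ gets replaced by $im\cdot f_j$ for fixed $f_1,f_2 \in L^{p/3}_{\loc}$ independent of $\Phi$, with the $\Phi$-dependence entirely in $\partial_\theta\alpha_\Phi^j(m)$; this requires integrating by parts to move a derivative off $\alpha_\Phi^j(m_\e)$ and identifying that the surviving limit is of the form $\partial_\theta\alpha_\Phi^j(m)\, im\cdot f_j$, the factor $im$ appearing because tangential derivatives along $\mathbb S^1$ at $m$ point in direction $im$.

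\medskip

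The main obstacle is precisely bookkeeping the weak limits so that the resulting $f_1, f_2$ do not depend on the choice of entropy $\Phi$ — this is what makes \eqref{eq:dvPhif1f2} a usable identity (it will be tested against various $\Phi$ later to force entropy productions to vanish). Concretely one must show that the weak limit of $\alpha_\Phi^j(m_\e)\nabla\cdot\Gamma_j(m_\e)$ minus the "naive" product $\alpha_\Phi^j(m)g_j$, plus the weak limit of the radial-extension error terms, collapses to exactly $\partial_\theta\alpha_\Phi^j(m)\,im\cdot f_j$ for a $\Phi$-independent $f_j$. The mechanism is an integration by parts: $\alpha_\Phi^j(m_\e)\nabla\cdot\Gamma_j(m_\e) = \nabla\cdot(\alpha_\Phi^j(m_\e)\Gamma_j(m_\e)) - \nabla(\alpha_\Phi^j(m_\e))\cdot\Gamma_j(m_\e)$, and in the limit the first piece is controlled using the conservation law structure while the second produces the $\partial_\theta\alpha_\Phi^j(m)$ factor against a universal limiting vector field. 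Identifying that universal vector field as $im\cdot f_j$ and checking $f_j\in L^{p/3}_{\loc}$ is the crux; the rest is routine mollification estimates. I would also note that the $L^{p/3}$ membership of the left side follows once the right side is identified, since each term there is a product of an $L^\infty$ function with an $L^{p/3}$ function.
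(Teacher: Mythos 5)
Your high-level plan is the right one — mollify, invoke the conservation law, use commutator estimates, and capture the $\Phi$-dependence through $\partial_\theta\alpha_\Phi^j(m)$ with the factor $im$ arising as the tangent direction to $\mathbb S^1$ at $m$. But three of the load-bearing steps are either wrong or not set up correctly, and each is essential in the paper's argument.

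First, the claim that $\nabla\cdot\Gamma_j(m_\e)$ is bounded in $L^{p/3}_{\loc}$ does not follow from the commutator bound on $\Gamma_j(m_\e)-\Gamma_j(m)_\e$. The commutator itself (no derivative) is $O(\e^{2/3})$ in $L^{p/2}$, but its \emph{divergence} picks up a factor $Dm_\e=O(\e^{-2/3})$ in $L^p$, and nothing in your outline cancels this. The paper never asserts such a bound; it only controls the specific combination obtained \emph{after} multiplying by $\hat\alpha_\Phi^j(m_\e)$ and integrating by parts in $x$. Your object $g_j=\mathrm{w\text{-}lim}\,\nabla\cdot\Gamma_j(m_\e)$, even if it existed, would actually vanish (being the divergence of a quantity tending to zero strongly), so it cannot play the role of $f_j$.

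Second, you use a standard radial extension $\Phi(z/|z|)$, which produces a radial-extension error of size $(1-|m_\e|^2)|Dm_\e|$, i.e.\ merely $O(1)$ in $L^{p/3}$, as you note. That is fatal: this error involves $H=\Phi-\sum_j\alpha_\Phi^j\Gamma_j$, which depends on $\Phi$ itself (not only on $\partial_\theta\alpha_\Phi^j$), so its weak limit cannot be absorbed into the form $\sum_j\partial_\theta\alpha_\Phi^j(m)\,im\cdot f_j$. The paper uses a \emph{flat} radial extension $\widehat\Phi(re^{i\theta})=\eta(r)\Phi(e^{i\theta})$ with $\eta\equiv1$ near $r=1$, so that the error carries the factor $\eta'(|m_\e|)\lesssim(1-|m_\e|^2)^2$ rather than $(1-|m_\e|^2)$. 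Then Lemma~\ref{l:commut} with $(\alpha,\beta)=(2,1)$ gives the rate $\e^{\frac13\min(p,5)-1}$, which is positive for all $p>3$, so this error converges to $0$ in $L^1_{\loc}$ and drops out entirely.

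Third, the integration by parts as you wrote it,
$\alpha_\Phi^j(m_\e)\nabla\cdot\Gamma_j(m_\e)
=\nabla\cdot\bigl(\alpha_\Phi^j(m_\e)\Gamma_j(m_\e)\bigr)
-\nabla\bigl(\alpha_\Phi^j(m_\e)\bigr)\cdot\Gamma_j(m_\e)$,
produces a term $\nabla(\alpha_\Phi^j(m_\e))\cdot\Gamma_j(m_\e)$ which is $O(\e^{-2/3})$ in $L^p$ and not bounded in $L^{p/3}$. The crucial step, which you do not perform, is to use the mollified conservation law \emph{before} integrating by parts: write $\nabla\cdot\widehat\Gamma_j(m_\e)=\nabla\cdot\bigl(\widehat\Gamma_j(m_\e)-\widehat\Gamma_j(m)_\e\bigr)$, and only then integrate by parts. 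The surviving term is
$\nabla\hat\alpha_\Phi^j(m_\e)\,Dm_\e\cdot\bigl(\widehat\Gamma_j(m_\e)-\widehat\Gamma_j(m)_\e\bigr)$,
which is (bounded)\,$\times$\,$Dm_\e$\,$\times$\,(commutator), and Lemma~\ref{l:commut} with $\alpha=\beta=p/3$ shows this is bounded in $L^{p/3}_{\loc}$ uniformly in $\e$. Extracting a weak limit $f_j$ of $Dm_\e(\widehat\Gamma_j(m)_\e-\widehat\Gamma_j(m_\e))$ — a $\Phi$-independent object — and using that $\nabla\hat\alpha_\Phi^j(m_\e)\to\nabla\hat\alpha_\Phi^j(m)=\partial_\theta\alpha_\Phi^j(m)\,im$ strongly, one gets exactly the desired form \eqref{eq:dvPhif1f2}. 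Without the pre-commutation, none of this bookkeeping goes through.
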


\begin{proof}[Proof of Proposition~\ref{p:Lp3}]
Direct calculations using polar coordinates $x=re^{i\theta}$ in $\R^2$ show that, for any $C^1$ vector fields $w\colon\Omega\to\R^2$ and $F\colon \R^2\to\R^2$ such that $\nabla F(0)=0$, we have
\begin{align}\label{eq:divFw}
\nabla\cdot F(w)
=
\left(\left(\partial_r F(w)\cdot\nabla \right)w\right)\cdot\frac{w}{|w|}
+
\left(\left(\partial_\theta F(w)\cdot\nabla \right)w\right)\cdot\frac{iw}{|w|^2}.
\end{align}
We apply \eqref{eq:divFw} to
vector fields $F$  which are `flat radial' extensions of $\Phi$ and $\Gamma_j$ ($j=1,2$) given in polar coordinates by
\begin{align*}
\widehat\Phi(re^{i\theta})=\eta(r)\Phi(e^{i\theta}),\qquad \widehat\Gamma_j(re^{i\theta})=\eta(r)\Gamma_j(e^{i\theta}),
\end{align*}
for some smooth $\eta\in C_c^\infty((0,\infty);[0,1])$ such that $\eta (r)\equiv 1$ for $1/2\leq r\leq 2$.
Recall from the definition \eqref{eq:ENT} of $\mathrm{ENT}_\Gamma$ that 
\begin{align*}
\partial_\theta\Phi =\sum_{j=1}^2 \alpha_\Phi^j \partial_\theta\Gamma_j,\qquad
\alpha_\Phi^j\in C^1(\mathbb S^1;\R).
\end{align*}
From \eqref{eq:divFw} applied to $\widehat\Phi$ and $\widehat\Gamma_j$ we therefore obtain,
 for any smooth $w\colon\Omega\to\R^2$,  
\begin{align*}
\nabla\cdot \widehat \Phi(w)
&
=
\eta'(|w|)\left(\left(\Phi\left(\frac{w}{|w|}\right)\cdot\nabla \right)w\right)\cdot\frac{w}{|w|}
\\
&\quad
+
\eta(|w|)\sum_{j=1}^2\alpha_\Phi^j\left(\frac{w}{|w|}\right)\left(\left( \partial_\theta\Gamma_j\left(\frac{w}{|w|}\right)\cdot\nabla \right)w\right)\cdot\frac{iw}{|w|^2},
\\
\nabla\cdot \widehat \Gamma_j(w)
&
=
\eta'(|w|)\left(\left(\Gamma_j\left(\frac{w}{|w|}\right)\cdot\nabla \right)w\right)\cdot\frac{w}{|w|}
\\
&\quad
+
\eta(|w|)\left(\left(\partial_\theta\Gamma_j\left(\frac{w}{|w|}\right)\cdot\nabla \right)w\right)\cdot\frac{iw}{|w|^2},
\end{align*}
which implies
\begin{align*}
\nabla\cdot \widehat \Phi(w)
-
\sum_{j=1,2}\alpha_\Phi^j\left(\frac{w}{|w|}\right)\nabla\cdot\widehat\Gamma_j(w)
&
=
\eta'(|w|)\left(\left(H\left(\frac{w}{|w|}\right)\cdot\nabla \right)w\right)\cdot\frac{w}{|w|},
\end{align*}
where $H=\Phi-\sum\alpha_\Phi^j\Gamma_j\in C^1(\mathbb S^1;\R^2)$.
Now we apply this to $w=m_\e$, where
\begin{align*}
m_\e =m *\rho_\e,\quad \rho_\e(x)=\e^{-2}\rho(x/\e),
\end{align*}
for some smooth kernel $\rho$ with support in $B_1$, 
and deduce
\begin{align}\label{eq:divPhiR1R2}
\nabla\cdot\widehat\Phi(m_\e)
&=R^1_\e +R^2_\e,\\
R^1_\e
&=
\sum_{j=1,2}\alpha_\Phi^j\left(\frac{m_\e}{|m_\e|}\right)\nabla\cdot
\left( \widehat\Gamma_j(m_\e)-\widehat\Gamma_j(m)_\e\right),
\nonumber
\\
R^2_\e
&=\eta'(|m_\e|)\left(\left(H\left(\frac{m_\e}{|m_\e|}\right)\cdot\nabla \right)m_\e\right)\cdot\frac{m_\e}{|m_\e|}.
\nonumber
\end{align}
To obtain the expression of $R^1_\e$ we used the fact that $\nabla\cdot\widehat\Gamma_j(m)=\nabla\cdot\Gamma_j(m)=0$.
Because $\eta'(r)\equiv 0$ in a neighborhood of $r=1$ we have
\begin{align*}
|R^2_\e|\leq C (1-|m_\e|^2)^2|D m_\e|,
\end{align*}
for some $C>0$ depending on $\eta$ and $H$, and 
noting that $1=|m|^2=(|m|^2)_\e$ we deduce 
from  commutator estimates  (Lemma~\ref{l:commut} applied to $G=|\cdot|^2$, $w=m$, $\alpha=2$, $\beta=1$, $s=1/3$, $p>3$) that
\begin{align}\label{eq:R2L1loc}
R^2_\e \longrightarrow 0\qquad\text{ in }L^1_{\mathrm{loc}}(\Omega).
\end{align}
Using that $\eta(r)\equiv 1$ in a neighborhood of $r=1$,
the same argument shows that, for the term $R^1_\e$ in \eqref{eq:divPhiR1R2}, we have
\begin{align}\label{eq:R1cutoffL1loc}
&(1-\eta(|m_\e|))R^1_\e 
\nonumber
\\
&
=(1-\eta(|m_\e|)) \sum_{j=1,2}\alpha_\Phi^j\left(\frac{m_\e}{|m_\e|}\right)\nabla\cdot\widehat\Gamma_j(m_\e) 
\to 0\qquad\text{ in }L^1_{\mathrm{loc}}(\Omega).
\end{align}
It remains to estimate $\eta(|m_\e|)R^1_\e$, which, 
defining
$\hat\alpha_\Phi^j(re^{i\theta})=\eta(r)\alpha_\Phi^j(e^{i\theta})\in C_c^1(\R^2;\R)$, we rewrite 
as
\begin{align*}
\eta(|m_\e|)R^1_\e
&
=\nabla\cdot
\left( \sum_{j=1,2}
\hat\alpha_\Phi^j(m_\e)
\lt(\widehat\Gamma_j(m_\e)-\widehat\Gamma_j(m)_\e\rt)
\right) \nn
\\
& 
\quad
-\sum_{j=1,2} \lt(\nabla\hat\alpha_\Phi^j(m_\e) D m_\e\rt) \cdot
\left( \widehat\Gamma_j(m_\e)-\widehat\Gamma_j(m)_\e\right).
\end{align*}
The first line in the right-hand side is the divergence of a sequence which tends to 0 in $L^1_{\mathrm{loc}}(\Omega)$. 
Each term in the second line is bounded in $L_{\mathrm{loc}}^{p/3}$, independently of $\e$, by commutator estimates
(Lemma~\ref{l:commut} applied to $G=\widehat\Gamma_j$, $w=m$, $s=1/3$, $\alpha=\beta=p/3$). 
More precisely, there exist 
 $f_1,f_2\in L^{p/3}_{\mathrm{loc}}(\Omega;\R^2)$
and a sequence $\e_k\to 0$ such that
\begin{align*}
 D m_\e
\left(\widehat\Gamma_j(m)_\e 
-\widehat\Gamma_j(m_\e)
\right)\rightharpoonup f_j\qquad\text{ in }L^{\frac p3}_{\mathrm{loc}}(\Omega),
\end{align*}
and therefore
\begin{align*}
\eta(|m_{\e_k}|)R^1_{\e_k}
\to \sum_{j=1}^2 \nabla \hat\alpha_\Phi^j(m)\cdot f_j
=
\sum_{j=1}^2\partial_\theta\alpha_\Phi^j(m)\, im\cdot f_j\qquad\text{in }\mathcal D'(\Omega).
\end{align*}
Plugging this and \eqref{eq:R2L1loc}-\eqref{eq:R1cutoffL1loc} into \eqref{eq:divPhiR1R2}, we deduce
\begin{align*}
\nabla\cdot\widehat\Phi(m_{\e_k})\to
 \sum_{j=1}^2\partial_\theta\alpha_\Phi^j(m)\, im\cdot f_j
\qquad\text{in }\mathcal D'(\Omega).
\end{align*}
Since $\widehat\Phi(m_\e)\to\Phi(m)$ in $L^1_{\mathrm{loc}}(\Omega)$, this implies
\eqref{eq:dvPhif1f2}.
\end{proof}

\subsubsection{Conclusion: entropy productions vanish}\label{sss:choice_ent_NE}

Here we show that all entropy productions vanish.
If $\Phi\in \mathrm{ENT}_\Gamma$ is such that there exist $\beta_\Phi^1,\beta_\Phi^2\in C^1(\mathbb S^1;\R)$ for which
\begin{align}\label{eq:partialPhi}
\partial_\theta\Phi =\beta_\Phi^1\partial_\theta\Gamma_1 =\beta_\Phi^2\partial_\theta\Gamma_2,
\end{align}
then applying \eqref{eq:dvPhif1f2} we obtain
\begin{align*}
\left(
\begin{array}{c}
\partial_\theta\beta_\Phi^1(m)
\\
-\partial_\theta\beta_\Phi^2(m)
\end{array}
\right)
\cdot
\left(
\begin{array}{c}
im\cdot f_1
\\
im\cdot f_2\end{array}
\right)
=0.
\end{align*}
So if we can find two such maps $\Phi,\overline\Phi$ such that
\begin{align*}
\det
\left(
\begin{array}{cc}
\partial_\theta\beta_\Phi^1(m)
&
\partial_\theta\beta_{\overline \Phi}^1(m)
\\
-\partial_\theta\beta_\Phi^2(m)
&
-\partial_\theta\beta_{\overline\Phi}^2(m)
\end{array}
\right)
\neq 0\quad\text{a.e.},
\end{align*}
then we deduce that $im\cdot f_1=im\cdot f_2=0$ a.e. and the conclusion \eqref{eq:conslaws} follows from \eqref{eq:dvPhif1f2}.

Note that here all assumptions of Lemma~\ref{l:lambdaPsi} are satisfied, so we have (recalling Remark~\ref{r:Psi})
\begin{align*}
	\partial_{\theta}\Gamma = \lambda\otimes\Psi, 
\end{align*}
for $\lambda,\Psi\in C^1(\mathbb S^1;\mathbb{RP}^1)$. Assume for example that there are entropies $\Phi,\overline\Phi$ such that
\begin{align}\label{eq:PhiPhibar}
\partial_\theta\Phi =\lambda_1^2\lambda_2\Psi,
\qquad
\partial_\theta\overline\Phi =\lambda_2^2\lambda_1\Psi.
\end{align}
The right-hand sides belong to 
$C^1(\mathbb S^1;\R^2)$ 
because 
$\lambda\in C^1(\mathbb S^1;\mathbb{RP}^1)$ and $\lambda\otimes\Psi\in C^1(\mathbb S^1;\R^{2\times 2})$.
These maps $\Phi,\overline\Phi$ would then satisfy \eqref{eq:partialPhi}, with
\begin{align*}
\beta_\Phi^1 =\lambda_1\lambda_2,\quad\beta_\Phi^2=\lambda_1^2,
\quad
\beta_{\overline\Phi}^1
=\lambda_2^2,
\quad
\beta_{\overline\Phi}^2=\lambda_1\lambda_2,
\end{align*}
and the above determinant is equal to
\begin{align*}
\det
\left(
\begin{array}{cc}
\partial_\theta\beta_\Phi^1
&
\partial_\theta\beta_{\overline \Phi}^1
\\
-\partial_\theta\beta_\Phi^2
&
-\partial_\theta\beta_{\overline\Phi}^2
\end{array}
\right)
&
=-(\lambda_2\partial_\theta\lambda_1
+\lambda_1\partial_\theta\lambda_2)^2
+4\lambda_1\lambda_2\partial_\theta\lambda_1\partial_\theta\lambda_2
\\
&
=-(\lambda_2\partial_\theta\lambda_1
-\lambda_1\partial_\theta\lambda_2)^2
=
-(i\lambda\cdot\partial_\theta\lambda)^2
\\
& =-|\partial_\theta\lambda|^2 <0.
\end{align*}
The last equality follows from
the fact that
$\lt|\lambda\rt|=1$,
 and the last inequality follows from the monotonicity of its phase as shown in Lemma~\ref{l:lambdaPsi}.

In general there might however not exist entropies 
whose derivatives are exactly as in \eqref{eq:PhiPhibar},
because the right-hand sides in \eqref{eq:PhiPhibar} may not have zero average.
But for any fixed $x\in\Omega$, we
 can modify
the maps $\lambda_1^2\lambda_2\Psi,\lambda_2^2\lambda_1\Psi\in C^1(\mathbb S^1;\R^2)$ on a subset of $\mathbb S^1$ which does not contain a small interval around $m(x)$, so that the modified maps have zero averages and are derivatives of entropies $\Phi,\overline\Phi$ as above. 
\color{black}
More precisely, we claim that there exist entropies $\Phi,\overline\Phi$ such that
\begin{align}\label{eq:modif_claim}
\partial_\theta\Phi =\lambda_1^2\lambda_2\Psi
\text{ and }
\partial_\theta\overline\Phi =\lambda_2^2\lambda_1\Psi
\text{ in a neighborhood of }m(x)\,.
\end{align}
\color{black}
At the point $x$, 
the determinant has the same value as before, so we deduce that $im(x)\cdot f_1(x)=im(x)\cdot f_2(x)=0$. 
We can do this for any fixed $x$,
so
the right-hand side of \eqref{eq:dvPhif1f2} is zero a.e., 
and
 this concludes the proof that all entropy productions vanish,
 \color{black} 
once we will have proved the claimed existence of entropies $\Phi,\overline\Phi$ satisfying \eqref{eq:modif_claim}.
%
%
To that end, we state here a technical lemma that will also be useful
in other places.\color{black}

\begin{lem}\label{l:modif} 
Let $\Gamma\in C^2(\mathbb S^1;\R^{2\times 2})$ be such that  
$\det(\partial_\theta^2\Gamma)$ is not identically zero.
Let $\Gamma_1,\Gamma_2$ denote the two rows of the matrix $\Gamma$.
 For any $\alpha_1,\alpha_2\in C^1(\mathbb S^1;\R)$, there exist $ \tilde\alpha_1,  \tilde\alpha_2\in C^1(\mathbb S^1;\R)$ such that
\begin{align*}
&
\int_{\mathbb S^1}\left(\sum_{j=1}^2\tilde\alpha_j \partial_\theta\Gamma_j \right)\, d\theta
=0
\\
\text{and }
&
\sum_{j=1}^2\sup_{\mathbb S^1}|\alpha_j-\tilde\alpha_j|
 \leq C \left|
\int_{\mathbb S^1}\left(\sum_{j=1}^2\alpha_j \partial_\theta\Gamma_j \right)\, d\theta\right|,
\end{align*}
for some constant $C$ depending only on $\Gamma$.
{\color{black}
Moreover, we can choose the functions $\tilde\alpha_j$ such that $\tilde\alpha_j=\alpha_j$ away from any neighborhood of any $\lbrace z_1,z_2\rbrace\subset\mathbb S^1$ such that $\det(\partial_\theta\Gamma_1(z_1),\partial_\theta\Gamma_2(z_2))\neq 0$.}
\end{lem}

\begin{proof}[Proof of Lemma~\ref{l:modif}]
We denote by $v\in\R^2$ the integral
\begin{align*}
v=
\int_{\mathbb S^1}\left(\sum_{j=1}^2\alpha_j \partial_\theta\Gamma_j \right)\, d\theta,
\end{align*}
and prove the existence of
$\beta_1,\beta_2\in C^1(\mathbb S^1;\R)$ such that
\begin{align}\label{eq:condbetaj}
&
\int_{\mathbb S^1}\left(\sum_{j=1}^2\beta_j \partial_\theta\Gamma_j \right)\, d\theta
=
v
\qquad\text{and}\qquad
\sum_{j=1}^2\sup_{\mathbb S^1}
|\beta_j|
 \leq C  |v|.
\end{align}
Then it suffices to define $\tilde\alpha_j=\alpha_j-\beta_j$.

We look for $\beta_j$ in the form $\beta_j(e^{i\theta})= b_j f_j(e^{i\theta})$, for some $b\in\R^2$ and $f_1,f_2\in C^1(\mathbb S^1;\R)$, then the first identity in \eqref{eq:condbetaj} amounts to
\begin{align*}
Ab=v,\quad\text{where }
A=\left(
\begin{array}{c|c}
\int_{\mathbb S^1}f_1\partial_\theta\Gamma_1
\, d\theta
&
\int_{\mathbb S^1} f_2\partial_\theta\Gamma_2\, d\theta
\end{array}
\right)\in\mathbb R^{2\times 2}.
\end{align*}
So it suffices to choose $f_1,f_2$ such that $\det(A)\neq 0$,
then we simply set $b=A^{-1}v$, and the second estimate in \eqref{eq:condbetaj} follows from $|b|\leq \| A^{-1}\| |v|$.

To find $f_1,f_2$ such that $\det(A)\neq 0$,
we can pick for instance
 $z_1,z_2\in\mathbb S^1$ such that 
$\det(\partial_\theta\Gamma_1(z_1),\partial_\theta\Gamma_2(z_2))\neq 0$ 
and choose, 
for $j=1,2$,
 $f_j$ approximating the Dirac mass at $z_j$.
The existence of such $z_1,z_2$
 follows from the fact that $\det(\partial^2_\theta\Gamma)$ is not identically zero:
if $\det(\partial_\theta\Gamma_1(z_1),\partial_\theta\Gamma_2(z_2))=0$ for all $z_1,z_2\in\mathbb S^1$ then
 $\partial_\theta\Gamma_1(\mathbb S^1)$ and $\partial_\theta\Gamma_2(\mathbb S^1)$ are contained in a single line,
which implies that $\partial_\theta \Gamma  v_0=0$  is identically zero for some fixed $v_0\in\mathbb S^1$, and therefore $\det(\partial^2_\theta\Gamma)$ is identically zero. 
\end{proof}

%

\begin{proof}[\color{black}Proof of Proposition~\ref{p:conslaws} (case without boundary) completed]
 {\color{black}
 We can now prove the existence of entropies $\Phi,\overline\Phi$ satisfying \eqref{eq:modif_claim}, thus concluding the
 proof of Proposition~\ref{p:conslaws} in the case without boundary.}
 Recall from Lemma~\ref{l:lambdaPsi} that $\lambda,\Psi$ 
have uniformly monotone phases. 
This implies that  $\lambda_1^{-1}(\lbrace 0\rbrace)$, $\lambda_2^{-1}(\lbrace 0\rbrace)$  and
$\Psi^{-1}(\lbrace \pm \Psi(z)\rbrace)$ 
are finite subsets of $\mathbb S^1$ for any $z\in\mathbb S^1$.
As a consequence, we can find
$z_1=e^{i\theta_1},z_2=e^{i\theta_2}\in\mathbb S^1\setminus \lbrace m(x)\rbrace$ such that 
$\det(\partial_{\theta}\Gamma_1(z_1),\partial_{\theta}\Gamma_2(z_2))\neq 0$. Denote $\alpha_1=\lambda_1\lambda_2$, $\alpha_2=0$, so that $\sum_{j=1}^2\alpha_j \partial_\theta\Gamma_j=\lambda_1^2\lambda_2\Psi$. 
Then Lemma~\ref{l:modif} gives $\ti\alpha_1, \ti\alpha_2\in C^1(\mathbb S^1;\R)$ such that $\int_{\mathbb S^1}\left(\sum_{j=1}^2\tilde\alpha_j \partial_\theta\Gamma_j \right)\, d\theta
=0$ and $\sum_{j=1}^2\tilde\alpha_j \partial_\theta\Gamma_j =\sum_{j=1}^2\alpha_j \partial_\theta\Gamma_j $ away from a sufficiently small neighborhood of $\{z_1, z_2\}$. 
In particular, we have $\sum_{j=1}^2\tilde\alpha_j \partial_\theta\Gamma_j=\lambda_1^2\lambda_2\Psi$ in a neighborhood of $m(x)$. Therefore we can define $\Phi(e^{i\theta})=\int_0^\theta \lt(\sum_{j=1}^2\tilde\alpha_j \partial_t\Gamma_j\rt) dt$,
which satisfies the first equation of \eqref{eq:modif_claim} on an open interval around $m(x)$.
An entropy $\overline\Phi$
satisfying the second equation of \eqref{eq:modif_claim} on an open interval around $m(x)$ is obtained by the same argument. 
\end{proof}

\subsection{The case with boundary}\label{ss:conslaws_bdry}

We now prove Proposition~\ref{p:conslaws} in the case with boundary, that is, 
 $J\varsubsetneq \mathbb S^1$ is  compact and connected.
We start by showing that there exist
 $f_1,f_2\in L^{p/3}_{\mathrm{loc}}(\Omega)$ such that
\begin{align}\label{eq:divPhiLp3_bdry}
\nabla\cdot
\Phi(m)=\sum_{j=1}^2\partial_\theta\alpha_{\Phi}^j(m)\, im\cdot f_j,
\qquad\forall \Phi\in \mathrm{ENT}_{\Gamma}.
\end{align}
To prove \eqref{eq:divPhiLp3_bdry} we extend $\Gamma$ to a map $\widetilde\Gamma\in C^2(\mathbb S^1;\R^{2\times 2})$.
Then the map $m$ satisfies $\widetilde\Gamma(m)=\Gamma(m)$, hence $\nabla\cdot\widetilde\Gamma(m)=0$.
Applying Proposition~\ref{p:Lp3} to $\widetilde\Gamma$ and $m$, we deduce that
\begin{align}\label{eq:divtildePhiLp3}
\nabla\cdot\widetilde\Phi(m)=\sum_{j=1}^2\partial_\theta\alpha_{\widetilde\Phi}^j(m)\, im\cdot f_j,
\qquad\forall \widetilde\Phi\in \mathrm{ENT}_{\widetilde \Gamma},
\end{align}
for some $f_1,f_2\in L^{p/3}_{\mathrm{loc}}(\Omega)$.
Now let $\Phi\in \mathrm{ENT}_\Gamma$, that is,
\begin{align*}
\partial_\theta \Phi =\sum_{j=1}^2 \alpha_\Phi^j\partial_\theta\Gamma_j\qquad\text{on }J,
\end{align*}
for some $\alpha_\Phi^j\in C^1(J;\R)$. 
There exist extensions $\tilde\alpha_\Phi^j\in C^1(\mathbb S^1;\R)$ of $\alpha_\Phi^j$ such that
\begin{align}\label{eq:tildealphaj}
\int_{\mathbb S^1} \left(\sum_{j=1}^2 \tilde\alpha_\Phi^j\partial_\theta\widetilde\Gamma_j\right)\, d\theta
=0.
\end{align}
Indeed, 
one can choose any extensions $\tilde\alpha_\Phi^j$ and
modify them in a neighborhood of a point $z_0\in \mathbb S^1\setminus J$ at which $\det(\partial_\theta\widetilde\Gamma)\neq 0$ in order to ensure \eqref{eq:tildealphaj}, {\color{black}see Lemma~\ref{l:modif}.}
If such a point $z_0$ does not exist in the first place, one can simply modify $\widetilde\Gamma$ away from $J$ by adding a term of the form $\eta I$ 
for $I\in\R^{2\times 2}$ the identity matrix and
 for some
non-zero $\eta\in C_c^2(\mathbb S^1\setminus J;\R)$ 
with $\|\eta'\|_{\infty}$ sufficiently large.

Thanks to \eqref{eq:tildealphaj} one can then define $\widetilde\Phi\in \mathrm{ENT}_{\widetilde\Gamma}$ such that $\widetilde \Phi=\Phi$ on $J$, by setting
\begin{align*}
\widetilde\Phi(e^{i\theta})=\Phi(e^{i\theta_0}) +\int_{\theta_0}^\theta 
\left(\sum_{j=1}^2 \tilde\alpha_\Phi^j\partial_t\widetilde\Gamma_j\right)dt,
\end{align*}
for some $e^{i\theta_0}\in J$.
Applying \eqref{eq:divtildePhiLp3} to this $\widetilde\Phi$ and using that $m$ takes values into $J$ we deduce \eqref{eq:divPhiLp3_bdry}.

Now we can use \eqref{eq:divPhiLp3_bdry} exactly as in \S~\ref{sss:choice_ent_NE} and conclude that $f_1=f_2=0$. In fact it is even easier because entropies do not have to be periodic, so we can directly find entropies $\Phi,\overline\Phi\in \mathrm{ENT}_\Gamma$ such that
\begin{align*}
\partial_\theta\Phi =\lambda_1^2\lambda_2\Psi,
\quad
\partial_\theta\overline\Phi
=\lambda_2^2\lambda_1\Psi,
\end{align*}
where $\lambda,\Psi\in C^1(J;\mathbb S^1)$ are such that $\partial_\theta\Gamma=\lambda\otimes\Psi$ on $J$ (see Remark~\ref{r:Psi_bdry}).
\qed

\section{The nowhere elliptic case: proofs of Theorems~\ref{t:NE} and \ref{t:NE_bdry}}\label{s:proofNE}

\color{black}
Proposition~\ref{p:conslaws} is the main ingredient in the 
proofs of Theorems~\ref{t:NE} and \ref{t:NE_bdry}
 and has been proved in \S~\ref{s:conslaws}.
Here we show how Theorems~\ref{t:NE} and \ref{t:NE_bdry} follow from Proposition~\ref{p:conslaws}.
\color{black}

First note that thanks to
Lemma~\ref{l:besov}, under the assumptions of Theorem~\ref{t:reg_gen}, in particular the quartic estimate \eqref{eq:det4},  the map $m$ given by $Du=\mathrm{cof}\, \Gamma(m)$ \eqref{eq:cofDuGammam} has the fractional regularity $B^{1/3}_{4,\infty,\mathrm{loc}}$ and we can apply Proposition~\ref{p:conslaws}:  the map $m$ solves
the family of conservation laws
\eqref{eq:conslaws}. 
In this section we use  this family of conservation laws to obtain the conclusions of Theorems~\ref{t:NE} 
and \ref{t:NE_bdry}. 

This is done in several steps and the map $\Psi$ appearing in Theorems~\ref{t:NE} and \ref{t:NE_bdry} plays a crucial role. 
\color{black} Recall that we obtained an explicit expression of $\Psi$ in Lemma~\ref{l:lambdaPsi}.
\color{black}
First we check in \S~\ref{ss:zerostates} that the map
$\tilde m=i\Psi(m)$  is a zero-energy state of the eikonal equation, in the sense of \cite{JOP02} if $\Psi$ is $\mathbb S^1$-valued, and in the sense of \cite{GMPS23} if $\Psi$ is $\mathbb{RP}^1$-valued. 
\color{black}
(This idea has already appeared in 
\cite[Theorem~4]{LLP22}, see Remark~\ref{r:geneik}.) 
\color{black}
In particular, this gives parts $(b)$ and $(c)$ of Theorem~\ref{t:NE}, in which cases $\Psi$ is a $C^1$ diffeomorphism and $m$ can have rigid singularities. 
Then in \S~\ref{ss:high_deg}, we use geometric arguments to show that the singular set of $m$ is empty in the case where the curve is closed and $\Psi$ has a high winding number. This gives part $(a)$ of Theorem~\ref{t:NE}. The same geometric arguments allow to show that the singular set of $m$ is empty if the curve has boundary, finally giving Theorem~\ref{t:NE_bdry} in \S~\ref{ss:NE_bdry}.

\subsection{Relation to zero-energy states of Aviles-Giga functionals}
\label{ss:zerostates}

First recall the notations from the introduction. Specifically, we
	assume without loss of generality that $\Pi$ has length 
	at most $2\pi$, and $\gamma\colon  I \to\Pi$ is an arc-length parametrization of class $C^2$,
	where $I$ is either a segment $[a,b]\subset\R$ ($a<b<a+2\pi$) in the case with boundary, or $I=\R/2\pi\Z$ in the case without boundary. Further, we set $J=\exp(iI)\subset\mathbb S^1$. 
	
	Here, 
	for $m:\Omega\to J$ solving the family of conservation laws \eqref{eq:conslaws},
	 we check that the map
$\tilde m=i\Psi(m)$  is a zero-energy state of the eikonal equation, in the sense of \cite{JOP02} if $\Psi$ is $\mathbb S^1$-valued, and in the sense of \cite{GMPS23} if $\Psi$ is $\mathbb{RP}^1$-valued.
This is due to the fact that, if $\widetilde\Phi \colon\mathbb S^1\to\R^2$ is an entropy of the eikonal equation in the sense of \cite{DKMO01}, that is,
\begin{align*}
	\partial_\theta\widetilde\Phi(e^{i\theta})\cdot e^{i\theta} =0\qquad\forall\theta\in\R,
\end{align*}
then (restricting to even maps $\widetilde\Phi$ in the case where $\Psi$ is $\mathbb{RP}^1$-valued) the map $\widetilde\Phi\circ (i\Psi)$ is an entropy in the sense of the present paper, and therefore $\nabla\cdot\widetilde\Phi(\tilde m)=0$, which is enough to apply the regularity and rigidity results of \cite{JOP02,GMPS23}  to the map $\tilde m$.

Since $\Psi$, and therefore $\widetilde\Phi\circ (i\Psi)$, is only $C^1$,
we must first extend \eqref{eq:conslaws} to entropies $\Phi$ which are only $C^1$.

\begin{lem}\label{l:conslawsC1}
Assume that $\Gamma\in C^2(J;\R^{2\times 2})$ satisfies $\det(\partial_{\theta}^2\Gamma)\ne 0$ on $J$. If $m\colon\Omega\to J$ satisfies
	\begin{align*}
		\nabla\cdot\Phi(m)=0\qquad\text{in }\mathcal D'(\Omega),\qquad\forall\Phi\in \mathrm{ENT}_\Gamma,
	\end{align*}
	then this identity is valid for all maps $\Phi$ in the larger class
	\begin{align*}
		\mathrm{ENT}_\Gamma^1 =\Big\lbrace
		\Phi\in C^1(J;\R^2)
		\colon
		&
		\exists \alpha_\Phi^1,\alpha_\Phi^2\in C^0(J;\R)\text{ s.t. }
		\nonumber
		\\
		&
		\partial_\theta\Phi =\alpha_\Phi^1 \partial_\theta\Gamma_1 +\alpha_\Phi^2 \partial_\theta\Gamma_2
		\Big\rbrace.
	\end{align*}
\end{lem}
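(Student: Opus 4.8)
The plan is to approximate a given $\Phi\in\mathrm{ENT}_\Gamma^1$ by entropies in $\mathrm{ENT}_\Gamma$ and pass to the limit in the identity $\nabla\cdot\Phi(m)=0$. The first, and main, issue is that the defining relation $\partial_\theta\Phi=\alpha_\Phi^1\partial_\theta\Gamma_1+\alpha_\Phi^2\partial_\theta\Gamma_2$ is a pointwise identity between vectors in $\R^2$, but $\partial_\theta\Gamma_1,\partial_\theta\Gamma_2$ are only $C^1$ and may be linearly dependent at some points; so one cannot simply mollify the coefficients $\alpha_\Phi^j$ and hope the resulting combination is close to $\partial_\theta\Phi$. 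This is where the hypothesis $\det(\partial_\theta^2\Gamma)\neq0$ enters. I would first use it to show that the $2\times2$ matrix $M(\theta)$ whose rows are $\partial_\theta\Gamma_1,\partial_\theta\Gamma_2$ together with its derivative control things well enough that, locally, $\partial_\theta\Phi$ determines $(\alpha_\Phi^1,\alpha_\Phi^2)$ in a stable way: indeed $\mathrm{cof}\,\gamma'$ has rank one with image spanned by $\Psi$, so $\partial_\theta\Gamma_1=\lambda_1\Psi$, $\partial_\theta\Gamma_2=\lambda_2\Psi$ for some $C^1$ functions $\lambda_1,\lambda_2$ (from Lemma~\ref{l:lambdaPsi}/Remark~\ref{r:Psi}), with $(\lambda_1,\lambda_2)$ nowhere vanishing because $|\partial_\theta\Gamma|=1$, and in fact $(\lambda_1,\lambda_2)$ has nonvanishing derivative in the direction transverse to $\Psi$ by the nondegeneracy $\det(\partial_\theta^2\Gamma)\neq0$. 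Thus $\partial_\theta\Phi=\beta_\Phi\Psi$ where $\beta_\Phi=\alpha_\Phi^1\lambda_1+\alpha_\Phi^2\lambda_2$ is merely $C^0$, and conversely any $C^0$ scalar $\beta$ arises as such a combination with $C^0$ coefficients. So $\mathrm{ENT}_\Gamma^1$ is exactly the set of $C^1$ maps $\Phi$ with $\partial_\theta\Phi$ pointwise parallel to $\Psi$, and $\mathrm{ENT}_\Gamma$ the same with $\partial_\theta\Phi/\Psi$ of class $C^1$ (smoothness of $\Psi$ aside — note $\Psi$ is only $C^1$, which I address below).

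Next I would carry out the approximation. Given $\Phi\in\mathrm{ENT}_\Gamma^1$, write $\partial_\theta\Phi=\beta\,\Psi$ with $\beta\in C^0(J;\R)$ as above. Mollify $\beta$ to get $\beta_\e\in C^1$ with $\beta_\e\to\beta$ uniformly on $J$, and set $\Phi_\e(e^{i\theta})=\Phi(e^{i\theta_0})+\int_{\theta_0}^\theta\beta_\e(e^{is})\Psi(e^{is})\,ds$ (in the closed case $J=\mathbb S^1$ one must correct $\beta_\e$ by subtracting a suitable constant multiple of a fixed bump so that the integral over a full period still closes up — this is a one-parameter adjustment and does not affect uniform convergence). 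Then $\Phi_\e\in\mathrm{ENT}_\Gamma$ since $\partial_\theta\Phi_\e=\beta_\e\Psi$ and $\beta_\e$ is $C^1$; moreover $\Phi_\e\to\Phi$ uniformly on $J$ and $\partial_\theta\Phi_\e\to\partial_\theta\Phi$ uniformly. Consequently $\Phi_\e(m)\to\Phi(m)$ uniformly (hence in $L^1_{\loc}$), so $\nabla\cdot\Phi_\e(m)\to\nabla\cdot\Phi(m)$ in $\mathcal D'(\Omega)$; since each left side vanishes by hypothesis, so does the limit. There is a subtlety about whether $\Psi$ being only $C^1$ lets us land inside $\mathrm{ENT}_\Gamma$, whose elements are required $C^2$: the safe route is to observe that $\partial_\theta\Gamma_1,\partial_\theta\Gamma_2$ are $C^1$, so $\beta_\e\Psi=\beta_\e(\lambda_1,\lambda_2)^{-1}$-combination of the $\partial_\theta\Gamma_j$ with $C^1$ coefficients — wait, $(\lambda_1,\lambda_2)$ need not be bounded away from $0$ as a pair in each slot, but their $\ell^2$-norm is. One can instead pick the coefficient pair $(\alpha^1_\e,\alpha^2_\e)=\beta_\e\,(\lambda_1,\lambda_2)/(\lambda_1^2+\lambda_2^2)$, which is $C^1$ and satisfies $\alpha^1_\e\partial_\theta\Gamma_1+\alpha^2_\e\partial_\theta\Gamma_2=\beta_\e\Psi$; then $\Phi_\e\in\mathrm{ENT}_\Gamma$ because $\partial_\theta\Phi_\e$ is a $C^1$-combination of the $\partial_\theta\Gamma_j$ (and $\Phi_\e$ is $C^2$ as the integral of a $C^1$ integrand).

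The step I expect to be the real obstacle is the one I flagged first: verifying cleanly, from $\det(\partial_\theta^2\Gamma)\neq0$ alone (without any ellipticity or no-rank-one-connection hypothesis, and in the generality where $J$ may have boundary), that $\mathrm{ENT}_\Gamma^1$ coincides with $\{\Phi\in C^1:\partial_\theta\Phi\parallel\Psi\}$ and that the coefficient reconstruction $\beta\mapsto(\alpha^1,\alpha^2)$ can be done with preservation of $C^1$ regularity. The decomposition $\partial_\theta\Gamma=\lambda\otimes\Psi$ with $\lambda,\Psi$ of class $C^1$ is precisely Lemma~\ref{l:lambdaPsi} (and Remark~\ref{r:Psi_bdry} in the boundary case), so I would invoke it directly; the nonvanishing of $(\lambda_1,\lambda_2)$ follows from $|\partial_\theta\Gamma|=1$, which makes the formula $(\alpha^1,\alpha^2)=\beta(\lambda_1,\lambda_2)/|\lambda|^2$ well-defined and as regular as $\beta$ and $\lambda$. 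Everything else is the routine mollification-and-pass-to-the-limit argument sketched above, with the only care being the period-closing correction in the case $J=\mathbb S^1$.
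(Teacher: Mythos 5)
Your opening concern—that ``one cannot simply mollify the coefficients $\alpha_\Phi^j$ and hope the resulting combination is close to $\partial_\theta\Phi$'' because $\partial_\theta\Gamma_1,\partial_\theta\Gamma_2$ may be linearly dependent—is misplaced, and it leads you down an unnecessary detour. The paper's proof does exactly the direct mollification you rule out: pick $\alpha^j_k\in C^1(J;\R)$ with $\alpha^j_k\to\alpha^j$ uniformly; then $\sum_j\alpha^j_k\partial_\theta\Gamma_j\to\sum_j\alpha^j\partial_\theta\Gamma_j=\partial_\theta\Phi$ uniformly, since the $\partial_\theta\Gamma_j$ are fixed bounded functions. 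Linear (in)dependence of $\partial_\theta\Gamma_1,\partial_\theta\Gamma_2$ is irrelevant because you are not trying to recover the $\alpha^j$ from $\partial_\theta\Phi$ (an inverse problem); their existence is part of the hypothesis. Setting $\Phi_k(e^{i\theta})=\Phi(e^{ia})+\int_a^\theta\bigl(\sum_j\alpha^j_k\partial_t\Gamma_j\bigr)(e^{it})\,dt$ yields $\Phi_k\in\mathrm{ENT}_\Gamma$ with $\Phi_k\to\Phi$ in $C^1$, and one passes to the limit in $\nabla\cdot\Phi_k(m)=0$.

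Your route through the rank-one decomposition $\partial_\theta\Gamma=\lambda\otimes\Psi$ also imports assumptions not in the lemma: Lemma~\ref{l:conslawsC1} only supposes $\det(\partial_\theta^2\Gamma)\neq 0$, while the decomposition requires the nowhere elliptic normalizations $|\partial_\theta\Gamma|=1$ and $\det(\partial_\theta\Gamma)=0$. That is fine in every application in the paper, but it makes your proof strictly less general than the stated lemma, where the nondegeneracy hypothesis serves a narrower purpose (the period-closing in the closed case). Finally, the period-closing step is underspecified as you describe it: $\int_{\mathbb S^1}\beta_\e\Psi\,d\theta$ is a vector in $\R^2$, so correcting it to zero is a two-parameter adjustment, not a one-parameter one—one needs, say, two localized bumps near points where $\Psi$ takes linearly independent values. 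This is precisely what the paper's Lemma~\ref{l:modif} packages, and it is where $\det(\partial_\theta^2\Gamma)\neq 0$ is genuinely used.
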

\begin{proof}[Proof of Lemma~\ref{l:conslawsC1}]
	This follows directly from the fact that  $\mathrm{ENT}_\Gamma$ is dense in $\mathrm{ENT}_\Gamma^1$ in the $C^1$ topology.
	Let indeed $\Phi\in \mathrm{ENT}^{1}_\Gamma$, and $\alpha^j :=\alpha_\Phi^j\in C^0(J;\R)$.
	There exist $\alpha^j_k\in C^1(J;\R)$ such that $\alpha^j_k\to \alpha^j$ uniformly. 

If $J=\lbrace e^{i\theta}\rbrace_{\theta\in [a,b]}\varsubsetneq \mathbb S^1$, the formula
	\begin{align*}
		\Phi_k(e^{i\theta})=\Phi(e^{ia})+\int_a^\theta 
		\left(
		\sum_{j=1}^2 \alpha^j_k\partial_{t}\Gamma_j
		\right)
		(e^{it})
		\, dt\qquad\forall\theta\in[a,b],
	\end{align*}
	defines an entropy $\Phi_k\in \mathrm{ENT}_\Gamma$, and $\Phi_k\to\Phi$ in the $C^1$ topology since $\Phi_k(e^{ia})=\Phi(e^{ia})$ and $\partial_\theta\Phi_k\to\partial_\theta\Phi$ uniformly.

If $J=\mathbb S^1$, the average of 
	$\sum_{j=1}^2\alpha^j_k\partial_\theta\Gamma_j$ on $\mathbb S^1$ tends to the average of 
	$\sum_{j=1}^2\alpha^j\partial_\theta\Gamma_j
	=\partial_\theta\Phi$ on $\mathbb S^1$, which is equal to zero. Therefore, applying Lemma~\ref{l:modif}, we obtain $\tilde\alpha^j_k\in C^1(\mathbb S^1;\R)$ such that $\tilde\alpha^j_k\to \alpha^j$ uniformly and $\sum_{j=1}^2\tilde\alpha^j_k\partial_\theta\Gamma_j$ has zero average on $\mathbb S^1$.
	Then the formula
	\begin{align*}
		\Phi_k(e^{i\theta})=\Phi(1)+\int_0^\theta 
		\left(
		\sum_{j=1}^2 \tilde\alpha^j_k\partial_{t}\Gamma_j
		\right)
		(e^{it})
		\, dt
	\end{align*}
	defines an entropy $\Phi_k\in \mathrm{ENT}_\Gamma$, and $\Phi_k\to\Phi$ in the $C^1$ topology since $\Phi_k(1)=\Phi(1)$ and $\partial_\theta\Phi_k\to\partial_\theta\Phi$ uniformly.
\end{proof}

Now that entropies are allowed to be $C^1$, we can check that entropies of the eikonal equation provide entropies in our setting.

\begin{lem}\label{l:entropiesAG}
Assume that $\Gamma\in C^2(J;\R^{2\times 2})$ satisfies $|\partial_\theta\Gamma|=1$, $\det(\partial_\theta\Gamma)=0$ and $|\det(\partial_\theta^2\Gamma)|>0$  on $J$.
 Let $\widetilde\Phi\in C^1(\mathbb S^1;\R^2)$ be such that 
	$\partial_\theta\widetilde\Phi(e^{i\theta})\cdot e^{i\theta}=0$ for all $\theta\in\R$,
	and in the case where 
	$\Psi$ is $\mathbb{RP}^1$-valued assume in addition that $\widetilde\Phi$ is even.
	Then $\Phi=\widetilde\Phi\circ (i\Psi)\in \mathrm{ENT}_{\Gamma}^1$.
\end{lem}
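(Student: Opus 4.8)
The plan is to verify directly that $\Phi=\widetilde\Phi\circ(i\Psi)$ lies in $\mathrm{ENT}_\Gamma^1$, i.e.\ that $\partial_\theta\Phi$ is a $C^0$-combination of $\partial_\theta\Gamma_1$ and $\partial_\theta\Gamma_2$. The starting point is the factorization from Lemma~\ref{l:lambdaPsi} and Remark~\ref{r:Psi}: writing $\lambda,\Psi\colon J\to\mathbb S^1$ (or $\mathbb{RP}^1$) with $\partial_\theta\Gamma(e^{i\theta})=\hat\lambda(\theta)\otimes\hat\Psi(\theta)$, the two rows are $\partial_\theta\Gamma_j(e^{i\theta})=\hat\lambda_j(\theta)\,\hat\Psi(\theta)\in\R^2$, where $\hat\lambda_1,\hat\lambda_2$ are the components of $\hat\lambda\in\mathbb S^1$. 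Since $|\hat\lambda|=1$, the pair $(\hat\lambda_1(\theta),\hat\lambda_2(\theta))$ never vanishes simultaneously, so the span of $\{\partial_\theta\Gamma_1(e^{i\theta}),\partial_\theta\Gamma_2(e^{i\theta})\}$ is exactly the line $\R\,\hat\Psi(\theta)$. Consequently, $\Phi\in\mathrm{ENT}_\Gamma^1$ will follow as soon as we check that $\partial_\theta\Phi(e^{i\theta})$ is everywhere parallel to $\hat\Psi(\theta)$, since then we can write $\partial_\theta\Phi=\beta(\theta)\hat\Psi(\theta)$ for a continuous scalar $\beta$ and then solve the (continuous, pointwise-invertible off a measure-zero set — in fact solvable everywhere by choosing the right component) linear system $\beta\hat\Psi=\alpha^1\hat\lambda_1\hat\Psi+\alpha^2\hat\lambda_2\hat\Psi$, e.g.\ $\alpha^j=\beta\hat\lambda_j$ using $\hat\lambda_1^2+\hat\lambda_2^2=1$, which are manifestly continuous.

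The second step is the chain-rule computation of $\partial_\theta\Phi$. With $\Phi(e^{i\theta})=\widetilde\Phi(i\hat\Psi(\theta))$ and $i\hat\Psi(\theta)=e^{i(\varphi_\Psi(\theta)+\pi/2)}$, we get
\begin{align*}
\partial_\theta\Phi(e^{i\theta})=\varphi_\Psi'(\theta)\,\big(\partial_s\widetilde\Phi\big)\big(e^{i(\varphi_\Psi(\theta)+\pi/2)}\big),
\end{align*}
where $\partial_s\widetilde\Phi(e^{is})$ denotes the $\theta$-derivative of $s\mapsto\widetilde\Phi(e^{is})$. Now the eikonal-entropy hypothesis $\partial_s\widetilde\Phi(e^{is})\cdot e^{is}=0$ says precisely that $\partial_s\widetilde\Phi(e^{is})$ is orthogonal to $e^{is}$, hence parallel to $ie^{is}$. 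Evaluating at $s=\varphi_\Psi(\theta)+\pi/2$ gives $\partial_s\widetilde\Phi\parallel i\cdot e^{i(\varphi_\Psi(\theta)+\pi/2)}=i^2\hat\Psi(\theta)=-\hat\Psi(\theta)$, i.e.\ $\partial_\theta\Phi(e^{i\theta})$ is parallel to $\hat\Psi(\theta)$, as required. This also shows $\beta(\theta)$ is continuous (it is $\varphi_\Psi'$ times a continuous scalar coming from $\partial_s\widetilde\Phi$), and $\Phi$ itself is $C^1$ because $\widetilde\Phi\in C^1$ and $\hat\Psi\in C^1$.

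The third step handles the $\mathbb{RP}^1$-valued case: when $k$ is odd, $\Psi(e^{i\theta})=\{\pm\hat\Psi(\theta)\}$ is only well-defined up to sign, so $\Phi=\widetilde\Phi\circ(i\Psi)$ only makes sense if $\widetilde\Phi$ is invariant under $z\mapsto-z$, which is exactly the assumed evenness of $\widetilde\Phi$; with that assumption $\Phi(e^{i\theta}):=\widetilde\Phi(i\hat\Psi(\theta))$ is independent of the choice of lift $\hat\Psi$ and is a genuine $C^1$ map on $J$, and the computation above goes through verbatim (the identity $\partial_s\widetilde\Phi(e^{is})\cdot e^{is}=0$ is preserved, and evenness is not otherwise needed in the derivative computation). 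I expect no serious obstacle here — the only mildly delicate point is making sure the scalars $\alpha^1,\alpha^2$ one extracts are genuinely continuous on all of $J$ (including where one of $\hat\lambda_1,\hat\lambda_2$ vanishes), which is why I choose the symmetric formula $\alpha^j=\beta\hat\lambda_j$ rather than dividing by a single component. One should also remark that, combined with Lemma~\ref{l:conslawsC1}, this immediately yields $\nabla\cdot\widetilde\Phi(\tilde m)=0$ in $\mathcal D'(\Omega)$ for $\tilde m=i\Psi(m)$, which is the payoff used in \S~\ref{ss:zerostates}.
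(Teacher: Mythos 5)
Your argument is essentially the paper's: both compute $\partial_\theta\Phi$ via the chain rule, use the factorization $\partial_\theta\Gamma_j=\hat\lambda_j\hat\Psi$, and set $\hat\alpha_j=\beta\hat\lambda_j$ with $\beta(\theta)=-\varphi_\Psi'(\theta)\,\mu(i\hat\Psi(\theta))$ for $\mu(e^{is})=\partial_s\widetilde\Phi(e^{is})\cdot ie^{is}$, the trick $|\hat\lambda|^2=1$ being the same in both. However, your third step is imprecise on exactly the point where the paper's proof does nontrivial work. You flag the delicate issue as ``continuity where a component of $\hat\lambda$ vanishes,'' but that is harmless (as you resolve yourself). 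The real issue, when $J=\mathbb S^1$ and $k$ is odd, is that $\hat\alpha_j$ is a priori only defined as a function of $\theta\in\R$, and one must check it is $2\pi$-periodic so that it descends to a genuine element of $C^0(\mathbb S^1;\R)$. Since $\hat\Psi(\theta+2\pi)=-\hat\Psi(\theta)$, both $\beta$ and $\hat\lambda_j$ are only $2\pi$-antiperiodic, and it is their product $\hat\alpha_j=\beta\hat\lambda_j$ that turns out periodic. In your formulation this periodicity can be deduced indirectly: evenness of $\widetilde\Phi$ makes $\Phi$ well-defined and $C^1$ on $\mathbb S^1$, hence $\partial_\theta\Phi$ is periodic, hence $\beta=\partial_\theta\Phi\cdot\hat\Psi$ is antiperiodic, and the two antiperiodicities cancel. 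But your remark that ``evenness is not otherwise needed in the derivative computation'' buries this: evenness is precisely what ensures $\hat\alpha_j$ descends to $\mathbb S^1$. The paper makes this explicit by recording that $\mu$ is odd when $\widetilde\Phi$ is even and then verifying $2\pi$-periodicity of $\hat\alpha_j$ directly. Your proof is correct once this periodicity check is made explicit; you should state it rather than subsume it under ``goes through verbatim.''
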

\begin{proof}[Proof of Lemma~\ref{l:entropiesAG}]
	Let 
	$\mu(e^{i\theta})=\partial_\theta\widetilde\Phi(e^{i\theta})\cdot ie^{i\theta} $,
	so that $\mu\in C^0(\mathbb S^1;\R)$ and $\partial_\theta\widetilde\Phi(e^{i\theta})=\mu\lt(e^{i\theta}\rt)ie^{i\theta}$ for all $\theta\in\R$. 
Note for later use that $\mu$ is odd if $\widetilde\Phi$ is even.
 In all cases, we have, 
	with the notations of
 Lemma~\ref{l:lambdaPsi} (and Remark~\ref{r:Psi_bdry} in the case where $J\varsubsetneq \mathbb S^1$),
	\begin{align*}
		\Phi(e^{i\theta})
		=\widetilde\Phi(i\hat\Psi(\theta))
		=\widetilde\Phi(e^{i(\frac\pi 2 + \varphi_\Psi(\theta))})\qquad
		\forall \theta\in I,
	\end{align*}
	so
	\begin{align*}
		\partial_\theta\Phi(e^{i\theta})
		&
		=-\varphi_\Psi'(\theta)\mu(i\hat\Psi(\theta))\hat\Psi(\theta)\qquad
		\forall \theta\in I.
	\end{align*}
	Using that
	$\partial_\theta \Gamma_j(e^{i\theta}) =\hat\lambda_j(\theta)\hat\Psi(\theta)$ 
	and $|\hat\lambda|^2=1$, this becomes
	\begin{align*}
		\partial_\theta\Phi(e^{i\theta})
		&
		=\hat\alpha_1(\theta)\partial_\theta\Gamma_1(e^{i\theta})
		+\hat\alpha_2(\theta)\partial_\theta\Gamma_2(e^{i\theta}),\\
		\text{where }
		\hat\alpha_j(\theta)
		&
		=
		-\varphi_\Psi'(\theta)\mu(i\hat\Psi(\theta))\hat\lambda_j(\theta).
	\end{align*}
	We clearly have $\hat\alpha_j\in C^0(I;\R)$, and show next that this function is $2\pi$-periodic 
	in the case $J=\mathbb S^1$,
	distinguishing the cases where $k:=2\deg(\Psi)$ is even or odd.
	
	Note that $\varphi_\Psi(\theta+2\pi)=\varphi_\Psi(\theta)+k\pi$, so $\varphi_\Psi'$ is $2\pi$-periodic
 in both cases. 
	If $k$ is even,
	then $\hat\Psi$ is $2\pi$-periodic.
	Since $\hat\lambda\otimes \hat\Psi$ is $2\pi$-periodic this implies that $\hat\lambda$ is also $2\pi$-periodic, and therefore so is $\hat\alpha_j$.
	If $k$ is odd, then $\hat\Psi(\theta+2\pi)=-\hat\Psi(\theta)$ 
	and again since $\hat\lambda\otimes \hat\Psi$ is $2\pi$-periodic this implies that $\hat\lambda(\theta+2\pi)=-\hat\lambda(\theta)$.
	Moreover in that case we assume that
	$\widetilde\Phi$ is even and therefore $\mu$ is odd, so we also find that $\hat\alpha_j$ is $2\pi$-periodic. 
	
	Hence $\alpha_j(e^{i\theta})=\hat\alpha_j(\theta)$ is well-defined and continuous on $J$, which proves that $\Phi\in \mathrm{ENT}_\Gamma^1$.
\end{proof}

Combining Lemmas~\ref{l:conslawsC1} and \ref{l:entropiesAG}, if $m:\Omega\to J$ solves the family of conservation laws \eqref{eq:conslaws}, then the map $\tilde m=i\Psi(m)$ is an $\mathbb S^1$-valued zero-energy state of the Aviles-Giga energy  \cite[\S~2]{JOP02}, or an $\mathbb{RP}^1$-valued zero-energy state of the unoriented Aviles-Giga energy \cite[\S~1]{GMPS23}. We will use this structure of $\Psi(m)$ in the next two subsections to show Theorems~\ref{t:NE} and \ref{t:NE_bdry}.

\begin{rem}\label{r:geneik}
In \cite[Theorem~4]{LLP22} we used a similar property (link with zero-energy states of Aviles-Giga)
 to prove regularity of solutions to a generalized eikonal equation $N(\nabla u)=1$, where $N$ is a strictly convex $C^1$ norm on $\R^2$: if all entropy productions (associated to that eikonal equation as in \cite[\S~2]{LLP22})
vanish
then $\nabla u$ is continuous outside a locally finite set.
Using an appropriate equivalent of Lemma~\ref{l:entropiesAG}, one could obtain the same result  for  solutions of any equation of the form $\mathcal A(\nabla u)=1$, where $\lbrace \mathcal A=1\rbrace$ is a closed strictly convex $C^2$ curve  in $\R^2$.
Vanishing of all entropy productions can, in turn, be inferred from a zero-energy assumption (as in \cite{LLP22}), 
or from a regularity assumption $\nabla u\in W^{1/3,3}_{\mathrm{loc}}$ 
 and a commutator argument as in \cite{DLI15} (in fact $\nabla u \in B^{1/3}_{3,c_0,\mathrm{loc}}$ is enough).
\end{rem}

\subsection{The case without boundary: proof of Theorem~\ref{t:NE}}

Recall from \eqref{eq:degPsi} and Remark~\ref{r:Psi}, we use the convention that the winding number $\deg(\Psi)$ of a continuous loop 
$\Psi\colon \mathbb S^1\to \mathbb{RP}^1$ is  a half-integer,
and $\Psi$ 
can be identified with a continuous loop $\Psi\colon \mathbb S^1\to\mathbb S^1$ if and only if $\deg(\Psi)\in\mathbb Z$. In this subsection, we prove the following result, from which Theorem~\ref{t:NE} follows immediately.

\begin{prop}\label{p:rigid_m}
Assume that $\Gamma\in C^2(\mathbb S^1;\R^{2\times 2})$ satisfies $|\partial_\theta\Gamma|=1$,
$\det(\partial_\theta\Gamma)=0$ and $\det(\partial_\theta^2\Gamma)\ne 0$ on $\mathbb S^1$.
Let $m\colon \Omega\to\mathbb S^1$ solve the family of conservation laws \eqref{eq:conslaws}.
Then $m$ is locally Lipschitz outside a locally finite set $\mathcal S\subset\Omega$.

Moreover, let $\lambda,\Psi\in C^1(\mathbb S^1;\mathbb{RP}^1)$ satisfy $\partial_\theta\Gamma=\lambda\otimes\Psi$ and $k:=2\deg(\Psi)\in\Z\setminus\{0\}$, and identify $\Psi$ with a map in $C^1(\mathbb S^1;\mathbb S^1)$ if $k$ is even.
The map $m$ satisfies the following additional properties depending on the value of $k$ :
\begin{itemize}
\item[(a)] If $|k|\notin\lbrace 1,2\rbrace$, then $\mathcal S=\emptyset$ and $m$ is constant along characteristic lines directed by $\Psi(m)$.
\item[(b)] If $|k|=2$, then in any convex subset $U\subset\Omega$ containing a singular point $x_0\in U\cap\mathcal S$, there exists $\tau\in\lbrace \pm 1\rbrace$ such that
\begin{align*}
\Psi(m(x))=\tau \frac{x-x_0}{|x-x_0|}\qquad\text{for a.e. } x\in U.
\end{align*}
\item[(c)] If $|k|=1$, then for any disk $B_{2r}(x_0)\subset\Omega$ centered at a singular point $\lbrace x_0\rbrace = B_{2r}(x_0) \cap\mathcal S$, we have $\Psi(m)=\lbrace \pm v\rbrace$ in $B_r(x_0)$, with $v$ as in Theorem~\ref{t:NE} (c).
\end{itemize}
\end{prop}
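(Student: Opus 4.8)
The plan is to reduce Proposition~\ref{p:rigid_m} to the known regularity and rigidity theorems for zero-energy states of (possibly unoriented) Aviles-Giga functionals, and then to settle case $(a)$ by a winding-number count. As in the proof of Lemma~\ref{l:lambdaPsi}, the hypothesis $\det(\partial_\theta^2\Gamma)\ne 0$ forces the phases of $\lambda$ and $\Psi$ to be strictly monotone, so that $\Psi$ is a $C^1$ local diffeomorphism. First I would invoke the reduction carried out in \S~\ref{ss:zerostates}: by Lemmas~\ref{l:conslawsC1} and \ref{l:entropiesAG}, the map $\tilde m=i\Psi(m)$ satisfies $\nabla\cdot\widetilde\Phi(\tilde m)=0$ for every eikonal entropy $\widetilde\Phi$ (every \emph{even} one in the case $k$ odd, where $\Psi$ is $\mathbb{RP}^1$-valued). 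Since $\mathrm{id}_{\mathbb S^1}$ is an eikonal entropy, for $k$ even this also yields $\nabla\cdot\tilde m=0$, so $\tilde m$ is an $\mathbb S^1$-valued zero-energy state of the Aviles-Giga energy as in \cite{JOP02}; for $k$ odd, $\tilde m$ is an $\mathbb{RP}^1$-valued zero-energy state of the unoriented Aviles-Giga energy as in \cite{GMPS23}. In either case the relevant structure theorem provides a locally finite set $\mathcal S\subset\Omega$ outside which $\tilde m$ is locally Lipschitz and locally constant along straight characteristic segments in the direction $i\tilde m$, together with a rigid normal form for $\tilde m$ on a neighbourhood of each $x_0\in\mathcal S$: a vortex $\pm(x-x_0)^\perp/|x-x_0|$ in the oriented case, and a vortex or a ``half-vortex'' in the unoriented case.

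Next I would transfer all of this to $m$. Because $\Psi$ is a $C^1$ local diffeomorphism, near any point where $\tilde m$ is continuous one has $m=\Psi^{-1}\circ(-i\tilde m)$, so $m$ is locally Lipschitz wherever $\tilde m$ is; conversely, if $m$ were Lipschitz near a point then $\tilde m=i\Psi(m)$ would be too. Hence $m$ and $\tilde m$ have the same singular set $\mathcal S$, $m$ is locally Lipschitz on $\Omega\setminus\mathcal S$, and $m$ is locally constant exactly along the constancy segments of $\tilde m$, which are directed by $i\tilde m=-\Psi(m)$, that is, by $\Psi(m)$. When $|k|\in\{1,2\}$, the map $\Psi$ is moreover a global $C^1$ diffeomorphism (Remark~\ref{r:Psi}) and $\Psi(m)=-i\tilde m$, so parts $(b)$ and $(c)$ follow by reading off the vortex, resp.\ vortex/half-vortex, normal form of $\tilde m$ from \cite{JOP02,GMPS23} and applying the rotation $z\mapsto -iz$.

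The heart of the matter is case $(a)$, where I must rule out singularities when $|k|\notin\{1,2\}$. Suppose $x_0\in\mathcal S$, and choose a small circle $\sigma$ around $x_0$ contained in $\Omega\setminus\mathcal S$ (possible since $\mathcal S$ is locally finite). On $\sigma$ the map $m$ is continuous and $\mathbb S^1$-valued, hence carries an \emph{integer} winding number $n=\deg(m|_\sigma)$; since $\deg(i\Psi)=\deg(\Psi)=k/2$, multiplicativity of the winding number under composition gives $\deg(\tilde m|_\sigma)=(k/2)\,n$. On the other hand, the rigid normal form of $\tilde m$ near $x_0$ forces $\deg(\tilde m|_\sigma)\in\{\pm1\}$ when $k$ is even, and $\deg(\tilde m|_\sigma)\in\{\pm\tfrac12,\pm1\}$ when $k$ is odd. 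Therefore $|k/2|\,|n|=1$ when $k$ is even, which forces $|k|=2$; and $|k|\,|n|\in\{1,2\}$ when $k$ is odd, which forces $|k|=1$ since $|k|$ is then odd. In both cases $|k|\in\{1,2\}$, a contradiction. Hence $\mathcal S=\emptyset$, $m$ is locally Lipschitz on $\Omega$, and by the previous paragraph $m$ (and thus $Du$) is constant along characteristic lines directed by $\Psi(m)$.

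I expect the main obstacle to lie upstream of this proposition: the non-trivial analytic input is Proposition~\ref{p:conslaws}, namely that all entropy productions vanish for the merely $B^{1/3}_{p,\infty,\loc}$-regular map $m$, which is proved separately in \S~\ref{s:conslaws}. Granting that, the only genuinely delicate step inside Proposition~\ref{p:rigid_m} is the compatibility count above --- that the intrinsic winding of the zero-energy state $\tilde m$ at a singularity must equal $k/2$ times the integer winding of the $\mathbb S^1$-valued map $m$ through which it factors --- together with the careful identification, already performed in \S~\ref{ss:zerostates}, of $\tilde m$ with an (un)oriented Aviles-Giga zero-energy state.
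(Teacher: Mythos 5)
Your reduction of cases (b) and (c), and the reduction to zero-energy Aviles-Giga states via Lemmas~\ref{l:conslawsC1} and \ref{l:entropiesAG}, follow the paper's route exactly. The gap is in case (a), and it occurs before your winding-number argument even begins. You assert that ``near any point where $\tilde m$ is continuous one has $m=\Psi^{-1}\circ(-i\tilde m)$, so $m$ is locally Lipschitz wherever $\tilde m$ is.'' When $|k|\ge 3$, the map $\Psi$ is only a \emph{local} diffeomorphism, so $\Psi^{-1}$ is multivalued with $|k|$ branches over each point of $\mathbb{RP}^1$. Knowing $\tilde m=i\Psi(m)$ is Lipschitz (or even constant) on an open set tells you nothing about which branch $m$ sits on at each point: $m$ could be an arbitrary measurable selection among the finitely many preimages, and in particular could be discontinuous everywhere. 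Hence $m$ and $\tilde m$ need not share a singular set, and the integer $\deg(m|_\sigma)$ that your count hinges on is not defined without first proving $m$ is continuous on $\sigma$ --- which is precisely what is at stake.

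The paper closes this gap by \emph{not} passing through $\tilde m$ for case (a). It exploits the fact that, when $|\deg(\Psi)|\ge 3/2$, each $\xi\in\mathbb{RP}^1$ has at least three preimages under $\Psi$, and uses the full family of conservation laws on $m$ (not just those inherited by $\tilde m$) to derive the kinetic-type equation \eqref{eq:kingen} via the generalized entropies of Lemma~\ref{l:entgen}. Lemma~\ref{l:arcalongsegment} then propagates membership of $m(x)$ in an arc $A_\xi$ along $\xi$-lines, and Lemma~\ref{l:disjoint_arcs} uses the abundance of preimages to choose, for any two values $z_1\ne z_2$ of $m$, disjoint arcs $A_{\xi_1}, A_{\xi_2}$ with $\dist_{\mathbb{RP}^1}(\xi_1,\xi_2)\gtrsim |z_1-z_2|$. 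A direct geometric argument (two transversal characteristics through distinct Lebesgue points would meet at a point where $m$ would need to lie in two disjoint arcs) yields an explicit interior Lipschitz bound on $m$ itself, with no singular set at all. Your winding-number heuristic is appealing, but it is supplementary rather than substitutive: it could only be invoked after establishing continuity of $m$ away from a small set, and that continuity is the actual content of case (a). If you want a fix along your lines, you would still need to import the kinetic-equation argument (or something equivalent using the surplus of preimages of $\Psi$) to first pin $m$ down to a single branch locally; once you know $m$ is continuous on small circles, the degree count does give a clean alternative proof that no singular points can exist.
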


Note that  in case $(b)$ where $|\deg(\Psi)|=1$, the map $\Psi$ induces a $C^1$ diffeomorphism $\mathbb S^1\to\mathbb S^1$.
And in case $(c)$ where $|\deg(\Psi)|=1/2$, the map $\Psi$ is a $C^1$ diffeomorphism 
$\mathbb S^1\to\mathbb {RP}^1$.
Therefore the proof of Theorem~\ref{t:NE} follows directly from Proposition~\ref{p:rigid_m} and Proposition \ref{p:conslaws}
via the identification $Du=\mathrm{cof}\:\Gamma(m)=\gamma(\theta)$ with $m=e^{i\theta}$.

In the next two subsections we provide the proof of Proposition~\ref{p:rigid_m}, 
first obtaining some regularity for $\Psi(m)$
 which readily implies the cases $(b)$ and $(c)$, and then dealing with the remaining case $(a)$.

%

\subsubsection{Regularity of $\Psi(m)$}

{\color{black}Let $m\colon\Omega\to\mathbb S^1$ be as in Proposition~\ref{p:rigid_m},
in particular it solves the family of conservations laws \eqref{eq:conslaws}.}
Thanks to Lemmas~\ref{l:conslawsC1} and \ref{l:entropiesAG}, the map $\ti m = i\Psi(m)$ solves $\na\cdot\widetilde\Phi(\ti m)=0$ for all $\widetilde\Phi\in C^1(\mathbb S^1;\R^2)$ such that 
$\partial_\theta\widetilde\Phi(e^{i\theta})\cdot e^{i\theta}=0$ (restricting to even maps $\widetilde\Phi$ in the case where $k$ is odd, i.e. $\Psi$ is $\mathbb{RP}^1$-valued).
As a consequence of \cite[Theorem~1.3]{JOP02} or \cite[Theorem~1.2]{GMPS23}, we conclude that $\Psi(m)$ is 
locally Lipschitz continuous outside a locally finite set $\mathcal S\subset\Omega$, and moreover:
\begin{itemize}
	\item if $k$ is even, i.e. $\Psi\in C^1(\mathbb S^1;\mathbb S^1)$,  then $v=\Psi(m)$ is  as in part $(b)$ of Proposition~\ref{p:rigid_m};
	\item if $k$ is odd, i.e. $\Psi\in C^1(\mathbb S^1;\mathbb{RP}^1)$, then $\Psi(m)=\lbrace \pm v\rbrace$ for a map $v\colon\Omega\to\mathbb S^1$ which is as in part $(c)$ of Proposition~\ref{p:rigid_m}.
\end{itemize}
In particular, this concludes the proof of Proposition~\ref{p:rigid_m} in cases $(b)$ and $(c)$, where $\Psi$ is a $C^1$ diffeomorphism, and it remains to treat case $(a)$.

\subsubsection{The case $|k|\notin \lbrace 1,2\rbrace$}
\label{ss:high_deg}

In that case, $\Psi$ is not injective, and the family of conservation laws \eqref{eq:conslaws} contains in fact much more information than the one used when applying the results of \cite{JOP02,GMPS23} to $\tilde m$. This is why we can expect more regularity.

One way of taking advantage of the extra information contained in \eqref{eq:conslaws} is to consider a specific family of nonsmooth entropies, similar to the ones 
used in \cite[Lemma~2.5]{DKMO01} and which are also the main tool in \cite{JOP02}. These nonsmooth entropies are related to kinetic formulations of conservation laws \cite{LPT94,perthame02}, and we will use them precisely via the kinetic equation \eqref{eq:kingen} they provide.

\begin{lem}\label{l:entgen}
For any $\xi\in\mathbb S^1$
 and any open arc $A_\xi \subset \mathbb S^1$ with extremities $a_\xi\neq b_\xi \in\Psi^{-1}(\lbrace \pm\xi\rbrace)$, the map $\Phi^\xi\colon\mathbb S^1\to\R^2$ given by
\begin{align*}
\Phi^\xi ( z)=  \xi\mathbf 1_{z\in A_\xi},
\end{align*}
is a generalized entropy in the sense that there exist $\Phi_k\in \mathrm{ENT}_\Gamma$ such that 
\begin{align*}
\Phi_k(z)\to \Phi^\xi(z)\qquad\text{as }k\to\infty,\quad\forall z\in\mathbb S^1.
\end{align*}
\end{lem}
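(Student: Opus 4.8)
The plan is to construct the entropies $\Phi_k\in\mathrm{ENT}_\Gamma$ explicitly by smoothing the indicator function $\mathbf 1_{z\in A_\xi}$ in the $\theta$-variable and checking that the resulting map lies in $\mathrm{ENT}_\Gamma$. Write $A_\xi=\Psi(\{e^{i\theta}:\theta\in(\theta_a,\theta_b)\})$ where $e^{i\theta_a}=a_\xi$, $e^{i\theta_b}=b_\xi$, and pick a phase $\varphi_\Psi$ as in Lemma~\ref{l:lambdaPsi}; since $\hat\Psi$ has strictly monotone phase, $a_\xi,b_\xi\in\Psi^{-1}(\{\pm\xi\})$ exactly means $\varphi_\Psi(\theta_a)$ and $\varphi_\Psi(\theta_b)$ differ from $\arg\xi$ by an integer multiple of $\pi$. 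I would let $\chi_k\colon\R\to[0,1]$ be a smooth $2\pi$-periodic (or, in the boundary case, just smooth on $[a,b]$) approximation of $\mathbf 1_{(\theta_a,\theta_b)}$, equal to $1$ on $[\theta_a+1/k,\theta_b-1/k]$ and $0$ outside $(\theta_a,\theta_b)$, and set $\Phi_k(e^{i\theta})=\xi\,\chi_k(\theta)$. Pointwise, $\Phi_k(z)\to\xi\mathbf 1_{z\in A_\xi}=\Phi^\xi(z)$ for every $z\in\mathbb S^1$ (the only possible discrepancy is at $z=a_\xi,b_\xi$, a null issue that one can handle by choosing $\chi_k$ to have the correct one-sided limits, or simply note convergence holds off those two points and adjust the value there).

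The substance is to verify $\Phi_k\in\mathrm{ENT}_\Gamma$, i.e. that $\partial_\theta\Phi_k$ is a $C^1$ combination of $\partial_\theta\Gamma_1$ and $\partial_\theta\Gamma_2$. By Remark~\ref{r:Psi} we have $\partial_\theta\Gamma=\hat\lambda\otimes\hat\Psi$, so the two rows are $\partial_\theta\Gamma_j=\hat\lambda_j\,\hat\Psi$, $j=1,2$, where $\hat\lambda=(\hat\lambda_1,\hat\lambda_2)\in\mathbb S^1$; hence the span of $\{\partial_\theta\Gamma_1(e^{i\theta}),\partial_\theta\Gamma_2(e^{i\theta})\}$ is exactly the line $\R\,\hat\Psi(\theta)$, and any vector-valued function of the form $g(\theta)\hat\Psi(\theta)$ with $g\in C^1$ is of the required form: write $g\hat\Psi=(g\hat\lambda_1)\partial_\theta\Gamma_1+(g\hat\lambda_2)\partial_\theta\Gamma_2$ since $\hat\lambda_1^2+\hat\lambda_2^2=1$, and the coefficients $g\hat\lambda_j$ are $C^1$. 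Now $\Phi_k(e^{i\theta})=\chi_k(\theta)\xi$ with $\chi_k$ constant ($\equiv 1$) precisely on the arc where $\hat\Psi$ sweeps out $A_\xi$; I must show $\partial_\theta\Phi_k=\chi_k'(\theta)\xi$ is a $C^1$ multiple of $\hat\Psi(\theta)$. This is where the endpoint condition $a_\xi,b_\xi\in\Psi^{-1}(\{\pm\xi\})$ enters: $\chi_k'$ is supported on the two small transition intervals near $\theta_a$ and $\theta_b$, and on those intervals $\hat\Psi(\theta)=\pm e^{i\varphi_\Psi(\theta)}$ with $\varphi_\Psi$ close to a value congruent to $\arg\xi$ mod $\pi$, so $\hat\Psi(\theta)$ is close to $\pm\xi$, hence nonzero in the $\xi$ direction; writing $\chi_k'(\theta)\xi=\beta_k(\theta)\hat\Psi(\theta)$ defines $\beta_k(\theta)=\chi_k'(\theta)\,\xi\cdot\hat\Psi(\theta)/|\hat\Psi(\theta)|^2=\chi_k'(\theta)\,(\xi\cdot\hat\Psi(\theta))$ on the support of $\chi_k'$ and $\beta_k\equiv 0$ elsewhere, and this is $C^1$ provided the transition intervals are taken so small (shrinking them further with $k$ if necessary) that $\xi\cdot\hat\Psi$ stays away from $0$ there. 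Strict monotonicity of $\varphi_\Psi$ guarantees such intervals exist.

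The main obstacle I anticipate is precisely this smoothness/nonvanishing bookkeeping at the two endpoints $a_\xi,b_\xi$: one must choose the transition regions of $\chi_k$ shrinking to $\{\theta_a\}$ and $\{\theta_b\}$ so that $\hat\Psi(\theta)$ remains within, say, a cone of half-angle $\pi/4$ around $\pm\xi$ throughout, ensuring $\xi\cdot\hat\Psi(\theta)\neq 0$ and hence that $\beta_k$ extends by $0$ to a genuinely $C^1$ (indeed $C^\infty$) function on $I$; the periodicity of $\beta_k$ in the closed-curve case is automatic since $\chi_k$ is $2\pi$-periodic and $\hat\Psi$, though possibly only anti-periodic when $k$ is odd, enters only through $\xi\cdot\hat\Psi$ on small intervals where the sign ambiguity is irrelevant because $\chi_k'$ is supported away from the identification and we may simply regard $\chi_k$ as a genuine $2\pi$-periodic function on $\R/2\pi\Z$ from the start. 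Once $\Phi_k\in\mathrm{ENT}_\Gamma$ and the pointwise convergence $\Phi_k\to\Phi^\xi$ are established, the lemma follows. A brief remark: the boundary case $J\subsetneq\mathbb S^1$ is easier, as no periodicity is needed and $\chi_k$ is just a smooth bump on $[a,b]$.
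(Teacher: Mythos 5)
Your construction $\Phi_k(e^{i\theta})=\xi\,\chi_k(\theta)$ does not lie in $\mathrm{ENT}_\Gamma$, and the step where you assert this breaks down. You correctly observe that membership in $\mathrm{ENT}_\Gamma$ requires $\partial_\theta\Phi_k(e^{i\theta})=\chi_k'(\theta)\,\xi$ to be a scalar multiple of $\hat\Psi(\theta)$ (since the span of $\partial_\theta\Gamma_1,\partial_\theta\Gamma_2$ at $e^{i\theta}$ is exactly $\R\,\hat\Psi(\theta)$). But $\chi_k'(\theta)\,\xi$ points in the fixed direction $\xi$, while $\hat\Psi(\theta)$ is parallel to $\xi$ only at the isolated points $\theta$ with $\hat\Psi(\theta)\in\{\pm\xi\}$ (a finite set, by strict monotonicity of $\varphi_\Psi$). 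On the transition intervals where $\chi_k'\neq 0$, the two vectors are \emph{not} parallel, so the equation $\chi_k'(\theta)\,\xi=\beta_k(\theta)\,\hat\Psi(\theta)$ has no solution $\beta_k(\theta)\in\R$. The formula $\beta_k=\chi_k'\,(\xi\cdot\hat\Psi)$ you write down is merely the coefficient of the \emph{orthogonal projection} of $\chi_k'\xi$ onto $\hat\Psi$; the orthogonal component is silently discarded, and it does not vanish no matter how small you make the transition intervals. Having $\xi\cdot\hat\Psi$ bounded away from $0$ is not the obstruction; exact parallelism is, and you cannot get it on an open interval.

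This is precisely why the paper's proof goes the other way round: rather than smoothing the indicator and then trying to factor its derivative through $\hat\Psi$, it first prescribes coefficients $\alpha^1_\delta,\alpha^2_\delta\in C^1$ (mollified Diracs at $\theta_a,\theta_b$ with the normalizations $e^{it_0}\cdot\hat\lambda(\theta_{a,b})$), sets $\partial_\theta\Phi_\delta^\xi=\sum_j\alpha^j_\delta\,\partial_\theta\Gamma_j$ by definition — so membership in $\mathrm{ENT}_\Gamma$ is automatic — and then proves the integrated map converges pointwise to the indicator $\Phi^\xi$ as $\delta\to 0$, using that the integrand concentrates at $\theta_a,\theta_b$ where $\hat\Psi=\tau_\xi\xi$ and $\hat\Psi=\sigma_\xi\xi$ respectively, followed by a correction (Lemma~\ref{l:modif}) to restore periodicity. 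If you wanted to salvage your attempt you would have to redefine $\Phi_k$ as $\int\beta_k\hat\Psi\,d\theta$ (which \emph{is} in $\mathrm{ENT}_\Gamma$), but then $\Phi_k\neq\xi\chi_k$ and the pointwise convergence $\Phi_k\to\Phi^\xi$ becomes a new claim that needs the same concentration analysis as in the paper — at which point you have essentially rediscovered the paper's argument, minus the crucial normalization factors that make the limit come out right.
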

\begin{rem}
In the case where $\Psi$ is a diffeomorphism, there are only one ($|k|=1$) or two ($|k|=2$) choices for $a_\xi$ and $b_\xi$.
But here  $\Psi$ is not injective, $|\deg(\Psi)|=d\geq 3/2$, so $a_\xi$ and $b_\xi$ can be chosen among $2d\geq 3$ points, and this is where we gain a lot of information.
\end{rem}

\begin{proof}[Proof of Lemma~\ref{l:entgen}] The proof is very close to the proof of \cite[Lemma 15]{LLP22}.
Fix $\theta_a,\theta_b\in\R$ such that
\begin{align*}
a_\xi=e^{i\theta_a},\quad b_\xi=e^{i\theta_b},\quad \theta_a <\theta_b <\theta_a +2\pi,
\end{align*}
and denote
\begin{align*}
\hat \Psi(\theta_a)=\tau_\xi \,\xi,\quad \hat\Psi(\theta_b)=\sigma_\xi\, \xi,\qquad \tau_\xi,\sigma_\xi\in\lbrace \pm 1\rbrace.
\end{align*}
We may choose
 $t_0\in\R$ such that
\begin{align*}
\lbrace \hat\lambda\cdot e^{it_0} =0 \rbrace \cap \lbrace \theta_a,\theta_b\rbrace =\emptyset.
\end{align*} 
We fix a smooth nonnegative kernel $\rho\in C_c^\infty(\R)$ with support $\supp\rho\subset (0,1)$ and unit integral $\int\rho=1$, and let $\rho_{\delta}(\theta)=\frac 1\delta \rho(\frac\theta\delta)$.
Then we define 
 $\alpha_\delta^1, \alpha_\delta^{2}\in C^\infty(\R/2\pi\Z;\R)$ by setting
\begin{align*}
\alpha_\delta^1(\theta)
&=\frac{\cos(t_0)}{e^{it_0}\cdot\hat\lambda(\theta_a)}
\tau_\xi\,\rho_{\delta}\lt(\theta-\theta_a\rt)
-\frac{\cos(t_0)}{e^{it_0}\cdot\hat\lambda(\theta_b)}
\sigma_\xi\, \rho_{\delta}\lt(\theta_b-\theta\rt),
\\
\alpha_\delta^2(\theta)
&=\frac{\sin(t_0)}{e^{it_0}\cdot\hat\lambda(\theta_a)}
\tau_\xi\,\rho_{\delta}\lt(\theta-\theta_a\rt)
-\frac{\sin(t_0)}{e^{it_0}\cdot\hat\lambda(\theta_b)}
\sigma_\xi\, \rho_{\delta}\lt(\theta_b-\theta\rt),
\quad\forall \theta\in (\theta_a,\theta_a +2\pi ],
\end{align*}
and extended as $2\pi$-periodic functions.
Note that these functions are supported in $(\theta_a,\theta_a+\delta)\cup (\theta_b-\delta,\theta_b)+2\pi\Z$.

Then we define 
$\Phi^{\xi}_\delta\colon
(\theta_a,\theta_a +2\pi]\to\R^2$ 
by setting
\begin{align*}
\Phi^{\xi}_\delta(\theta)
=\int_{\theta_a}^{\theta} 
\Big( 
\alpha_\delta^1(t)\partial_{t}\Gamma_1(e^{it})
+ 
\alpha_\delta^2(t)\partial_{t}\Gamma_2(e^{it})
\Big)\,
 dt \qquad\forall\theta \in \lt(\theta_a, \theta_a+2\pi\rt].
\end{align*}
Using the identity \eqref{eq:lambdaPsi} defining $\hat\lambda$ and  $\hat\Psi$, we see that
it satisfies
\begin{align*}
\Phi^{\xi}_\delta(\theta)
&
=\tau_\xi \int \frac{e^{it_0}\cdot\hat\lambda(t)}{e^{it_0}\cdot\hat\lambda(\theta_a)}\hat\Psi(t)\rho_\delta(t-\theta_a)\, dt,
\qquad\text{if }\theta\in [\theta_a+\delta,\theta_b-\delta ],\\
\Phi^{\xi}_\delta(\theta)
&
=\tau_\xi \int \frac{e^{it_0}\cdot\hat\lambda(t)}{e^{it_0}\cdot\hat\lambda(\theta_a)}\hat\Psi(t)\rho_\delta(t-\theta_a)\, dt
\\
&\quad
-
\sigma_\xi \int \frac{e^{it_0}\cdot\hat\lambda(t)}{e^{it_0}\cdot\hat\lambda(\theta_b)}\hat\Psi(t)\rho_\delta(\theta_b-t)\, dt,
\qquad\text{if }\theta\in [\theta_b,\theta_a +2\pi ].
\end{align*}
Since
$\tau_\xi \hat\Psi(\theta_a)
=\sigma_\xi\hat\Psi(\theta_b)=\xi$
by definition of $\tau_\xi,\sigma_\xi$, 
we deduce the limit
\begin{align}\label{eq:limPhideltaxi}
\lim_{\delta\to 0}
\Phi_\delta^{\xi}(\theta)
&=
\begin{cases}
\xi  &\quad\text{ if }\theta\in \lt(\theta_a,\theta_b\rt) ,\\
0
&\quad \text{ if }\theta \in \lt[\theta_b,\theta_a+2\pi\rt].\\
\end{cases}
\end{align}
This corresponds exactly to $\Phi^\xi(e^{i\theta})$.

The function $\Phi_\delta^{\xi}$
may not be $2\pi$-periodic, that is why  we need to modify the functions $\alpha_\delta^j$.
Since, by the above,
\begin{align*}
\int_{\R/2\pi\Z}\left(\sum_{j=1}^2\alpha_\delta^j
\partial_\theta\Gamma_j\right) \, d\theta 
=
\Phi_\delta^\xi(\theta_a+2\pi)\to 0\quad\text{as }\delta\to 0,
\end{align*}
by Lemma~\ref{l:modif} there exist $\tilde\alpha^j_\delta\in C^1(\mathbb S^1;\R)$ such that $(\tilde\alpha^j_\delta-\alpha_\delta^j)$ tends to 0 uniformly as $\delta\to 0$, and
$\sum_{j=1}^2\ti{\alpha}_\delta^j
\partial_\theta\Gamma_j 
$
has zero average on $\mathbb S^1$. Then the map 
\begin{align*}
\widetilde\Phi_\delta^\xi(e^{i\theta})
=\int_{\theta_a}^{\theta} 
\Big( 
\tilde\alpha_\delta^1(e^{it})\partial_{t}\Gamma_1(e^{it})
+ 
\tilde\alpha_\delta^2(e^{it})\partial_{t}\Gamma_2(e^{it})
\Big)\, dt 
\end{align*}
is well-defined,  $\widetilde\Phi_\delta^\xi\in \mathrm{ENT}_\Gamma$, and $\widetilde\Phi_\delta^\xi(e^{i\theta})-\Phi_\delta^\xi(\theta)\to 0$  as $\delta\to 0$, for all $\theta\in (\theta_a,\theta_a+2\pi]$.
Thanks to \eqref{eq:limPhideltaxi}, we deduce that $\widetilde\Phi_\delta^\xi(z)\to\Phi^\xi(z)$ as $\delta\to 0$, for all $z\in\mathbb S^1$.
\end{proof}

Combining Lemma~\ref{l:entgen} and \eqref{eq:conslaws}, we see by dominated convergence that for every $\xi\in\mathbb S^1$ and $ A_\xi $ an open arc with extremities $a_\xi\neq b_\xi\in\Psi^{-1}(\lbrace \pm\xi\rbrace)$, we have
\begin{align}\label{eq:kingen}
\xi\cdot\nabla_x \mathbf 1_{m(x)\in A_\xi} =0 \qquad\text{in }\mathcal D'(\Omega).
\end{align}
We deduce the following:

\begin{lem}\label{l:arcalongsegment}
Let $x_1$ be a Lebesgue point of $m$ and $x_2\ne x_1$ be such that $[x_1,x_2]\subset\Omega$. Let $\xi=\frac{x_2-x_1}{|x_2-x_1|}$. Then, for any open arc $A_\xi \subset \mathbb{S}^1$ with extremities $a_\xi\neq b_\xi\in\Psi^{-1}(\lbrace \pm\xi\rbrace)$, we have
\begin{align*}
	m(x_1)\in A_\xi \quad \Longrightarrow \quad\text{the set } \{x: m(x)\in A_{\xi}\}\text{ has density }1\text{ at }x_2.
\end{align*}
\end{lem}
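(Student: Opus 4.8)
The plan is to exploit the kinetic equation \eqref{eq:kingen}, which gives, for our fixed $\xi=\frac{x_2-x_1}{|x_2-x_1|}$ and any admissible arc $A_\xi$ with endpoints in $\Psi^{-1}(\{\pm\xi\})$, that the characteristic function $\mathbf 1_{m\in A_\xi}$ is transported in the direction $\xi$: $\xi\cdot\nabla_x\mathbf 1_{m(x)\in A_\xi}=0$ in $\mathcal D'(\Omega)$. Morally this says $\mathbf 1_{m\in A_\xi}$ is constant along segments parallel to $\xi$, so if it equals $1$ at the Lebesgue point $x_1$ it should equal $1$ at $x_2$. The work is to make this rigorous at the level of density-one statements for merely $L^\infty$ (in fact $B^{1/3}_{p,\infty}$) maps $m$, where pointwise values along a single segment are not literally defined.

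First I would localize: choose a small ball $B=B_\rho(x_2)$ with $[x_1,x_2]\subset\Omega$ and such that the truncated slanted cylinder $C=\{x_1+s\xi+y:\ s\in(0,|x_2-x_1|),\ y\perp\xi,\ |y|<\rho\}$ together with $B_\rho(x_1)$ stays inside $\Omega$. Set $f=\mathbf 1_{m\in A_\xi}\in L^\infty(\Omega;\{0,1\})$. From $\xi\cdot\nabla f=0$ in $\mathcal D'(\Omega)$, standard slicing for transport equations (mollify in the $\xi$ direction, or use that $\partial_s g=0$ distributionally on the cylinder forces $g$ to be independent of $s$ on a.e. slice) shows that for a.e. $y$ in the disk $\{|y|<\rho\}\cap\xi^\perp$ the slice $s\mapsto f(x_1+s\xi+y)$ agrees a.e. on $(0,|x_2-x_1|)$ with a constant $c(y)\in\{0,1\}$. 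The density-one hypothesis enters through the two ends: at $x_1$, because $x_1$ is a Lebesgue point of $m$ with $m(x_1)\in A_\xi$ and $A_\xi$ is open (so $m$ is in $A_\xi$ with density one at $x_1$), we get $f\to 1$ in $L^1$-average over small balls at $x_1$, which forces $c(y)=1$ for a.e. $y$ near $0$; a Fubini argument over the nested cone of segments from a full-measure subset of $B_\rho(x_1)$ to points near $x_2$ then propagates $c(y)=1$ to a.e. $y$ in $\{|y|<\rho'\}$ for some $\rho'>0$, hence $f=1$ a.e. on a full cylinder around $[x_1,x_2]$, and in particular $f$ has density one at $x_2$, i.e. $\{m\in A_\xi\}$ has density $1$ at $x_2$.

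A cleaner route, which I would actually carry out, avoids slicing subtleties: test \eqref{eq:kingen} against translates of a mollifier. Fix $\varphi\in C_c^\infty(B_1)$, $\varphi\ge 0$, $\int\varphi=1$, and for $r>0$ set $f_r(x)=\int f(x-ry)\varphi(y)\,dy$. Then $f_r\in C^\infty$, $0\le f_r\le 1$, and $\xi\cdot\nabla f_r=0$ everywhere, so $t\mapsto f_r(x_1+t\xi)$ is constant on the maximal interval for which the segment plus $rB_1$ stays in $\Omega$; choosing $r$ small this interval contains $[0,|x_2-x_1|]$. Hence $f_r(x_1)=f_r(x_2)$ for all small $r$. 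Since $x_1$ is a Lebesgue point of $m$ and $m(x_1)$ lies in the open arc $A_\xi$, we have $f_r(x_1)\to 1$ as $r\to 0$ (Lebesgue differentiation applied to $f=\mathbf 1_{m\in A_\xi}$, using openness of $A_\xi$ to conclude $f=1$ in density at $x_1$). Therefore $f_r(x_2)\to 1$. But $f_r(x_2)=\fint$-type averages of $f$ over $B_r(x_2)$ up to the weight $\varphi$; comparing with the plain average and using $0\le f\le 1$, one deduces $\fint_{B_r(x_2)}(1-f)\to 0$, i.e. $f=1$ with density $1$ at $x_2$, which is the claim.

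The main obstacle is the first, genuinely rigorous step: turning the distributional transport identity $\xi\cdot\nabla f_r=0$ into the pointwise statement $f_r(x_1+t\xi)=\text{const}$ on a segment that reaches $x_2$ while the support condition for the mollification is respected — this is where the geometric hypothesis $[x_1,x_2]\subset\Omega$ (an open set) is essential, since it guarantees a uniform distance $\dist([x_1,x_2],\partial\Omega)>0$ so that for $r$ below this distance $f_r$ is defined and smooth on a neighborhood of the whole segment, and the ODE $\partial_t f_r=0$ along it is legitimate. The remaining steps — Lebesgue differentiation at $x_1$ exploiting that $A_\xi$ is open, and passing from the weighted average $f_r(x_2)\to 1$ back to a density statement using $0\le f\le 1$ — are routine. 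Note that no further structure of $m$ beyond $m\in L^\infty$ and \eqref{eq:kingen} is needed here; the Besov regularity was only used upstream to derive \eqref{eq:conslaws} and hence \eqref{eq:kingen}.
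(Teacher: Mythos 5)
Your proof is correct and, via your ``cleaner route,'' takes essentially the same path as the paper (which simply points to \cite[Proposition~3.1]{JOP02}): exploit the transport identity \eqref{eq:kingen} and the Lebesgue point at $x_1$, together with openness of $A_\xi$, to propagate density one to $x_2$; the only difference is that the paper slices transversally to $\xi$ and reduces to a one-dimensional Lebesgue point, while you mollify with a two-dimensional kernel and use constancy of the smooth mollification along the segment --- both standard implementations of constancy along characteristics. One small imprecision in your first sketch: the Lebesgue point at $x_1$ gives that the slice function $c(y)$ has a Lebesgue point at $y=0$ with value $1$, not that $c=1$ a.e.\ near $0$ --- which is already enough, so the ``Fubini over nested cones'' step you propose there is unnecessary; your second route avoids this entirely.
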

\begin{proof}[Proof of Lemma~\ref{l:arcalongsegment}]
The proof is exactly the same as the proof of \cite[Proposition~3.1]{JOP02}. We include some details for the convenience of the reader. According to \eqref{eq:kingen}, the function $\chi^\xi$ given by
\begin{align*}
\chi^\xi(x)=\mathbf 1_{m(x)\in A_\xi},
\end{align*}
is constant in the direction of $\xi$ in a neighborhood of the line segment $\lt[x_1,x_2\rt]$, that is,
$\chi^\xi(x)=\tilde\chi(x\cdot i\xi)$ for a.e. $x$ 
in a $\delta$-neighborhood of $\lt[x_1,x_2\rt]$ 
for some $\delta>0$ 
and some measurable function $\tilde \chi \colon (t_1-\delta,t_1+\delta)\to\lbrace 0,1\rbrace$, 
where $t_1=x_1\cdot i\xi=x_2\cdot i\xi$.  
Note that since $A_{\xi}$ is an open set and $x_1$ is a Lebesgue point of $m$, so we have that $t_1=x_1\cdot i \xi$ is a Lebesgue point of $\tilde\chi$ and $\tilde\chi\lt(t_1\rt)=1$. 
It follows that the set $\lt\{x: m(x)\in A_{\xi}\rt\}$ has density $1$ at $x_2$.
\end{proof}

What makes Lemma~\ref{l:arcalongsegment} more powerful in the case where $\Psi$ has a high winding number is that there are several different choices of open arcs $A_\xi$.
We use this flexibility in the following form.

\begin{lem}\label{l:disjoint_arcs}
There is a constant $c\in (0,1)$ depending on the map $\Psi$, with the following property.
For any $z_1 \neq z_2\in\mathbb S^1$, there exist $\xi_1 \neq \xi_2\in\mathbb{RP}^1$ and open arcs $A_{\xi_1},A_{\xi_2}\subset\mathbb S^1$ with extremities in 
$\Psi^{-1}(\xi_1 ),\Psi^{-1}(\xi_2)$, such that:
\begin{align*}
&
z_1\in A_{\xi_1},\; z_2\in A_{\xi_2},
\quad
A_{\xi_1}\cap A_{\xi_2} =\emptyset,
\\
\text{and}
\quad
&
\dist_{\mathbb{RP}^1}(\xi_1,\xi_2)\geq c |z_1-z_2|.
\end{align*}
\end{lem}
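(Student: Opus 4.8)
The plan is to reduce the statement to a clean model by reparametrizing $\mathbb S^1$ through the phase $\varphi_\Psi$ of $\Psi$ produced in Lemma~\ref{l:lambdaPsi}; recall that throughout \S~\ref{ss:high_deg} we have $|k|=2|\deg(\Psi)|\geq 3$ and $|\varphi_\Psi'|>0$. After reflecting $\theta\mapsto-\theta$ if needed we may assume $\varphi_\Psi'>0$, and then $\varphi_\Psi\colon\R\to\R$ is a proper $C^1$ map with nonvanishing derivative, hence a $C^1$ diffeomorphism, satisfying $\varphi_\Psi(\theta+2\pi)=\varphi_\Psi(\theta)+k\pi$. It therefore descends to a $C^1$ diffeomorphism $\mathbb S^1=\R/2\pi\Z\cong\mathcal C:=\R/k\pi\Z$ under which $\Psi$ becomes the standard quotient map $q\colon\mathcal C\to\R/\pi\Z=\mathbb{RP}^1$. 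Writing $m_1:=\min|\varphi_\Psi'|>0$ and $M_1:=\max|\varphi_\Psi'|$, the inverse diffeomorphism $\mathcal C\to\mathbb S^1$ has derivative of modulus in $[1/M_1,1/m_1]$, so for $z_1\neq z_2\in\mathbb S^1$ with phases $\varphi_1,\varphi_2\in\mathcal C$ one gets $\delta:=\dist_{\mathcal C}(\varphi_1,\varphi_2)\geq m_1\dist_{\mathbb S^1}(z_1,z_2)\geq m_1|z_1-z_2|$, while also $\delta\leq|k|\pi/2$ and $|z_1-z_2|\leq 2$. In the model, $\Psi^{-1}(\xi)=q^{-1}(\xi)$ is a set of $|k|$ equispaced points of $\mathcal C$, hence an open arc of $\mathbb S^1$ with extremities in $\Psi^{-1}(\xi)$ is (the image of) a $\varphi$-interval of length $j\pi$ with $j\in\{1,\dots,|k|-1\}$, carrying the label $\xi$ given by the class modulo $\pi$ of either endpoint.

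Next I would fix representatives with $\varphi_2-\varphi_1=\delta$ and look for the elementary arcs $A_1=(\varphi_1-s_1,\varphi_1-s_1+\pi)$ and $A_2=(\varphi_2-s_2,\varphi_2-s_2+\pi)$ with $s_1,s_2\in(0,\pi)$, so that $z_i\in A_i$, with labels $\xi_i=[\varphi_i-s_i]$. Setting $w:=\delta+s_1-s_2$ (the offset of $A_2$ relative to $A_1$), as $s_1,s_2$ run over $(0,\pi)$ the number $w$ runs over the open interval $(\delta-\pi,\delta+\pi)$, and a direct inspection shows: $A_1\cap A_2=\emptyset$ iff $w\bmod k\pi\in[\pi,(|k|-1)\pi]$; $\xi_1\neq\xi_2$ iff $w\notin\pi\Z$ (arcs that touch at one endpoint necessarily share a label); and $\dist_{\mathbb{RP}^1}(\xi_1,\xi_2)=\dist(w,\pi\Z)$. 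So the whole lemma reduces to exhibiting $w\in(\delta-\pi,\delta+\pi)$ with $w\bmod k\pi\in[\pi,(|k|-1)\pi]$ and $\dist(w,\pi\Z)\geq\tfrac12\min(\delta,\pi)$ (which automatically gives $w\notin\pi\Z$); the matching $s_1,s_2\in(0,\pi)$ are then recovered from $s_1-s_2=w-\delta\in(-\pi,\pi)$.

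This last, purely one-dimensional step is elementary and is precisely where $|k|\geq 3$ is used (together with $0<\delta\leq|k|\pi/2$). If $\delta\leq\pi$, take $w=\pi+\delta/2\in(\pi,3\pi/2]\subset[\pi,(|k|-1)\pi]$; then $w\in(\delta-\pi,\delta+\pi)$ and $\dist(w,\pi\Z)=\delta/2$. If $\delta>\pi$, take $w$ to be a point of $(\Z+\tfrac12)\pi$ closest to $\delta$: then $|w-\delta|\leq\pi/2$ (so $w\in(\delta-\pi,\delta+\pi)$), $\dist(w,\pi\Z)=\pi/2$, and $\pi<\tfrac{3\pi}{2}\leq w\leq\delta+\tfrac{\pi}{2}\leq\tfrac{(|k|+1)\pi}{2}\leq(|k|-1)\pi$ since $|k|\geq 3$ (in the borderline case $|k|=3$ one has $\delta\leq 3\pi/2$, forcing $w=3\pi/2<2\pi$ anyway). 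In both cases $\dist_{\mathbb{RP}^1}(\xi_1,\xi_2)\geq\tfrac12\min(\delta,\pi)\geq\tfrac12\min(m_1,\tfrac{\pi}{2})|z_1-z_2|=:c|z_1-z_2|$ with $c=\tfrac12\min(m_1,\tfrac{\pi}{2})\in(0,1)$ depending only on $\Psi$, which is the desired conclusion.

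The one delicate point, which I expect to be essentially the only obstacle, is the interplay between disjointness and the requirement $\xi_1\neq\xi_2$: two elementary arcs that merely touch at one endpoint are disjoint as open arcs yet automatically carry the same label, so the set of admissible offsets $w$ is open-ended rather than closed, and one must be sure the selected $w$ lies strictly inside it and away from $\pi\Z$. This is exactly what cannot be done when $|k|\in\{1,2\}$ (where $\Psi$ is injective and $\Psi^{-1}(\xi)$ has at most two points), and it is why the hypothesis $|\deg(\Psi)|\geq 3/2$ is needed here, with $|k|=3$ the tight case.
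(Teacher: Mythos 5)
Your proof is correct, and it takes a genuinely cleaner route than the paper's. The paper works directly in the $\theta$-variable with the preimage angle functions $\alpha_\ell$: it picks a reference direction $\xi_0$ by a soft continuity argument, identifies indices $\ell_1<\ell_2<\ell_1+k$ so that each $\theta_j$ sits inside an elementary arc for $\xi_0$ with a margin $c_1(\theta_2-\theta_1)$, and then perturbs $\xi_0$ to two nearby directions $\xi_1,\xi_2$ with $\dist_{\mathbb{RP}^1}(\xi_1,\xi_2)\gtrsim\theta_2-\theta_1$ while keeping the resulting arcs disjoint (using $k\geq 3$ to ensure either $\alpha_{\ell_1+1}+c_0<\alpha_{\ell_2}$ or $\alpha_{\ell_2+1}+c_0<\alpha_{\ell_1+k}$). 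You instead push the whole problem through the $C^1$ diffeomorphism $\varphi_\Psi\colon\mathbb S^1\to\R/k\pi\Z$ that trivializes $\Psi$ into the standard quotient $q\colon\R/k\pi\Z\to\R/\pi\Z$, so that elementary arcs become length-$\pi$ intervals, disjointness becomes $w\bmod k\pi\in[\pi,(|k|-1)\pi]$, the label condition becomes $w\notin\pi\Z$, and the $\mathbb{RP}^1$-distance becomes $\dist(w,\pi\Z)$; the hypothesis $|k|\geq 3$ is then used in a transparent one-line check that a suitable $w$ exists in $(\delta-\pi,\delta+\pi)$. The trade-off: your model argument isolates where $|k|\geq3$ is truly needed and makes the disjointness/label bookkeeping mechanical, while introducing the Lipschitz constants $m_1,M_1$ of $\varphi_\Psi$ explicitly; the paper's argument stays in the original coordinates and bundles the same Lipschitz information into the unnamed constants $c_0,c_1,c$. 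Both produce a constant of the same quality, $c\sim\min(\inf|\varphi_\Psi'|,1)$.
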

\begin{proof}[Proof of Lemma~\ref{l:disjoint_arcs}]
We first set some notations. 
Recall that  $\Psi\colon\mathbb S^1\to\mathbb{RP}^1$ is $C^1$ with uniformly monotone phase and winding number $k/2$ with $|k|\geq 3$.
Let us assume that the phase is increasing and $k\geq 3$, 
the case of a decreasing phase and $k\leq -3$ being completely similar.
For any $\xi =\lbrace \pm e^{i\beta}\rbrace\in\mathbb{RP}^1$ we can write the preimage $\Psi^{-1}(\xi)$ as
\begin{align*}
\Psi^{-1}(\lbrace\pm e^{i\beta}\rbrace)=\lbrace e^{i\alpha_\ell(\beta)}\rbrace_{\ell\in\mathbb Z},
\end{align*}
where the angle functions $\alpha_\ell\in C^1(\R;\R)$, $\ell\in\Z$, 
are uniformly increasing and satisfy
\begin{align*}
\alpha_\ell + c_0 <\alpha_{\ell+1},\qquad \alpha_{\ell +k}=\alpha_\ell +2\pi,\qquad\forall \ell\in\Z,
\end{align*}
for some small enough constant $c_0 >0$ depending on $\Psi$.
The functions $\alpha_\ell$ can simply be chosen as the inverses of the functions $\varphi_\Psi -\ell\pi$, where $\varphi_\Psi\in C^1(\R;\R)$ is a uniformly increasing phase of $\Psi$ as in Lemma~\ref{l:lambdaPsi}.
They satisfy also $\alpha_\ell(\cdot +\pi)=\alpha_{\ell + 1}(\cdot)$.

Without loss of generality we assume that $z_1=e^{i\theta_1}$, $z_2=e^{i\theta_2}$ for some
$\theta_1<\theta_2\leq\theta_1 +\pi$, so that $|z_1-z_2|$ is of the order of $\theta_2-\theta_1$. 
By a continuity argument, we can find $\xi_0=\lbrace\pm e^{i\beta_0}\rbrace\in\mathbb{RP}^1$ and $\ell_1,\ell_2\in\Z$ such that $\ell_1 <\ell_2 <\ell_1+ k$ and
\begin{align*}
\theta_j \in [\alpha_{\ell_j}(\beta_0) + c_1 (\theta_2-\theta_1) ,\alpha_{\ell_j +1}(\beta_0)-c_1(\theta_2-\theta_1)],\quad j=1,2,
\end{align*}
for some small enough constant $c_1 >0$ depending on $\Psi$.
 
Since $k\geq 3$, we must have
either $\alpha_{\ell_1+1}(\beta_0) +c_0 < \alpha_{\ell_2}(\beta_0)$ or $\alpha_{\ell_2 +1}(\beta_0)+c_0<\alpha_{\ell_1 +k}(\beta_0)$.
We assume that we are in the first case,
\begin{align*}
\alpha_{\ell_1+1}(\beta_0) +c_0 < \alpha_{\ell_2}(\beta_0),
\end{align*}
 the other case being completely similar.

For $\xi\in\mathbb{RP}^1$ such that $\dist_{\mathbb{RP}^1}(\xi, \xi_0)<\pi/4$, 
we can write $\xi=\lbrace \pm e^{i\beta}\rbrace$ 
for some $\beta\in (\beta_0-\pi/4,\beta_0+\pi/4)$,
and the angles $\alpha_\ell$ are uniformly increasing functions of $\beta$. 
If $c_1$ is small enough,
  then we can choose 
  $\xi_1=\lbrace \pm e^{i\beta_1}\rbrace$, $\xi_2=\lbrace \pm e^{i\beta_2}\rbrace$  with 
  $\beta_1,\beta_{2}\in (\beta_0-\pi/4,\beta_0+\pi/4)$ and
\begin{align*}
\alpha_{\ell_1}(\beta_1) = \alpha_{\ell_1}(\beta_0)+\frac12 c_1(\theta_2-\theta_1), \quad \alpha_{\ell_2+1}(\beta_2) = \alpha_{\ell_2+1}(\beta_0)-\frac12 c_1(\theta_2-\theta_1).
\end{align*}
It follows that
\begin{align*}
\beta_1-\beta_2 = \lt(\beta_1-\beta_0\rt)+\lt(\beta_0-\beta_2\rt)\geq c(\theta_2-\theta_1),
\end{align*}
for some small enough constant $c$ depending on $\Psi$. Choosing $c_1$ smaller compared to $c_0$ if necessary, we may assume $\alpha_{\ell_1+1}(\beta_1)  < \alpha_{\ell_2}(\beta_2)$, so that
\begin{align*}
\alpha_{\ell_1}(\beta_1)< \theta_1 < \alpha_{\ell_1+1}(\beta_1)  < \alpha_{\ell_2}(\beta_2)<\theta_2 < \alpha_{\ell_2 +1}(\beta_2).
\end{align*}
Thus we have
\begin{align*}
\dist_{\mathbb{RP}^1}(\xi_1, \xi_2) = \beta_1-\beta_2\geq c (\theta_2-\theta_1) 
\gtrsim c |z_1-z_2|,
\end{align*}
and 
\begin{align*}
z_j\in A_{\xi_j}:=\exp(i (\alpha_{\ell_j}(\beta_j),\alpha_{\ell_j +1}(\beta_j)) ),\quad j=1,2,
\end{align*}
where the open arcs $A_{\xi_j}$ have their extremities in $\Psi^{-1}(\xi_j)$.

Moreover, since
$\alpha_{\ell_1+1}(\beta_1)  < \alpha_{\ell_2}(\beta_2)$, and
 by monotonicity of the $\alpha_\ell$ we also have
\begin{align*}
\alpha_{\ell_2+1}(\beta_2)<\alpha_{\ell_2+1}(\beta_0) 
\leq
\alpha_{\ell_1 +k}(\beta_0) =\alpha_{\ell_1}(\beta_0) +2\pi <\alpha_{\ell_1}(\beta_1) +2\pi,
\end{align*}
we deduce $A_{\xi_1}\cap A_{\xi_2}=\emptyset$.
\end{proof}

We can now combine Lemmas~\ref{l:arcalongsegment} and \ref{l:disjoint_arcs} 
to deduce that $m$ is locally Lipschitz.

Let $x_1,x_2\in\Omega$ be two Lebesgue points of $m$, and $z_j=m(x_j)$ for $j=1,2$. 
If $m(x_1)\neq m(x_2)$, then
applying Lemma~\ref{l:disjoint_arcs}, we obtain $\xi_j\in \mathbb{RP}^1$, 
open arcs $A_{\xi_j}\subset\mathbb S^1$ 
with extremities in $\Psi^{-1}(\xi_j)$ 
such that
$z_j\in A_{\xi_j}$
 and
\begin{align}\label{eq:dist_xij_zj}
\dist_{\mathbb{RP}^1}( \xi_1, \xi_2)\geq c |z_1-z_2|.
\end{align}
As $\xi_1\neq \xi_2$ in $\mathbb{RP}^1$, the two lines $x_1+\R \xi_1$, $x_2+\R\xi_2$ intersect in a point $x_0\in\R^2$. 
If the segments $[x_0,x_1]$ and $[x_0,x_2]$ were contained in $\Omega$,
then one would deduce from Lemma~\ref{l:arcalongsegment} that both sets
$\lbrace m\in A_{\xi_j}\rbrace$ have density one at $x_0$,
but this is impossible since $A_{\xi_1}\cap A_{\xi_2}=\emptyset$.
So at least one of
 the segments $[x_0,x_1]$, $[x_0,x_2]$ cannot be contained in $\Omega$, which implies that 
\begin{align*}
\dist(\lbrace x_1,x_2\rbrace,\partial\Omega) \leq \max\{|x_0-x_1|,|x_0-x_2|\}.
\end{align*}
Let $\theta\in(0,\pi/2]$ be the angle between $\xi_1$ and $\xi_2$. 
From elementary trigonometry in the triangle with vertices $\{x_0, x_1, x_2\}$, we deduce that
\begin{align*}
\dist_{\mathbb{RP}^1}(\xi_1,\xi_2)=\theta
\lesssim \sin\left(\theta\right)
&
\leq \frac{|x_1-x_2|}{\max\{|x_0-x_1|,|x_0-x_2|\}}
\\
&
\leq 
\frac{|x_1-x_2|}{\dist(\lbrace x_1,x_2\rbrace,\partial\Omega)}.
\end{align*}
Recalling \eqref{eq:dist_xij_zj}  we deduce that
\begin{align*}
|m(x_1)-m(x_2)|=|z_1-z_2|\lesssim 
\frac{|x_1-x_2|}{c\dist(\lbrace x_1,x_2\rbrace,\partial\Omega)}.
\end{align*}
The above estimate holds automatically if $m(x_1)=m(x_2)$,
 hence $m$ is locally Lipschitz.

In particular, $\Psi(m)$ is locally Lipschitz, and by \cite{JOP02,GMPS23}, it is constant along characteristics directed by $\xi=\Psi(m)$. 
Along these characteristics, the map $m$ is continuous with values into the finite set $\Psi^{-1}(\lbrace\pm\xi\rbrace)$ and must therefore be constant.
This concludes the proof of Proposition~\ref{p:rigid_m} part $(a)$.
\qed

\subsection{The case with boundary: proof of Theorem~\ref{t:NE_bdry}}\label{ss:NE_bdry}

Let $J=\lbrace e^{i\theta}\rbrace_{\theta\in [a,b]}\varsubsetneq \mathbb S^1$, 
so that we have
$Du=\mathrm{cof}\:\Gamma(m)$ \eqref{eq:cofDuGammam} for some $m\colon\Omega\to J$. 
As $\Gamma(e^{i\theta})=\mathrm{cof}\,\gamma(\theta)$, the assumptions of Theorem~\ref{t:NE_bdry} amount to $\Gamma\in C^2(J;\R^{2\times 2})$ satisfying $|\partial_{\theta}\Gamma|=1$, $\det(\partial_{\theta}\Gamma)=0$ and $|\det(\partial_{\theta}^2\Gamma)|>0$ on $J$. 
Moreover, $\Gamma$ satisfies the quartic estimate \eqref{eq:det4}, see Remark~\ref{r:nondegen_NE} and \textsection~\ref{s:det4NE}. 

Thanks to  Lemma~\ref{l:besov} we have
$m\in B^{1/3}_{4,\infty,\mathrm{loc}}(\Omega;J)$.
Applying Proposition~\ref{p:conslaws}, we deduce that 
\begin{align*}
\nabla\cdot \Phi(m)=0\qquad\forall \Phi\in \mathrm{ENT}_{\Gamma}.
\end{align*}
This can be used as in \S~\ref{ss:high_deg} to deduce that $m$ is locally Lipschitz.
In fact the proof is simpler in this case, and we sketch next how to adapt the main steps.

Since we have $|\partial_\theta\Gamma|=1$, $\det(\partial_\theta\Gamma)=0$ 
and $|\det(\partial^2_\theta\Gamma)|>0$ on $J$, as in Lemma~\ref{l:lambdaPsi} 
(see Remark~\ref{r:Psi_bdry}) we can find
$C^1$ maps $\hat\lambda,\hat\Psi \colon [a,b]\to\mathbb S^1$
with uniformly monotone phases and such that 
\begin{align*}
\partial_\theta\Gamma(e^{i\theta})=\hat\lambda(\theta)\otimes\hat\Psi(\theta)\qquad\forall\theta\in [a,b].
\end{align*}
This also defines $\Psi\in C^1(J;\mathbb S^1)$ by $\Psi(e^{i\theta})=\hat\Psi(\theta)$.

In that setting with boundary, 
Lemma~\ref{l:entgen} and the kinetic formulation \eqref{eq:kingen} become valid for 
any $\xi\in\Psi(J)$ and any arc 
$\exp(i\mathcal I)=A_\xi\subset J$, where $\mathcal I$ has one of the forms $(\theta_1, \theta_2)$, $[a, \theta_1)$ or $(\theta_2, b]$ for $e^{i\theta_j}\in\Psi^{-1}(\lbrace\pm\xi\rbrace)$.
(The proof is actually easier since we don't need to make the approximating entropies periodic.)
Writing $m(x)=e^{i\theta(x)}$ for $\theta\colon\Omega\to [a,b]$, this implies in particular
\begin{align*}
\hat\Psi(\alpha)\cdot \nabla_x \mathbf 1_{\theta(x)<\alpha} =0\qquad\forall \alpha\in [a,b],
\end{align*}
and the corresponding version of Lemma~\ref{l:arcalongsegment}. 
Namely, if $x_1$ is a Lebesgue point of $\theta$ such that $\theta(x_1)<\alpha$,
then the set $\lbrace\theta <\alpha\rbrace$ has density one at
all points $x_2\in x_1 +\R\hat\Psi(\alpha)$ such that $[x_1,x_2]\subset\Omega$.
The same holds for the set $\lbrace \theta >\alpha\rbrace$ if $\theta(x_1)>\alpha$.
Thanks to the uniform monotonicity of $\hat\Psi$'s phase, 
we obtain the following simpler version of Lemma~\ref{l:disjoint_arcs}.
For any $\theta_1 < \theta_2\in [a,b]$ we can find $\theta_1 <\alpha_1 <\alpha_2 <\theta_2$ such that 
\begin{align*}
\dist_{\mathbb{RP}^1}(
\lbrace\pm\hat\Psi(\alpha_1)\rbrace,
\lbrace\pm\hat\Psi(\alpha_2)\rbrace
) \geq c (\theta_2-\theta_1).
\end{align*}
As in \S~\ref{ss:high_deg}, we apply this to the values $\theta_j=\theta(x_j)$ at
Lebesgue points $x_1,x_2$ of $\theta$, 
 deduce that the lines $x_j +\R \hat\Psi(\alpha_j)$ must intersect $\partial\Omega$ before crossing,
and conclude that $\theta$ is locally Lipschitz, 
and so are $m$ and $\Psi(m)$.

Finally, Lemmas~\ref{l:conslawsC1} and \ref{l:entropiesAG} ensure that $\ti m=i\Psi(m)$ is an $\mathbb S^1$-valued zero-energy state of the Aviles-Giga energy in the sense of \cite[\S~2]{JOP02}. Arguing exactly as the end of \S~\ref{ss:high_deg}, we deduce that $m$ is constant along characteristics directed by $\Psi(m)$, and this concludes the proof of Theorem~\ref{t:NE_bdry} for $Du=\mathrm{cof}\:\Gamma(m)$.
\qed

\section{Examples of nowhere elliptic curves without rank-one connections}
\label{s:ex_NE}

In this section we give several examples of curves satisfying the assumptions of Theorem~\ref{t:NE}, and prove Proposition~\ref{p:NE} which states that,
among nondegenerate nowhere elliptic curves, 
the subset of curves without rank-one connections is open.

\subsection{The curves $\gamma_{k}$}
\label{ss:gammak}

\begin{lem}\label{l:gamma_k}
The curves parametrized by
\begin{equation*}
\gamma_{k}(t)
=\frac{1}{2} [e^{it}]_c +\frac{1}{2(k+1)}[e^{(k+1)it}]_a ,
\end{equation*}
for any integer $k\geq 1$,
have no rank-one connections, 
are nowhere elliptic, and satisfy $\det(\gamma''_k)<0$ on $\R/2\pi\Z$. 
That is, they satisfy the assumptions of Theorem~\ref{t:NE}.
\end{lem}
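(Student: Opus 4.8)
The plan is to compute everything explicitly from the conformal/anticonformal decomposition, since $\gamma_k = [\gamma_c]_c + [\gamma_a]_a$ with $\gamma_c(t) = \frac12 e^{it}$ and $\gamma_a(t) = \frac{1}{2(k+1)} e^{(k+1)it}$, hence $\gamma_c'(t) = \frac{i}{2} e^{it}$ and $\gamma_a'(t) = \frac{i}{2} e^{(k+1)it}$. First I would verify $\gamma_k$ is an arc-length parametrization and nowhere elliptic: by the identities $\det(\gamma') = |\gamma_c'|^2 - |\gamma_a'|^2$ and $|\gamma'|^2 = 2|\gamma_c'|^2 + 2|\gamma_a'|^2$ recalled in the proof of Lemma~\ref{l:lambdaPsi}, both $|\gamma_c'| = |\gamma_a'| = \frac12$ give $\det(\gamma_k') \equiv 0$ and $|\gamma_k'| \equiv 1$ immediately. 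For the nondegeneracy $\det(\gamma_k'') \neq 0$, I would use that $\det(\gamma'') = |\hat\lambda'|\,|\hat\Psi'| = |\varphi_\lambda'|\,|\varphi_\Psi'|$ with (in the notation of Lemma~\ref{l:lambdaPsi}) $\varphi_c(t) = t + \frac\pi2$, $\varphi_a(t) = (k+1)t + \frac\pi2$, so $\varphi_\lambda = \frac{\varphi_c+\varphi_a}{2}$ has derivative $\frac{k+2}{2}$ and $\varphi_\Psi = \frac{\varphi_a - \varphi_c}{2}$ has derivative $\frac{k}{2}$; hence $\det(\gamma_k'') = \pm \frac{k(k+2)}{4}$, and checking the sign (the cofactor-versus-matrix sign bookkeeping) gives $\det(\gamma_k'') = -\frac{k(k+2)}{4} < 0$ for $k \geq 1$. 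This also confirms, via Remark~\ref{r:Psi}, that $\deg(\Psi) = k/2$ as claimed in the text after \eqref{eq:gamma_k}.

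The substantive part is showing $\Pi_k = \gamma_k(\R/2\pi\Z)$ has no rank-one connections, i.e. $\det(\gamma_k(s) - \gamma_k(t)) \neq 0$ whenever $s \neq t$ in $\R/2\pi\Z$. Writing $A - B = \gamma_k(s) - \gamma_k(t) = [z_c]_c + [z_a]_a$ with $z_c = \frac12(e^{is} - e^{it})$ and $z_a = \frac{1}{2(k+1)}(e^{(k+1)is} - e^{(k+1)it})$, one has $\det(A-B) = |z_c|^2 - |z_a|^2$. So the claim reduces to the elementary trigonometric inequality
\begin{align*}
\left|\,e^{is} - e^{it}\,\right| \;>\; \frac{1}{k+1}\left|\,e^{(k+1)is} - e^{(k+1)it}\,\right| \qquad \text{for all } s \neq t \bmod 2\pi.
\end{align*}
Setting $\delta = \frac{s-t}{2}$ and using $|e^{i\alpha} - e^{i\beta}| = 2|\sin\frac{\alpha-\beta}{2}|$, this is exactly $2|\sin\delta| > \frac{2}{k+1}|\sin((k+1)\delta)|$, i.e. $|\sin((k+1)\delta)| < (k+1)|\sin\delta|$ for $\delta \notin \pi\Z$. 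I would prove this by noting that $\frac{|\sin((k+1)\delta)|}{|\sin\delta|}$ equals $|U_k(\cos\delta)|$ where $U_k$ is the Chebyshev polynomial of the second kind, which satisfies $|U_k(x)| \leq k+1$ for $|x| \leq 1$ with equality only at $x = \pm 1$; alternatively, an elementary induction on $k$ using $\sin((k+1)\delta) = \sin(k\delta)\cos\delta + \cos(k\delta)\sin\delta$ together with the strict triangle inequality handles it. The strictness for $\delta \not\equiv 0$ is what is needed, and at $\delta \equiv 0 \bmod \pi$ we have $s = t$ or $s = t + \pi$; the case $s = t + \pi$ must be treated separately, but there $z_c = \frac12(e^{is} - e^{i(s+\pi)}) = e^{is}$ has modulus $1$ while $z_a = \frac{1}{2(k+1)}(e^{(k+1)is} - e^{(k+1)i(s+\pi)})$ has modulus $0$ if $k$ is odd and $\frac{1}{k+1}$ if $k$ is even, so $\det(A-B) = 1 - |z_a|^2 > 0$ in all cases.

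The main obstacle I expect is purely the bookkeeping of signs and conventions — making sure that "$\det(\gamma_k'') < 0$" rather than "$> 0$" comes out correctly given the cofactor in the definition \eqref{eq:Gamma} and the orientation of the arc-length parametrization — together with cleanly isolating the antipodal case $s = t + \pi$ in the rank-one-connection argument, since that is precisely where the generic inequality $|\sin((k+1)\delta)| < (k+1)|\sin\delta|$ degenerates ($\delta = \pi/2$ gives $\sin((k+1)\delta)$ possibly equal to $\pm 1$ while $\sin\delta = 1$, so the inequality can fail to be strict there and one genuinely needs the separate direct computation of $|z_c|, |z_a|$). Everything else is a routine computation. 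Once the no-rank-one-connection property and $\det(\gamma_k'')<0$ are established, the conclusion that $\gamma_k$ satisfies the hypotheses of Theorem~\ref{t:NE} is immediate, since $\gamma_k \in C^2$ (indeed real-analytic), closed of length $2\pi$, with $\det(\gamma_k') \equiv 0$.
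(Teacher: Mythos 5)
Your proposal is correct, and the substantive reduction is the same as the paper's: both approaches reduce the no-rank-one-connections claim to the trigonometric inequality $(k+1)\lvert\sin\delta\rvert > \lvert\sin((k+1)\delta)\rvert$ for $\delta\notin\pi\Z$ (the paper writes this as $f(h):=\sin^2(h/2)-(k+1)^{-2}\sin^2((k+1)h/2)>0$ for $h\in(0,2\pi)$). Where you diverge is in \emph{how} you prove that inequality. The paper argues by calculus: it uses the symmetry $f(2\pi-h)=f(h)$ to reduce to $(0,\pi]$, estimates $f$ directly on $(\pi/2,\pi]$ via $\sin x\geq 2x/\pi$, and shows $f'>0$ on $(0,\pi/2]$ by a two-case monotonicity argument. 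You instead invoke the sharp bound $\lvert U_k(\cos\delta)\rvert\leq k+1$ for Chebyshev polynomials of the second kind, or equivalently the classical induction $\lvert\sin((n+1)\delta)\rvert\leq\lvert\sin(n\delta)\rvert\,\lvert\cos\delta\rvert+\lvert\cos(n\delta)\rvert\,\lvert\sin\delta\rvert$ with strictness driven by $\lvert\cos\delta\rvert<1$; this is shorter and arguably cleaner. One note: your separate treatment of the antipodal case $s=t+\pi$ is harmless but unnecessary --- at $\delta=\pi/2$ one has $\lvert\sin((k+1)\delta)\rvert\leq 1<k+1=(k+1)\lvert\sin\delta\rvert$, so the generic strict inequality already covers it for $k\geq 1$.

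Two small points of bookkeeping. First, for $\det(\gamma_k'')$ your route via $\lvert\hat\lambda'\rvert\,\lvert\hat\Psi'\rvert$ only pins down $\lvert\det(\gamma_k'')\rvert=\tfrac{k(k+2)}{4}$, and you then assert the sign; the direct computation the paper uses is cleaner and settles the sign immediately: $\gamma_k''$ has conformal part $-\tfrac12 e^{it}$ and anticonformal part $-\tfrac{k+1}{2}e^{(k+1)it}$, so $\det(\gamma_k'')=\tfrac14-\tfrac{(k+1)^2}{4}=\tfrac{1-(k+1)^2}{4}=-\tfrac{k(k+2)}{4}<0$. Second, the numerical value $-\tfrac{k(k+2)}{4}$ you report does agree with the paper's $\tfrac14\bigl(1-(k+1)^2\bigr)$, so there is no discrepancy, just an avoidable detour.
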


\begin{proof}[Proof of Lemma~\ref{l:gamma_k}]
Since 
\begin{align*}
\gamma_{k}'(t)&
=\frac{1}{2}\lt(\lt[i e^{it}\rt]_c+\lt[i e^{(k+1)it}\rt]_a\rt),\\
\gamma_{k}''(t)
&=\frac{1}{2}\lt(\lt[-e^{it}\rt]_c+\lt[-(k+1)e^{(k+1)it}\rt]_a\rt),
\end{align*}
it is straightforward to check that 
$\det\lt(\gamma_{k}'\rt)=0$ and 
$\det\lt(\gamma_{k}''\rt)=\frac 14\lt(1-(k+1)^2\rt)<0$ 
for all $k\geq 1$. 

It remains to check that the curve $\gamma_{k}(\R/2\pi\mathbb{Z})$ does not have rank-one connections. Note that, for all $t, h\in\R$, we have
\begin{align*}
\det\lt(\gamma_{k}(t+h)-\gamma_{k}(t)\rt)
&=
\frac{1}{4}\lt|e^{i\lt(t+h\rt)}-e^{it}\rt|^2 -\frac{1}{4(k+1)^2}\lt|e^{(k+1)i\lt(t+h\rt)}-e^{(k+1)it}\rt|^2
\nn\\
&=\frac{1}{4}\lt(\lt|e^{ih}-1\rt|^2-(k+1)^{-2}\lt|e^{(k+1)ih}-1\rt|^2\rt)\nn\\
&=\sin^2\lt(\frac{h}{2}\rt) - (k+1)^{-2}\sin^2\lt(\frac{(k+1) h}{2}\rt)
=:f(h).
\end{align*}
We show next that $f(h)>0$ for all $h\in (0,2\pi)$, which implies that the curve $\gamma_{k}(\R/2\pi\mathbb{Z})$ has no rank-one connections.

First note that $f(2\pi-h)=f(h)$, so it suffices to show that $f>0$ on $(0,\pi]$.
Then we argue separately on $(\pi/2,\pi]$ and $(0,\pi/2]$.

For $h\in (\pi/2,\pi]$ we have, using that $\sin(x)\geq \frac{2 x}{\pi}$ for all $x\in (0,\pi/2]$,
\begin{align*}
f(h)
&
\geq 
\frac {h^2}{\pi^2} -(k+1)^{-2}\sin^2\lt(\frac{(k+1) h}{2}\rt)
> \frac 14 -\frac{1}{(k+1)^2}\geq 0.
\end{align*}
On $(0,\pi/2]$ we show that $f'>0$, 
which completes the proof since $f(0)=0$.
The derivative is given by
 \begin{align*}
f'(h)&=\frac{1}{2}\lt( \sin\lt(h\rt)-  \frac{\sin\lt((k+1) h\rt)}{k+1} \rt). 
\end{align*}
Using  that $\sin(x)>\frac{2 x}{\pi}$ for all $x\in \lt(0,\frac{\pi}{2}\rt)$, we know 
$\sin\lt(\frac{\pi}{2(k+1)}\rt)>\frac{1}{k+1}$ and it follows that 
\begin{align*}
\sin\lt(h\rt)
>
\sin\lt(\frac{\pi}{2(k+1)}\rt)
>\frac{\sin\lt((k+1) h \rt)}{k+1}\qquad \forall h\in \lt(\frac{\pi}{2(k+1)}, \frac \pi 2\rt].
\end{align*}
Note that $\cos\lt(x\rt)> \cos\lt( (k+1) x \rt)$ for $x\in \lt(0,\frac{\pi}{2(k+1)}\rt]$ since the cosine function is decreasing on $\lt(0,\frac{\pi}{2}\rt]$. By integrating this inequality we have that 
\begin{align*}
\sin\lt(h\rt)>\frac{\sin\lt((k+1)h \rt)}{k+1}\qquad \forall h\in \lt(0,\frac{\pi}{2(k+1)}\rt].
\end{align*}
Putting the above two estimates together gives $f'(h)>0$ for all $h\in(0,\pi/2]$.
\end{proof}

\subsection{Further examples}\label{ss:ex}

Here we show how to construct many other examples of curves satisfying the assumptions of Theorem~\ref{t:NE}.

\begin{lem}\label{l:furtherexamples} 
	Let $\gamma\in C^2\lt(\mathbb{R}/2\pi \mathbb{Z}; \mathbb{C}\rt)$ be an arc-length parametrization of a simple closed curve, and let 
 $\ti{\gamma}\in C^2\lt(\mathbb{R}/2\pi \mathbb{Z}; \mathbb{C}\rt)$ be such that $\lt|\ti{\gamma}'\rt|\equiv 1$ and $|\tilde\gamma''|>0$ on $\mathbb{R}/2\pi \mathbb{Z}$. 
 Then there exists $k_0=k_0\lt(\gamma, \ti{\gamma}\rt)\in \mathbb{N}$ such that for every integer $k\geq k_0$, the curve in $\R^{2\times 2}$
parametrized by 
\begin{align*}
\alpha_k(t):=\lt[\gamma(t)\rt]_c+k^{-1}\lt[\ti{\gamma}(k t)\rt]_a
\end{align*}
is nowhere elliptic, satisfies $\det(\alpha_k'')<0$ and has no rank-one connections.
\end{lem}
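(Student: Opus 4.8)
The plan is as follows. Writing $\alpha_k(t) = [\gamma(t)]_c + k^{-1}[\tilde\gamma(kt)]_a$, linearity of $[\cdot]_c,[\cdot]_a$ together with $2\pi k\in 2\pi\Z$ show that $\alpha_k$ is a $2\pi$-periodic $C^2$ curve, with $\alpha_k'(t) = [\gamma'(t)]_c + [\tilde\gamma'(kt)]_a$ and $\alpha_k''(t) = [\gamma''(t)]_c + k[\tilde\gamma''(kt)]_a$. Using the identity $\det([z]_c + [w]_a) = |z|^2 - |w|^2$, the first two assertions are immediate: since $|\gamma'|\equiv|\tilde\gamma'|\equiv1$ we get $\det(\alpha_k'(t)) = |\gamma'(t)|^2 - |\tilde\gamma'(kt)|^2 \equiv 0$, so $\alpha_k$ is nowhere elliptic (no largeness of $k$ is needed); and $\det(\alpha_k''(t)) = |\gamma''(t)|^2 - k^2|\tilde\gamma''(kt)|^2 \le \|\gamma''\|_\infty^2 - k^2\min_{\R/2\pi\Z}|\tilde\gamma''|^2 < 0$ as soon as $k > \|\gamma''\|_\infty/\min|\tilde\gamma''|$. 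The same determinant identity reduces the absence of rank-one connections to showing
\begin{align*}
\det\!\bigl(\alpha_k(t+h) - \alpha_k(t)\bigr) = |\gamma(t+h)-\gamma(t)|^2 - \frac{1}{k^2}\bigl|\tilde\gamma(k(t+h)) - \tilde\gamma(kt)\bigr|^2 > 0
\end{align*}
for every $t$ and every $h\not\equiv 0\pmod{2\pi}$ (which also gives that $\alpha_k$ is injective).

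To prove this I would set $A(t,h) = |\gamma(t+h)-\gamma(t)|^2$, $B(t,h) = k^{-2}|\tilde\gamma(k(t+h))-\tilde\gamma(kt)|^2$ and $d(\sigma) = \dist(\sigma,2\pi\Z)$, and assemble four ingredients. \emph{(i)} Writing $\gamma' = e^{i\varphi}$ and using $\cos\theta\ge 1-\theta^2/2$ together with $|\varphi'| = |\gamma''|$, one gets $A(t,h)\ge h^2 - C_\gamma h^4$ for all $t,h$, with $C_\gamma$ depending only on $\|\gamma''\|_\infty$. \emph{(ii)} Since $|\tilde\gamma''|\ge c_1>0$, the signed curvature of $\tilde\gamma$ keeps a constant sign and $\tilde\gamma'$ is nonconstant on every interval; combining $\cos\theta\le 1-\tfrac{2}{\pi^2}\theta^2$ on $[-\pi,\pi]$ (short arcs) with a compactness argument (arcs of length up to $\pi$) should give a uniform chord-arc deficit $|\tilde\gamma(s+\sigma)-\tilde\gamma(s)|^2 \le d(\sigma)^2 - c_{\tilde\gamma}\,d(\sigma)^4$ for all $s,\sigma$, with $c_{\tilde\gamma}>0$ depending only on $\tilde\gamma$; in particular $B(t,h)\le d(kh)^2/k^2\le \pi^2/k^2$ always. \emph{(iii)} The elementary inequality $d(kh)\le k\,d(h)$. \emph{(iv)} $\mathfrak m(\delta) := \min\{A(t,h):\delta\le d(h)\le\pi\}>0$ for every $\delta\in(0,\pi)$, because $\gamma$ is injective on $\R/2\pi\Z$.

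With these in hand I would fix $\delta_0\in(0,\pi)$ small enough that $C_\gamma\delta_0^2\le\tfrac12$, take $k$ large, reduce to $h\in(-\pi,\pi]\setminus\{0\}$ (so that $d(h)=|h|$), and split into cases. If $|h|\ge\delta_0$, then $A\ge\mathfrak m(\delta_0)$ and $B\le\pi^2/k^2$, so $A-B>0$ once $k>\pi/\sqrt{\mathfrak m(\delta_0)}$. If $|h|<\delta_0$ and $k|h|\le\pi$, then $d(kh)=k|h|$, so (i) and (ii) give $A-B\ge (h^2-C_\gamma h^4)-(h^2-c_{\tilde\gamma}k^2h^4)=(c_{\tilde\gamma}k^2-C_\gamma)h^4>0$. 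If $|h|<\delta_0$ and $k|h|>\pi$, then either $d(kh)<k|h|/\sqrt2$, in which case $B\le d(kh)^2/k^2<h^2/2\le A$; or $d(kh)\ge k|h|/\sqrt2$, in which case, using $d(kh)\le k|h|$,
\begin{align*}
A-B &\ge \Bigl(h^2-\frac{d(kh)^2}{k^2}\Bigr)+\frac{c_{\tilde\gamma}\,d(kh)^4}{k^2}-C_\gamma h^4\\
&\ge \frac{c_{\tilde\gamma}\,d(kh)^4}{k^2}-C_\gamma h^4 \ge \frac{c_{\tilde\gamma}k^2h^4}{4}-C_\gamma h^4>0.
\end{align*}
Choosing $k_0=k_0(\gamma,\tilde\gamma)$ larger than every threshold appearing above then finishes the argument.

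I expect the main obstacle to be ingredient \emph{(ii)}: the near-diagonal expansion is routine, but upgrading it to a bound of the precise form $d(\sigma)^2-c_{\tilde\gamma}\,d(\sigma)^4$ valid for \emph{all} $\sigma$ forces one to glue the curvature estimate on short arcs to a compactness argument on arcs of length near $\pi$ (where only a strict chord-arc inequality is needed, supplied by $|\tilde\gamma''|>0$ since then $\tilde\gamma'$ is nonconstant on every interval). The scheme is delicate precisely in the intermediate regime $|h|\sim1/k$, where $A$ and $B$ have the same order and their difference stays positive only because of the curvature deficit of $\tilde\gamma$; this is why the final case analysis splits both at $k|h|=\pi$ and at $d(kh)=k|h|/\sqrt2$.
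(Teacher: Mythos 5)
Your proof is correct, and it differs from the paper's in a way that is worth noting. Both arguments use the determinant identity $\det\bigl(\alpha_k(t+h)-\alpha_k(t)\bigr)=|\gamma(t+h)-\gamma(t)|^2-k^{-2}|\tilde\gamma(kt+kh)-\tilde\gamma(kt)|^2$, and both split into a far regime, handled via injectivity of $\gamma$ and the trivial bound $k^{-2}|\tilde\gamma(k\cdot)-\tilde\gamma(k\cdot)|\le 2k^{-1}\sup|\tilde\gamma|\to 0$, and a near regime. The difference is entirely in the near regime. The paper invokes the quartic Taylor expansion \eqref{eq:limdetover4}--\eqref{eq:incr_det_gamma} of $\det(\alpha_k(t+h)-\alpha_k(t))$ and a compactness argument to claim positivity for $|e^{is}-e^{it}|\le\delta_1$ with $\delta_1=\delta_1(\gamma,\tilde\gamma)$ independent of $k$. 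This step is actually delicate: the error $\tilde\e$ in \eqref{eq:incr_det_gamma} applied to $\alpha_k$ involves the modulus of continuity of $\alpha_k''$, whose anticonformal part $k\,\tilde\gamma''(k\cdot)$ oscillates at frequency $k$; the expansion therefore controls $\tilde\e_k$ relative to the leading term $k^2\inf|\tilde\gamma''|^2$ only when $k|h|$ is small, not when $|h|$ is small. Your argument fills in precisely this gap. By replacing the abstract Taylor compactness argument with the explicit pair of deficits --- the Lipschitz-grade bound $A\ge h^2-C_\gamma h^4$ for the simple curve $\gamma$, and the uniform chord-arc deficit $|\tilde\gamma(s+\sigma)-\tilde\gamma(s)|^2\le d(\sigma)^2-c_{\tilde\gamma}d(\sigma)^4$ for $\tilde\gamma$ --- and then splitting the near regime both at $k|h|=\pi$ and at $d(kh)=k|h|/\sqrt2$, you handle the problematic intermediate scale $|h|\sim1/k$ directly, showing that the $\tilde\gamma$-side deficit always dominates. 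Two small remarks: first, in your ingredient (i) the sharp constant is $C_\gamma=\|\gamma''\|_\infty^2/12$; second, your ingredient (ii) needs no injectivity of $\tilde\gamma$ (the statement assumes none), since it is an upper bound --- the strict chord-arc inequality comes only from $\tilde\gamma'$ being nonconstant on every interval, which $|\tilde\gamma''|>0$ guarantees, and gluing the $O(\sigma^4)$ Taylor deficit near $\sigma\equiv0$ to a compactness argument for $d(\sigma)$ bounded away from $0$ does produce the claimed uniform constant $c_{\tilde\gamma}$ with the necessary constraint $c_{\tilde\gamma}\pi^2\le 1$. In short: same determinant computation and same far-field argument, but you supply a more explicit and more robust treatment of the near-field case, which is the genuinely delicate part.
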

\begin{proof}[Proof of Lemma~\ref{l:furtherexamples}]
To simplify notation let $\hat{\gamma}_{k}(t)=k^{-1}\ti{\gamma}(k t)$. First note that $\lt|\hat{\gamma}_{k}'\rt|=\lt|\gamma'\rt|=1$, so $\det(\alpha_k')=|\gamma'|^2-|\hat\gamma_{k}'|^2=0$, that is, the curve is nowhere elliptic.

Moreover, we have
\begin{align*}
\det(\alpha_k'')&
=|\gamma''|^2 -k^2 |\tilde\gamma''|^2 \leq \sup |\gamma''|^2 -k^2 \inf |\tilde\gamma''|^2
<0,
\end{align*}
provided $k\geq k_0$ for $k_0$ large enough.
Thanks to the identity \eqref{eq:limdetover4} in the proof of Lemma~\ref{l:detcontrols4}
and by compactness of $\R/2\pi\Z$, this implies the existence of
$\delta_1=\delta_1(\gamma, \ti{\gamma})>0$ such that 
\begin{align}
\label{eqnx103}
\det\lt(\alpha_k(s)- \alpha_k(t)\rt)>0\qquad\text{for any }s,t\in \mathbb{R}/2\pi \mathbb{Z}
\text{ s.t. }|e^{is}-e^{it}|\leq \delta_1.
\end{align}
Also note that since $\gamma$ parametrizes a simple closed curve, there exists another constant $\beta_0=\beta_0(\gamma)>0$ such that 
\begin{align}
\label{eqnx101}
\lt|\gamma(s)- \gamma(t) \rt|\geq \beta_0\qquad\text{for any }s,t\in \mathbb{R}/2\pi \mathbb{Z}
\text{ s.t. }|e^{is}-e^{it}|\geq \delta_1.
\end{align}
For $s,t$ as in \eqref{eqnx101} we infer
\begin{align*}
\det(\alpha_k(s)-\alpha_k(t))
&
=
\lt|\gamma(s)- \gamma(t) \rt|^2
- \frac{1}{k^2}|\tilde\gamma(ks)-\tilde\gamma(kt)|^2
\\
&
\geq \beta_0^2-\frac{4}{k^2}\sup |\tilde\gamma|^2,
\end{align*}
and deduce that
\begin{align}
\label{eqnx102}
\det(\alpha_k(s)-\alpha_k(t))
>0\qquad\text{for any }
s,t\in \mathbb{R}/2\pi \mathbb{Z}
\text{ s.t. }|e^{is}-e^{it}|\geq \delta_1,
\end{align}
provided $k\geq k_0$ for $k_0$ large enough.
Putting \eqref{eqnx102} and \eqref{eqnx103} together shows that
the curve parametrized by $\alpha_k$ 
 has no rank-one connections. 
\end{proof}

\subsection{Proof of Proposition~\ref{p:NE} }\label{ss:NE*}

Recall the definitions of the set
\begin{align*}
\mathrm{NE}_*=\left\lbrace \gamma\in C^2(\R/2\pi\Z;\R^{2\times 2}) \colon \det(\gamma')= 0
\text{ and }|\det(\gamma'')|>0
\right\rbrace,
\end{align*}
and its subset
\begin{align*}
\mathrm{NE}_{**}=
\Big\lbrace \gamma\in \mathrm{NE}_*\colon
&
\gamma(\R/2\pi\Z)\text{ has no rank-one connections}
\Big\rbrace.
\end{align*}
In this section we prove Proposition~\ref{p:NE}, which asserts that $\mathrm{NE}_{**}$ is open in $\mathrm{NE}_*$ for the $C^2$ topology.

Let $\gamma,\bar\gamma\in \mathrm{NE}_*$.
It is clear from the expansion \eqref{eq:incr_det_gamma}, the explicit form of $\tilde\e(t,h)$ and the fact that $\det(\gamma')=\det(\bar\gamma')=0$, that for all $t,h$ in $\R$ we have
\begin{align*}
&\left|\det\left(\gamma(t+h)-\gamma(t)\right)-\det\left(\bar\gamma(t+h)-\bar\gamma(t)\right)\right|
\\
&
\leq C (\|\gamma''\|_{\infty}+\|\bar\gamma''\|_{\infty}) \|\gamma''-\bar\gamma''\|_{\infty} h^4,
\end{align*}
for some absolute constant $C>0$.

Now fix $\bar\gamma\in\mathrm{NE}_{**}$  and assume without loss of generality that $\det(\bar\gamma(t)-\bar\gamma(s))>0$ for all $s\ne t\in\R/2\pi\mathbb{Z}$, 
then the last inequality implies that, if $\|\gamma''-\bar\gamma''\|_{\infty}\leq \delta$ for  $\delta>0$ small enough,
then for all $s,t\in\R$ we have
\begin{align*}
	\det(\gamma(t)-\gamma(s))
	\geq \det(\bar\gamma(t)-\bar\gamma(s)) -C(\bar\gamma)\delta |e^{it}-e^{is}|^4.
\end{align*}
By definition of $\mathrm{NE}_{**}$ and Lemma~\ref{l:detcontrols4}
there exists $\bar\kappa>0$ such that
\begin{align*}
	\det(\bar\gamma(t)-\bar\gamma(s))
	\geq \bar\kappa |e^{it}-e^{is}|^4,
\end{align*}
so if $\|\gamma''-\bar\gamma''\|_{\infty}\leq \delta$ for  $\delta>0$  small enough we deduce
\begin{align*}
	\det(\gamma(t)-\gamma(s))
	\geq \frac{\bar\kappa}{4} |e^{it}-e^{is}|^4.
\end{align*}
That is, $\gamma\in \mathrm{NE}_{**}$, and this shows that $\mathrm{NE}_{**}$ is open in $\mathrm{NE}_*$ with respect to the $C^2$ topology.
\qed

%
%

\begin{appendices}

%
%
%
%

\section{Rigidity estimate for elliptic arcs}
\label{a:rig}

We provide here the
extension of \cite[Theorem~1.1]{LLP23} 
needed in the proof of Proposition~\ref{p:reg_ellipt},
namely, a rigidity estimate 
 for $C^2$ elliptic arcs $\mathcal J\subset\R^{2\times 2}$.
In \cite{LLP23} this was established for smooth closed elliptic curves.

%
%

\begin{prop} \label{p:rig_est_J}
	Let $a<b$ and $\gamma\in C^2\lt(\lt[a,b\rt];\R^{2\times 2}\rt)$ be
	injective such that
	$\mathcal J=\gamma([a,b])$ is elliptic:
	\begin{align}\label{eq:elliptJ}
		\det(A-B)\geq c_0|A-B|^2\qquad\forall A,B\in \mathcal J,
	\end{align}
	for some $c_0>0$.
	Then $\mathcal J$ satisfies a rigidity estimate:
	there exists $C=C(\mathcal J)>0$ such that
	\begin{align*}
		\inf_{A\in \mathcal J}\int_{B_{1/2}}| Du -A |^2\,dx  \leq C \int_{B_1} \dist^2(Du, \mathcal J)\, dx,
	\end{align*}
	for all $u\in W^{1,2}(B_1;\R^2)$.
\end{prop}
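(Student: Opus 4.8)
The plan is to reproduce the proof of \cite[Theorem~1.1]{LLP23}, whose only structural inputs are that $\mathcal J$ is a compact connected set satisfying the quadratic determinant estimate \eqref{eq:elliptJ} (equivalently, that the differential inclusion $Du\in\mathcal J$ is uniformly elliptic in the sense of \v{S}ver\'ak); closedness of the curve is never used, so that only minor adjustments are needed for a $C^2$ curve with boundary. Let me outline the scheme.

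First I would reduce to the small-energy regime: since $\mathcal J$ is bounded, $\inf_{A\in\mathcal J}\int_{B_{1/2}}|Du-A|^2\le 2\int_{B_1}\dist^2(Du,\mathcal J)+C(\mathcal J)$, so the estimate is trivial once $\int_{B_1}\dist^2(Du,\mathcal J)$ is bounded below, and it suffices to treat $u$ with $\int_{B_1}\dist^2(Du,\mathcal J)$ small. Then I would argue by contradiction, producing $u_n$ with $\delta_n^2:=\int_{B_1}\dist^2(Du_n,\mathcal J)\to 0$ and $\inf_{A\in\mathcal J}\int_{B_{1/2}}|Du_n-A|^2>n\,\delta_n^2$. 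Picking $A_n=\gamma(t_n)\in\mathcal J$ realizing the infimum $\e_n^2:=\int_{B_{1/2}}|Du_n-A_n|^2$ (so $\delta_n/\e_n\to0$) and normalizing $w_n=\e_n^{-1}(u_n-A_nx-c_n)$ with $\fint_{B_{1/2}}w_n=0$, the maps $w_n$ are bounded in $W^{1,2}(B_{1/2})$; along a subsequence $w_n\rightharpoonup w$ and $t_n\to t_0\in[a,b]$. From \eqref{eq:elliptJ} one gets $\det\gamma'(t_0)\ge c_0|\gamma'(t_0)|^2>0$, so the tangent line $\mathcal L:=\R\gamma'(t_0)$ has no rank-one connections.

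Next I would pass the rescaled constraint to the limit. Because $\e_n^{-1}\dist(A_n+\e_nM,\mathcal J)\to\dist(M,\mathcal L)$ (with $\mathcal L$ replaced by a closed half-line $\R_\pm\gamma'(t_0)$ if $t_0$ is an endpoint of $[a,b]$) and $\e_n^{-1}\|\dist(Du_n,\mathcal J)\|_{L^2(B_{1/2})}=\delta_n/\e_n\to0$, one obtains $\dist(Dw_n,\mathcal L)\to0$ in $L^2_{\mathrm{loc}}(B_{1/2})$. Here is the crucial step, which I would import from \cite{LLP23}: ellipticity \eqref{eq:elliptJ} makes the near-solutions $w_n$ quasiregular up to a controlled error, hence satisfy a reverse H\"older inequality; combined with the curl-free structure and a compensated-compactness argument, this upgrades $Dw_n\rightharpoonup Dw$ to strong $L^2_{\mathrm{loc}}$ convergence and rules out concentration of $|Dw_n|^2$ near $\partial B_{1/2}$. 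Consequently $Dw\in\mathcal L$ (or the half-line) a.e.; since a $W^{1,2}$ map whose gradient lies a.e. in a single rank-one-free line is affine, $Dw\equiv c\in\mathcal L$. Finally, first-order optimality of $A_n$ gives $\langle\fint_{B_{1/2}}Dw_n,\gamma'(t_n)\rangle=0$, or the corresponding one-sided inequality at an endpoint; in the limit $\langle c,\gamma'(t_0)\rangle=0$ (respectively $\le 0$ with $c\in\R_{\ge0}\gamma'(t_0)$), so $c=0$, whence $w$ is constant and, by $\fint_{B_{1/2}}w_n=0$, $w\equiv0$ — contradicting $\int_{B_{1/2}}|Dw_n|^2=1$ via the strong convergence.

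I expect the main obstacle to be precisely the reverse-H\"older/strong-convergence step, which is the technical heart of \cite{LLP23}; it uses only \eqref{eq:elliptJ} and therefore carries over unchanged. The one genuinely new feature, relative to the closed-curve case, is that the blow-up base point $\gamma(t_0)$ may be an endpoint of the arc, so that the limiting inclusion lives on a half-line $\R_+\gamma'(t_0)$ rather than the full tangent line; a closed half-line is still convex and free of rank-one connections, so the rigidity of the blow-up limit and the optimality argument go through verbatim, and the proof concludes as in \cite{LLP23}.
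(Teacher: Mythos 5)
You take a genuinely different route from the paper. The paper's proof is a soft reduction: by exploiting the conformal/anticonformal decomposition $\gamma=[\gamma_c]_c+[H\circ\gamma_c]_a$ with $H$ contractive (which follows from \eqref{eq:elliptJ}), one extends $H$ via Kirszbraun to a contractive map on all of $\C$, extends $\gamma_c$ to a closed $C^2$ curve, and thereby embeds $\mathcal J$ into a closed $C^2$ elliptic curve $\Gamma$ (this is Lemma~\ref{RLE1}); then one applies \cite[Theorem~1.1]{LLP23} to $\Gamma$ as a black box and transfers the estimate from $\Gamma$ back to $\mathcal J$ by an elementary triangle-inequality argument. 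Your approach instead reruns the compactness/contradiction machinery of \cite{LLP23} for the arc directly, tracking where an endpoint of the arc forces the blow-up cone to be a half-line rather than a line, and observing (correctly) that a closed rank-one-free half-line still yields a constant blow-up limit and that the one-sided first-order optimality at the endpoint still forces that constant to vanish.

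Both routes are in principle viable, but your argument hinges on the assertion that ``closedness of the curve is never used'' in \cite{LLP23} and black-boxes the reverse-H\"older / strong-convergence / no-boundary-concentration step as carrying over ``unchanged.'' That claim is not verified in your proposal, and the fact that the authors chose to prove the non-trivial extension Lemma~\ref{RLE1} rather than simply remark that the compactness argument extends suggests this is not as automatic as you present it. In particular, the blow-up approximation $\e_n^{-1}\dist(A_n+\e_n M,\mathcal J)\to\dist(M,\mathcal L)$ is not uniform in $M$, which is precisely why the strong $L^2$ convergence of $Dw_n$ (and the absence of concentration near $\partial B_{1/2}$) is the technical heart — you defer this entirely to \cite{LLP23} without checking it survives the passage to arcs. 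The paper's extension approach is more modular and more robust: it invokes only the statement of \cite{LLP23}, not its proof, and all new work is a self-contained geometric extension lemma. If you wish to make your route rigorous you would need to actually verify that the energy-concentration and reverse-H\"older estimates in \cite{LLP23} are local and insensitive to the topology of the constraint set; the paper sidesteps this entirely.
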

\begin{proof}[Proof of Proposition~\ref{p:rig_est_J}] We will show in Lemma \ref{RLE1} that we can find a closed $C^2$ curve $\Gamma \subset \R^{2\times 2}$
	such that $\mathcal J\subset\Gamma$ and $\Gamma$ is elliptic:
	\begin{align}\label{eq:elliptGamma}
		\det(A-B)\geq c_1 |A-B|^2\qquad\forall A,B\in \Gamma.
	\end{align}
	Granted this, we can directly apply \cite[Theorem~1.1]{LLP23} which provides $C=C(\Gamma)>0$ such that
	\begin{align}\label{eq:rig_est_Gamma}
		\inf_{A\in \Gamma}\int_{B_{1/2}}| Du -A |^2\,dx  \leq C \int_{B_1} \dist^2(Du, \Gamma)\, dx
		\leq C \int_{B_1}\dist^2(Du,\mathcal J)\,dx,
	\end{align}
	for all $u\in W^{1,2}(B_1;\R^2)$.
	In \cite[Theorem~1.1]{LLP23} this is stated for a smooth curve $\Gamma$, but the proof only requires $C^2$ regularity
	(and could probably be modified to require only $C^1$ regularity).
	Moreover, given $u\in W^{1,2}(B_1;\R^2)$ and $A\in\Gamma$ attaining the infimum in the left-hand side of \eqref{eq:rig_est_Gamma},
	we can integrate on $B_{1/2}$ the elementary inequality
	\begin{align*}
		\dist^2(A,\mathcal J)\leq 2 |Du-A|^2 + 2 \dist^2(Du,\mathcal J),
	\end{align*}
	and use \eqref{eq:rig_est_Gamma} to deduce
	\begin{align*}
		|B_{1/2}|\dist^2(A,\mathcal J) \leq (2C+2)\int_{B_1} \dist^2(Du,\mathcal J)\, dx.
	\end{align*} 
	Taking $\ti A\in \mathcal J$ such that $|A-\ti A|=\dist(A,\mathcal J)$ and combining the above estimate with \eqref{eq:rig_est_Gamma}, we obtain
	\begin{align*}
		\int_{B_{1/2}}| Du -\ti A |^2\,dx
		\leq (6C+4) \int_{B_1}\dist^2(Du,\mathcal J)\,dx,
	\end{align*}
	thus proving Proposition~\ref{p:rig_est_J}.
\end{proof}

\begin{lem}
\label{RLE1}
Let $a<b$ and $\gamma\in C^2\lt(\lt[a,b\rt];\R^{2\times 2}\rt)$ be
injective such that $\mathcal J=\gamma([a,b])$ satisfies \eqref{eq:elliptJ}. Then there exists a closed $C^2$ curve $\Gamma \subset \R^{2\times 2}$
such that $\mathcal J\subset\Gamma$ and $\Gamma$ satisfies \eqref{eq:elliptGamma} for some positive constant $c_1<c_0$.
\end{lem}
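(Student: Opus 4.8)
The plan is to ``close up'' the arc $\mathcal J$ by appending a smooth cap that runs far off in the conformal directions of $\R^{2\times 2}$, where ellipticity against all of $\mathcal J$ comes essentially for free. After a reparametrization we may assume $|\gamma'|\equiv1$ (in the situation where this lemma is applied, $\gamma$ is already an arclength parametrization of a larger curve, so $\gamma'\ne0$). Writing $\gamma=[\gamma_c]_c+[\gamma_a]_a$ for the conformal/anticonformal decomposition with $\gamma_c,\gamma_a\in C^2([a,b];\C)$ as in \eqref{eq:zac}, a direct computation gives $\det([z]_c+[w]_a)=|z|^2-|w|^2$ and $|[z]_c+[w]_a|^2=2(|z|^2+|w|^2)$, so \eqref{eq:elliptJ} is equivalent to: $\gamma_c$ is injective (hence, being also a $C^2$ immersion since $|\gamma_c'|^2\ge\det\gamma'\ge c_0|\gamma'|^2=c_0>0$, a $C^2$ embedding of $[a,b]$ into $\C$), together with
\[
|\gamma_a(t)-\gamma_a(s)|\le\kappa_0\,|\gamma_c(t)-\gamma_c(s)|\quad\forall s,t\in[a,b],\qquad \kappa_0:=\sqrt{\frac{1-2c_0}{1+2c_0}}\in[0,1)
\]
(which is well defined since necessarily $c_0\le1/2$). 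Fixing $\kappa_1\in(\kappa_0,1)$ once and for all, it then suffices to extend $\gamma_c,\gamma_a$ to maps $\tilde\gamma_c,\tilde\gamma_a\in C^2(\mathbb S^1;\C)$ with $\tilde\gamma_c$ a $C^2$ embedding and $|\tilde\gamma_a(t)-\tilde\gamma_a(s)|\le\kappa_1|\tilde\gamma_c(t)-\tilde\gamma_c(s)|$ for all $s,t$; indeed $\Gamma:=\tilde\gamma(\mathbb S^1)$ with $\tilde\gamma:=[\tilde\gamma_c]_c+[\tilde\gamma_a]_a$ will then be as required.

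First I would extend $\gamma_c$ to a $C^2$ embedding $\tilde\gamma_c:\mathbb S^1\to\C$ --- using the standard fact that a $C^2$ embedded arc in the plane is contained in a $C^2$ embedded circle, with matching $2$-jets at the two endpoints $p=\gamma_c(a)$, $q=\gamma_c(b)$ --- arranged so that the complementary ``cap'' $C=\tilde\gamma_c(\mathbb S^1)\setminus\gamma_c((a,b))$ meets $A:=\gamma_c([a,b])$ only at $p,q$, and moreover, since $\C\setminus A$ is open and connected, so that outside small parameter-neighborhoods of the two junctions the cap stays at distance $\ge\rho$ from $A$; here $\rho$ will be taken large only at the very end. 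Then I would extend $\gamma_a$ over the cap interval as a $C^2$ map matching the $2$-jets of $\gamma_a$ at the junctions and such that (a) on those small junction neighborhoods $|\tilde\gamma_a'|\le\kappa_1'|\tilde\gamma_c'|$ for some $\kappa_1'\in(\kappa_0,\kappa_1)$ --- possible because at each junction $|\gamma_a'|\le\kappa_0|\gamma_c'|$ strictly, so this persists by continuity on a small enough neighborhood --- and (b) the total variation of $\tilde\gamma_a$ along the cap is at most a constant $V_*\le\kappa_0\,\mathrm{diam}(A)+|\gamma_a(a)-\gamma_a(b)|$ plus lower-order terms, a constant that does not depend on $\rho$.

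The hard part will be verifying the global estimate $|\tilde\gamma_a(t)-\tilde\gamma_a(s)|\le\kappa_1|\tilde\gamma_c(t)-\tilde\gamma_c(s)|$ on $\mathbb S^1\times\mathbb S^1$ --- not conceptually, but because the constants have to be chosen in the right order. I would split into cases: (i) $s,t\in[a,b]$, which is the hypothesis with $\kappa_0<\kappa_1$; (ii) $s,t$ both in a small parameter-neighborhood of a single junction, where the pointwise bound (a) integrates to the estimate via the chord--arc inequality for short arcs of the embedded curve; (iii) at least one of $s,t$ in the ``far'' part of the cap, where $|\tilde\gamma_c(t)-\tilde\gamma_c(s)|$ is bounded below by a quantity comparable to $\rho$ (by the routing and the embedding property), while $|\tilde\gamma_a(t)-\tilde\gamma_a(s)|\le V_*$ is independent of $\rho$, so taking $\rho$ large at the end closes the estimate; (iv) the remaining pairs on the cap (mutually close, or in intermediate position), handled by combining (ii)--(iii) with the chord--arc inequality. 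One checks that the junction neighborhoods and $\kappa_1'$ can be fixed first, and then the cap routed so that $\rho$ exceeds whatever these bounds demand --- so there is no circularity --- and this is the one step requiring genuine care.

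Finally, $\tilde\gamma=[\tilde\gamma_c]_c+[\tilde\gamma_a]_a$ is $C^2$, an immersion since $|\tilde\gamma'|^2\ge2|\tilde\gamma_c'|^2>0$, and injective because $\tilde\gamma_c$ is (the conformal/anticonformal decomposition is unique); so $\Gamma=\tilde\gamma(\mathbb S^1)$ is a closed $C^2$ curve, and $\mathcal J=\gamma([a,b])\subset\Gamma$ since $\tilde\gamma=\gamma$ on $[a,b]$. For $\tilde\gamma(t),\tilde\gamma(s)\in\Gamma$, writing $z=\tilde\gamma_c(t)-\tilde\gamma_c(s)$ and $w=\tilde\gamma_a(t)-\tilde\gamma_a(s)$ with $|w|\le\kappa_1|z|$, one has $\det(\tilde\gamma(t)-\tilde\gamma(s))=|z|^2-|w|^2\ge(1-\kappa_1^2)|z|^2$ and $|\tilde\gamma(t)-\tilde\gamma(s)|^2=2(|z|^2+|w|^2)\le2(1+\kappa_1^2)|z|^2$, hence \eqref{eq:elliptGamma} holds with $c_1=\frac{1-\kappa_1^2}{2(1+\kappa_1^2)}$, and $c_1<c_0$ because $\kappa\mapsto\frac{1-\kappa^2}{2(1+\kappa^2)}$ is strictly decreasing on $[0,1)$ and $\kappa_1>\kappa_0$.
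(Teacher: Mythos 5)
Your approach starts from the same algebraic skeleton as the paper (the conformal/anticonformal decomposition, the identities $\det = |z|^2 - |w|^2$ and $|\cdot|^2 = 2(|z|^2+|w|^2)$, and the observation that ellipticity is a Lipschitz bound on the anticonformal part against the conformal part, with the constant $\kappa_0 = \sqrt{(1-2c_0)/(1+2c_0)}$ matching the paper's). But the key step diverges. The paper rewrites $\gamma_a = H\circ\gamma_c$ for a $k$-Lipschitz map $H$ defined on the planar arc $\gamma_c([a,b])$, extends $H$ to all of $\C$ by Kirszbraun, smooths it while keeping $\|DH\|_\infty < 1$, and only then composes with an arbitrary closed embedded extension $\tilde\gamma_c$; the two-point bound $|\tilde\gamma_a(t)-\tilde\gamma_a(s)|\le k_\alpha|\tilde\gamma_c(t)-\tilde\gamma_c(s)|$ is then automatic, because it is exactly the Lipschitz property of $H_\alpha$ applied to $\tilde\gamma_c(t),\tilde\gamma_c(s)$. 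You instead build $\tilde\gamma_a$ directly as a function of the cap parameter and try to verify the two-point bound by a case analysis on the locations of $s,t$. That is a genuinely more elementary route (it avoids Kirszbraun and the Fermi-coordinate construction), and the idea of pushing the cap far away so that the conformal separation dominates in the ``far'' cases is a nice simplification.

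However, as written the argument has a gap precisely in your case (iv), and it is a conceptual one, not just a matter of choosing constants in the right order. The difficulty is that a pointwise bound of the form $|\tilde\gamma_a'|\le\kappa|\tilde\gamma_c'|$ only controls $|\tilde\gamma_a(t)-\tilde\gamma_a(s)|$ by $\kappa$ times the \emph{arclength} of $\tilde\gamma_c$ between $s$ and $t$, whereas the estimate you need has the \emph{chord} $|\tilde\gamma_c(t)-\tilde\gamma_c(s)|$ on the right; so every intermediate-range pair picks up the chord-to-arc ratio of the cap, which for a closed curve is bounded away from $1$ and not small. You claim the pointwise bound (a) only in small neighborhoods of the two junctions, and elsewhere on the cap you claim only a total-variation bound (b); for pairs $s,t$ both in the cap, both away from the junctions, and close to one another, neither (a), nor (b), nor the ``$|\tilde\gamma_c(t)-\tilde\gamma_c(s)|\gtrsim\rho$'' dichotomy in (iii) applies, and nothing in the write-up rules out $|\tilde\gamma_a'|/|\tilde\gamma_c'|$ being large there. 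To close this you would need either to make $\tilde\gamma_a$ constant on the far part of the cap with the transition governed by a pointwise ratio bound (and route the cap nearly straight near the junctions so the chord--arc constant is close to $1$ throughout the transition), or to impose a global pointwise ratio $\le\kappa$ with $\kappa$ chosen small enough to absorb the global chord--arc constant of the cap, reconciled with the forced value $\approx\kappa_0$ at the junctions. Either way this is exactly the delicate geometry that the paper's $H$-Lipschitz/Kirszbraun reformulation avoids: there the Lipschitz bound lives on the plane, not on the parameter interval, so the chord--arc issue never appears.
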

\begin{proof} 
First we extend the $C^2$ curve $\gamma$ to $[a-\e,b+\e]$ for some $\e>0$, while conserving the ellipticity \eqref{eq:elliptJ} for $\mathcal J=\gamma([a-\e,b+\e])$.
To that end we set
$\gamma(a-t)=\gamma(a)-t\gamma'(a)+t^2\gamma''(a)/2$ for $0<t\leq\e$.
That way \eqref{eq:elliptJ} is satisfied with a possibly smaller constant $c_0$ (not renamed) for $A,B\in \gamma([a-\e,a+\e])$ 
provided $\e$ is small enough, 
because $\det(\gamma'(a+h)) = \det(\gamma'(a)) +o(1)\neq 0$ for $h\to 0$. 
And \eqref{eq:elliptJ} is satisfied also for $A\in\gamma([a-\e,a])$ and $B\in\gamma([a+\e,b])$ because $A\approx \gamma(a)$ and $\det(\gamma(a)-B)$ is bounded from below by a positive constant. 
The same arguments apply for the extension to $[b,b+\e]$. Note that $\gamma$ is smooth in $[a-\ep,a)\cup(b,b+\ep]$.

Then we use the classical fact  \cite{zhang97,FS08}
 that the ellipticity \eqref{eq:elliptJ} of $\mathcal J$ implies that the conformal-anticonformal decomposition \eqref{eq:gamma_ac} of $\gamma$ is given by
\begin{align*}
\gamma=[\gamma_c]_c + [H\circ\gamma_c]_a,
\end{align*}
where $\gamma_c\in C^2([a-\e,b+\e];\mathbb C)$ is injective with $|\gamma'_c|>0$ on $[a-\e,b+\e]$, and 
\begin{align*}
H\colon \gamma_c([a-\e,b+\e])\to \C,
\end{align*}
 is $k$-Lipschitz for $k=\sqrt{(1-2c_0)/(1+2c_0)}\in (0,1)$.  Next we extend $H$ to a $k$-Lipschitz map over $\mathbb{C}$ with a possibly larger $k<1$ such that the extension is smooth outside a sufficiently small neighborhood of $\gamma_c([a,b])$. The proof is very similar to the proof of \cite[Lemma 3.1]{LLP23}, and we sketch below the key steps. 
 
In the first step, possibly after a reparametrization, we may assume without loss of generality that $\gamma_c(t)$ is an arc-length parametrization of $\gamma_c([a-\ep,b+\ep])$. 
For fixed $\ep$ and small enough $\delta>0$, denote $R_{\delta}:=[a,b]\times(-\delta,\delta)$ and $R_{\delta}^\ep:=\lt([a-\ep,a)\cup(b,b+\ep]\rt)\times(-\delta,\delta)$. The map
\begin{align*}
	\varphi\colon (t,r) &\mapsto \gamma_c(t) +r i\gamma_c'(t),
\end{align*}
is a $C^1$ diffeomorphism between $R_{\delta}\cup R_{\delta}^\ep$ and $\varphi(R_{\delta}\cup R_{\delta}^\ep)$. 
Since $\gamma$ is smooth in $[a-\ep,a)\cup(b,b+\ep]$, this map $\varphi$ is further a smooth diffeomorphism between $R_{\delta}^\ep$ and $\varphi(R_{\delta}^\ep)$. 
For $z\in\varphi(R_{\delta}\cup R_{\delta}^\ep)$, define $\widetilde H(z) = H(\gamma_c(t))$ where $z=\varphi(t,r)$. 
This $\widetilde H$ agrees with $H$ on $\gamma_c([a-\ep, b+\ep])$, and is $C^1$ in $\varphi(R_{\delta}\cup R_{\delta}^\ep)$ and smooth in $\varphi(R_{\delta}^\ep)$ by the regularity of $\gamma_a=H\circ\gamma_c$ and $\varphi^{-1}$. 
Further, we have $\|D(\varphi^{-1})\|\leq 1+C\delta$ for some constant $C$ depending on $\gamma_c$, and it follows that $\widetilde H$ is $\ti k$-Lipschitz with $\ti k=(1+C\delta)k<1$ for small enough $\delta$.  

In the second step, we extend $\widetilde H$ to $\mathbb{C}$ such that the extension is smooth outside a sufficiently small neighborhood of $\gamma_c([a,b])$. By Kirszbraun's theorem, we can first extend  $\widetilde H$ to a $\ti k$-Lipschitz map (not renamed) over $\mathbb{C}$. For $\alpha>0$, we define  $H_{\alpha}(z)=\int_\C \widetilde H(z+\alpha  \chi(z) y)\rho(y)\, dy$ for a smooth kernel $\rho\geq 0$ with support in $B_1$ 
 and $\int\rho(y) \, dy=1$, and some smooth cut-off function $\chi$ with
 $\mathbf 1_{\mathcal U_{\delta^2/4}}\leq 1- \chi\leq \mathbf 1_{\mathcal U_{\delta^2/2}}$ and $\|\na\chi\|_{\infty}\leq 8/\delta^2$, where $\mathcal U_{\delta}:=\{z\in\mathbb{C}: \dist(z, \gamma_c([a,b]))<\delta\}$. Note that $H_{\alpha}$ agrees with $\widetilde H$ in $\mathcal U_{\delta^2/4}$ and is smooth in $\lt(\mathbb{C}\setminus\overline{\mathcal U_{\delta^2/2}}\rt)\bigcup \varphi(R_{\delta/2}^{\ep/2}\setminus R_{\delta/2}^{\delta^2/8})$ for $\alpha$ small enough. In particular, $H_{\alpha}$ agrees with $H$ on $\gamma_c([a-\frac{3\delta^2}{16}, b+\frac{3\delta^2}{16}])$. Further we have
 \begin{align*}
 \lt|H_{\alpha}(z)-H_{\alpha}(z')\rt|&\leq \int_{B_1}|\widetilde H(z+\alpha\chi(z)y)-\widetilde H(z'+\alpha\chi(z')y)|\,\rho(y)\,dy\\
 &\leq \ti k\lt(1+\frac{8\alpha}{\delta^2}\rt)\int_{B_1}|z-z'|\,\rho(y)\,dy= \ti k\lt(1+\frac{8\alpha}{\delta^2}\rt)|z-z'|,
 \end{align*}
so $H_{\alpha}$ is $k_{\alpha}$-Lipschitz for $k_{\alpha}=\ti k\lt(1+\frac{8\alpha}{\delta^2}\rt)<1$ for $\alpha$ sufficiently small.

Finally we use the above $H_{\alpha}$ to construct the closed curve $\Gamma$ containing $\mathcal J$. To that end, we claim that $\gamma_c([a-\e/3,b+\e/3])$ can be extended to a closed $C^2$ curve in $\C$ parametrized by $\tilde\gamma_c \in C^2(\R/L\Z;\C)$ for some $L>b-a+2\e/3$. 
This can be obtained as a consequence of Jordan's theorem stating that the complement of a Jordan arc in the plane is connected, 
but we sketch here a simpler proof in our $C^2$ context. Recall that $\varphi$ is a $C^1$ diffeomorphism between $R_{\delta}\cup R_{\delta}^\ep$ and $\varphi(R_{\delta}\cup R_{\delta}^\ep)$. 
We can find an open neighborhood $\mathcal V$ of $R_{\delta/2}\cup R_{\delta/2}^{2\ep/3}$ with $\mathcal V\subset\subset R_{\delta}\cup R_{\delta}^\ep$, and extend the straight segment $[a-\ep/3, b+\ep/3]$ 
into $\mathcal V\setminus \lt(R_{\delta^2}\cup R_{\delta^2}^{\ep/3}\rt)$ to form a smooth closed curve and apply $\varphi$ to obtain a $C^1$ closed curve inside $\varphi(\mathcal V)$ extending $\gamma_c([a-\ep/3, b+\ep/3])$. 
Finally smoothen that curve outside $[a-\ep/3, b+\ep/3]$ 
(using the same method we used to smoothen $\wt{H}$ via a cut-off function of the form of $\chi$) to turn it into a $C^2$ closed curve $\ti \gamma_c$ extending $\gamma_c([a-\ep/3, b+\ep/3])$. 

We are now equipped with $\tilde\gamma_c \in C^2(\R/L\Z;\C)$ injective such that $\tilde\gamma_c=\gamma_c$ on $[a-\ep/3,b+\ep/3]$.
Then we define $\tilde\gamma_a=H_{\alpha}\circ\ti \gamma_c$. Since $H_{\alpha}$ is smooth in  $\lt(\mathbb{C}\setminus\overline{\mathcal U_{\delta^2/2}}\rt)\bigcup \varphi(R_{\delta/2}^{\ep/2}\setminus R_{\delta/2}^{\delta^2/8})$, we know $\ti \gamma_a$ is $C^2$ outside the interval $[a-\frac{\delta^2}{8}, b+\frac{\delta^2}{8}]$. Further, since $H_{\alpha}$ agrees with $H$ on $\gamma_c([a-\frac{3\delta^2}{16}, b+\frac{3\delta^2}{16}])$,  it follows that $\ti \gamma_a=H_{\alpha}\circ\ti \gamma_c = H\circ\gamma_c=\gamma_a\in C^2([a-\frac{3\delta^2}{16}, b+\frac{3\delta^2}{16}])$. Thus, we deduce that $\ti\gamma_a\in C^2(\R/L\Z;\C)$. Setting
\begin{align*}
	\tilde\gamma =[\tilde\gamma_c]_c +[\tilde\gamma_a]_{a},
\end{align*}
it follows that $\Gamma=\tilde\gamma(\R/L\Z)$ is a closed $C^2$ elliptic curve containing $\mathcal J$.
The ellipticity follows  from the fact that $H_{\alpha}$ is $k_{\alpha}$-Lipschitz with $k_{\alpha}\in (0,1)$.
\end{proof}

\end{appendices}

\bibliographystyle{alphaa}
\bibliography{nonelliptic}

\end{document}